\documentclass[11pt]{article}
\usepackage{epsfig,amsmath,amssymb,graphics,verbatim,amsfonts,subfigure,psfrag}
\usepackage[letterpaper,margin=0.85in]{geometry}
\usepackage{xcolor}
\usepackage{amsthm}
\usepackage{enumitem}
\usepackage{overpic} 
\usepackage{hyperref}
\usepackage[toc,page]{appendix}

\newtheorem{thm}{Theorem}[section]

\newtheorem{lem}[thm]{Lemma}

\theoremstyle{remark}
\newtheorem{rem}[thm]{Remark}
\theoremstyle{definition}
\newtheorem{defn}[thm]{Definition}
\newtheorem{ass}[thm]{Assumptions}

\newcommand{\be}{\begin{equation}}
\newcommand{\ee}{\end{equation}}
\newcommand{\bea}{\begin{eqnarray}}
\newcommand{\eea}{\end{eqnarray}}
\newcommand{\beann}{\begin{eqnarray*}}
\newcommand{\eeann}{\end{eqnarray*}}
\newcommand{\benn}{\begin{equation*}}
\newcommand{\eenn}{\end{equation*}}

% Calligraphic letters
\newcommand{\cA}{{\mathcal A}}  % calligraphic A
\newcommand{\cB}{{\mathcal B}}  % calligraphic B
  % calligraphic C
\newcommand{\cD}{{\mathcal D}}  % calligraphic D
  % calligraphic E
  % calligraphic F
  % calligraphic G
\newcommand{\cH}{{\mathcal H}}  % calligraphic H
  % calligraphic I
  % calligraphic J
\newcommand{\cK}{{\mathcal K}}  % calligraphic K
  % calligraphic L
  % calligraphic M
  % calligraphic N
  % calligraphic O
  % calligraphic P
  % calligraphic Q
  % calligraphic R
  % calligraphic S
\newcommand{\cT}{{\mathcal T}}  % calligraphic T
  % calligraphic U
\newcommand{\cV}{{\mathcal V}}  % calligraphic V
  % calligraphic W
  % calligraphic X
  % calligraphic Y
  % calligraphic Z

% text letters in mathmode

\def\txtd{{\textnormal{d}}}

\numberwithin{equation}{section}

%% Notes %%%%%%%%%%%%%%%%%%%%%

%% Shortcuts %%%%%%%%%%%%%%%%%

\newcommand{\e}[1]{\exp\left(#1\right)}

\def\R{\mathbb{R}}

\def\I{\infty}
%\citestyle{round}
%%%%%%%%%%%%%%%%%%%%%%%%%%%%%%
%%%%%%%%%%%%%%%%%%%%%%%%%%%%%%
%%%%%%%%%%%%%%%%%%%%%%%%%%%%%%
%%%%%%%%%%%%%%%%%%%%%%%%%%%%%%
\begin{document}
%%%%%%%%%%%%%%%%%%%%%%%%%%%%%%
%%%%%%%%%%%%%%%%%%%%%%%%%%%%%%
%%%%%%%%%%%%%%%%%%%%%%%%%%%%%%
%%%%%%%%%%%%%%%%%%%%%%%%%%%%%%
\title{Random Attractors for Stochastic Partly \linebreak Dissipative Systems}
\author{Christian Kuehn, Alexandra Neam\c tu, Anne Pein}
\maketitle
\abstract{We prove the existence of a global random attractor for a certain class of stochastic partly dissipative systems. These systems consist of a partial (PDE) and an ordinary differential equation (ODE), where both equations are coupled and perturbed by additive white noise. The deterministic counterpart of such systems and their long-time behaviour have already been considered but there is no theory that deals with the stochastic version of partly dissipative systems in their full generality. We also provide several examples for the application of the theory.}
%%%%%%%%%%%%%%%%%%%%%%%%%%%%%%%%%%%%%%%%%%%%%%%%%%%%%%%%%%%%%%%%%%%%%%%%%%%%%%%%%%%%%%%%%%%%%%%%%
\section{Introduction}
In this work, we study classes of stochastic partial differential equations (SPDEs), which are 
part of the general partly dissipative system
\begin{equation}
\label{eq:SPDEintro}
\begin{array}{rcl}
\txtd u_1&=&(d\Delta u_1-h(x,u_1)-f(x,u_1,u_2))~\txtd t+B_1(x,u_1,u_2)~\txtd W_1,\\
\txtd u_2&=&(-\sigma(x)u_2-g(x,u_1,u_2)) ~\txtd t+B_2(x,u_1,u_2)~\txtd W_2,
\end{array}
\end{equation}
where $W_{1,2}$ are cylindrical Wiener processes, the 
$\sigma,f,g,h$ are given functions, $B_{1,2}$ are operator-valued, $\Delta$ is the 
Laplace operator, $d>0$ is a parameter, the equation is posed on a bounded open domain 
$D\subset \mathbb{R}^n$, $u_{1,2}=u_{1,2}(x,t)$ are the unknowns for $(x,t)\in 
D\times [0,T_{\max})$, and $T_{\max}$ is the maximal existence time. The term \textit{partly 
dissipative} highlights the fact that only the first component contains the regularizing 
Laplace operator. In this work we analyse the case of additive noise and a certain coupling, 
more precisely,  
\begin{equation}
\label{eq:basecase}
B_1(x,u_1,u_2)=B_1, \ \  B_2(x,u_1,u_2)=B_2,\ \  g(x,u_1,u_2)=g(x,u_1),
\end{equation}
where $B_{1,2}$ are bounded linear operators. A deterministic version of such a system has 
been analysed by Marion~\cite{Mar}. We are going to use certain assumptions for the 
reaction terms, which are similar to those used in~\cite{Mar}. 
The precise technical setting of our work starts in Section~\ref{sec:stochastics}. 

The goal of this work is to provide a general theory for stochastic partly dissipative 
systems and to analyse the long-time behaviour of the solution using the random dynamical 
systems approach. To this aim, we first show that the solution of our system exists 
globally-in-time, i.e.~one can take $T_{\max}=+\I$ above. Then we prove the existence of 
a pullback attractor. To our best knowledge the well-posedness and asymptotic behaviour for 
such systems (and for other coupled SPDEs and SODEs) has only been explored for special cases, 
i.e.~mainly for the FitzHugh Nagumo equation, see~\cite{BonaMast,SaSt} for solution theory 
and~\cite{Adi, Wang1, Wang2, Li} for long-time behaviour/attractor theory. Here we develop 
a much more general theory of stochastic partly dissipative systems, motivated by the numerous 
applications in the natural sciences such as the the cubic-quintic Allen-Cahn equation~\cite{KuehnEllipticCont} in elasticity. Moreover, 
unlike several previous works mentioned above, we deal with infinite-dimensional noise that 
satisfies certain regularity assumptions. These combined with the restrictions on the 
reaction terms allow us to compute sharp a-priori bounds of the solution, which are used to 
construct a random absorbing set. Even once the absorbing set has been constructed, we emphasize 
that we cannot directly apply compact embedding results to obtain the existence of an attractor. 
This issue arises due to the absence of the regularizing effect of the Laplacian in the second 
component. To overcome this obstacle, we introduce an appropriate splitting of the solution in two 
components: a regular one, and one that asymptotically tends to zero. This splitting technique goes (at least) back to Temam \cite{Temam} and it has also been 
used in the context of deterministic partly dissipative systems ~\cite{Mar} and for a 
stochastic FitzHugh-Nagumo equation with linear multiplicative noise~\cite{Wang,Zhou}. The necessary 
additional technical steps for our setting are provided in Section~\ref{subsec:compact}. Using
the a-priori bounds, we establish the existence of a pullback attractor~\cite{CraFla,Schmal,Schmal2000,Flan}; 
which has been studied in several contexts to capture the long-time behaviour of stochastic 
(partial) differential equations, see for instance~\cite{CrDeFl,GeLiRo,Arn,Cara,Deb} and the 
references therein. In the stochastic case pullback attractors are random invariant compact 
sets of phase space that are invariant with respect to the dynamics. They can be viewed as the 
generalization of non-autonomous attractors for deterministic systems. In the context of coupled 
SPDEs and SODEs, to our best knowledge, only random attractors for the stochastic FitzHugh-Nagumo 
equation were treated under various assumptions of the reaction and noise terms: finite-dimensional 
additive noise on bounded and unbounded domains~\cite{Wang, Wang2} and for (non-autonomous) 
FitzHugh-Nagumo equation driven by linear multiplicative noise~\cite{Adi1, Li, Zhou}. Here we 
provide a general random attractor theory for stochastic partly dissipative systems perturbed by 
infinite-dimensional additive noise, which goes beyond the FitzHugh-Nagumo system. To this aim we 
have to employ more general techniques than those used in the references specified above. Furthermore, we emphasize that  other dynamical aspects for similar systems have been investigated, e.g. inertial manifolds and master-slave synchronization in reference \cite{Chuesov}.  

We also mention that numerous extensions of our work are imaginable. Evidently the fully dissipative
case is easier from the viewpoint of attractor theory. Hence, our results can be extended in a 
straightforward way to the case when both components of the SPDE  contain a Laplacian. Systems with
more than two components but with similar assumptions are likely just going to
pose notational problems rather than intrinsic ones. From the point of view of applications it would be 
meaningful to incorporate non-linear couplings between the PDE and ODE parts. For example, this would allow 
us to use this theory to analyse various systems derived in chemical kinetics from mass-action laws. However, 
more complicated non-linear couplings are likely to be far more challenging. Moreover, 
one could also develop a general framework which allows one to deal with other random influences, 
e.g.~multiplicative noise, or more general Gaussian processes than standard trace-class Wiener processes. 
Furthermore, it would be interesting to investigate several dynamical aspects of partly dissipative 
SPDEs such as invariant manifolds or patterns. Naturally, one could also aim to derive upper bounds for 
the Hausdorff dimension of the random attractor and compare them to the deterministic result 
given in~\cite{Mar}.

This paper is structured as follows: Section \ref{sec:stochastics} contains all the preliminaries. 
More precisely, in Section \ref{subsec:basics} we define the system that we are going to analyse 
and state all the required assumptions. Subsequently, in Section \ref{subsec:mild}, we clarify the 
notion of solution that we are interested in. The main contribution of this work is given in 
Section \ref{sec:random}. Firstly, some preliminary definitions and results about random attractor 
theory are summarized in Section \ref{subsec:pre}. Secondly, we derive the random dynamical system 
associated to our SPDE system in Section \ref{subsec:rds}. Thirdly, we prove 
the existence of a bounded absorbing set for the random dynamical system in Section \ref{subsec:bound}. 
Lastly, in Section \ref{subsec:compact} it is shown that one can indeed find a compact absorbing set 
implying the existence of a random attractor. In Section~\ref{sec:applications} we illustrate the 
theory by several examples arising from applications.\\

\paragraph{Notation:} Before we start, we define/recall some standard notations that we will use within 
this work. When working with vectors we use $(\cdot)^\top$ to denote the transpose while $|\cdot|$ 
denotes the Euclidean norm. In a metric space $(M,d)$ we denote a ball of radius $r>0$ centred in 
the origin by $$B(r)=\{x\in M|d(x,0)\leq r\}.$$ 
We write $\text{Id}$ for the identity operator/matrix. $L(U,H)$ denotes the space of bounded linear 
operators from $U$ to $H$. $O^*$ denotes the adjoint operator of a bounded linear 
operator $O$. We let $D\subset \mathbb R^n$ always be bounded, open, and with regular boundary, where 
$n\in \mathbb N$. $L^p(D)$, $p\geq 1$, denotes the usual Lebesgue space with norm $\|\cdot\|_p$. 
Furthermore, $\langle\cdot,\cdot \rangle$ denotes the associated scalar-product in $L^2(D)$. 
$C^p(D)$, $p\in \mathbb N\cup\{0,\infty\}$, denotes the space of all continuous functions that 
have continuous first $p$ derivatives. Lastly, for  $k\in \mathbb N$, $1\leq p\leq\infty$ we consider 
the Sobolev space of order $k$ as 
$$\displaystyle W^{k,p}(D)=\left\{u\in L^{p}(D ):D^{\alpha }u\in L^{p}(D)\,\,\forall 
|\alpha |\leqslant k\right\},$$
with multi-index $\alpha$, where the norm is given by  
$$ \displaystyle \|u\|_{W^{k,p}(D )}:={\begin{cases}\left(\sum _{|\alpha |\leqslant k}
\left\|D^{\alpha }u\right\|_{L^{p}(D)}^{p}\right)^{\frac {1}{p}}&1\leqslant p<\infty ;
\\\max _{|\alpha |\leqslant k}\left\|D^{\alpha }u\right\|_{L^{\infty }(D )}&p=\infty .\end{cases}}$$ 
The Sobolev space $W^{k,p}(D)$ is a Banach space. $H_0^k(D)$ denotes the space of functions 
in $H^k(D)=W^{k,2}(D)$ that vanish at the boundary (in the sense of traces).

%%%%%%%%%%%%%%%%%%%%%%%%%%%%%%%%%%%%%%%%%%%%%%%%%%%%%%%%%%%%%%%%%%%%%%%%%%%%%%%%%%%%%%%%%%%%%%%%%
\section{Stochastic partly dissipative systems}
\label{sec:stochastics}

%%%%%%%%%%%%%%%%%%%%%%%%%%%%%%%%%%%%%%%%%%%%%%%%%%%%%%%%%%%%%%%%%%%%%%%%%%%%%%%%%%%%%%%%%%%%%%%%%
\subsection{Basics}
\label{subsec:basics}
Let $D\subset\mathbb R^n$ be a bounded open set with regular boundary, set $H:=L^2(D)$ and let $U_1,U_2$ 
be two separable Hilbert spaces. We consider the following coupled, partly dissipative 
system with additive noise
\begin{align}
&\txtd u_1=(d\Delta u_1-h(x,u_1)-f(x,u_1,u_2))~\txtd t+B_1~\txtd W_1,\label{eqn:PDE}\\
&\txtd u_2=(-\sigma(x)u_2-g(x,u_1)) ~\txtd t+B_2~\txtd W_2,
\label{eqn:ODE}
\end{align}
where $u_{1,2}=u_{1,2}(x,t)$, $(x,t)\in D\times[0,T]$, $T>0$, $W_{1,2}$ are cylindrical 
Wiener processes on $U_1$ respectively $U_2$, and $\Delta$ is the Laplace operator. 
Furthermore, $B_1\in L(U_1,H)$, $B_2\in L(U_2,H)$ and $d>0$ is a parameter 
controlling the strength of the diffusion in the first component. The system is 
equipped with initial conditions 
\begin{equation}
\label{eqn:inicond}
u_1(x,0)=u_1^0(x), ~~~ u_2(x,0)=u_2^0(x),
\end{equation} 
and a Dirichlet boundary condition for the first component
\begin{equation}
\label{eqn:boundcond}
u_1(x,t)=0 ~~~\text{on }\partial D\times [0,T].
\end{equation}
We will denote by $A$ the realization of the Laplace operator with Dirichlet 
boundary conditions, more precisely we define the operator $A:\cD(A)\rightarrow L^2(D)$ 
as $Au=d\Delta u$ with domain $\cD(A):=H^2(D)\cap H_0^1(D)\subset L^2(D)$. Note that 
$A$ is a self-adjoint operator that possesses a complete orthonormal system of 
eigenfunctions $\{e_k\}_{k=1}^\infty$ of $L^2(D)$. Within this work we always assume 
that there exists $\kappa>0$ such that $|e_k(x)|^2<\kappa$ for $k\in \mathbb N$ and $x\in D$. 
This holds for example when $D=[0,\pi]^n$. For the deterministic reaction terms 
appearing in (\ref{eqn:PDE})-(\ref{eqn:ODE}) we assume that:

\begin{ass}\label{ass:1}(Reaction terms)
\begin{enumerate}[label=(\arabic*),ref=(\arabic*)]
\item\label{ass:h} $h\in C^2(\mathbb R^n\times \mathbb R)$ and there exist $\delta_1,\delta_2,
\delta_3>0$, $p>2$ such that 
\begin{equation}
\label{eqn:condh}
\delta_1|u_1|^p-\delta_3\leq h(x,u_1)u_1\leq \delta_2|u_1|^p+\delta_3.
\end{equation}
\item\label{ass:f} $f\in C^2(\mathbb R^n\times \mathbb R\times \mathbb R)$ and there exist $\delta_4>0$ and 
$0<p_1<p-1$ such that 
\begin{equation}
\label{eqn:condf}
|f(x,u_1,u_2)|\leq \delta_4 (1+|u_1|^{p_1}+|u_2|).
\end{equation}
\item\label{ass:sigma} $\sigma\in C^2(\mathbb R^n)$ and there exist 
$\delta,\tilde \delta>0$ such that 
\begin{equation}
\label{eqn:condsi}
\delta\leq \sigma(x)\leq \tilde \delta.
\end{equation}
\item\label{ass:g} $g\in C^2(\mathbb R^n\times \mathbb R)$ and there exists $\delta_5>0$ such that 
\begin{equation}\label{eqn:condg}
|g_u(x,u_1)|\leq \delta_5,~~ |g_{x_i}(x,u_1)|\leq \delta_5(1+|u_1|),~~~i=1,\ldots,n.
\end{equation}
\end{enumerate}
\end{ass}
In particular, Assumptions \ref{ass:1} \ref{ass:h} and \ref{ass:g} imply that there exist 
$\delta_7,\delta_8>0$ such that
\begin{align}
|g(x,\xi)|&\leq \delta_7(1+|\xi|),~~~~~~~ \text{for all } 
\xi \in \mathbb R, ~x\in D,\label{eqn:condgnew}\\
|h(x,\xi)|&\leq \delta_8(1+|\xi|^{p-1}),~~~~\text{for all }
\xi \in \mathbb R, ~x\in D.\label{eqn:condhnew}
\end{align}
The Assumptions \ref{ass:1}\ref{ass:h}-\ref{ass:g} are identical to those given in~\cite{Mar}, 
except that in the deterministic case only a lower bound on $\sigma$ was assumed.

We always consider an underlying filtered probability space denoted as
$(\Omega,\mathcal F,(\mathcal F_t)_{t\geq 0},\mathbb P)$ that will be specified later on. 
In order to guarantee certain regularity properties of the noise terms, we make the following 
additional assumptions:
\begin{ass}\label{ass:2}(Noise)
\begin{enumerate}[label=(\arabic*),ref=(\arabic*)]
\item\label{ass:B2} We assume that $B_2:U_2\rightarrow H$ is a Hilbert-Schmidt operator. 
In particular, this implies that $Q_2:=B_2B_2^*$ is a trace class operator and $B_2W_2$ 
is a $Q_2$-Wiener process. 
\item\label{ass:B1} We assume that $B_1\in L(U_1,H)$ and that the operator $Q_t$ defined 
by 
\benn
Q_tu=\int_0^t\e{sA}Q_1\e{sA^*}u~\txtd s, ~~~u\in H, t\geq 0,
\eenn
where $Q_1:=B_1B_1^*$, is of trace class. Hence, $B_1W_1$ is a  $Q_1$-Wiener process as well. 
\item\label{ass:basis} Let $U_1=H$. There exists an orthonormal basis $\{e_k\}_{k=1}^\I$ of $H$ 
and sequences $\{\lambda_k\}_{k=1}^\I$ and $\{\delta_k\}_{k=1}^\I$ such that 
\benn
A e_k=-\lambda_ke_k,\qquad Q_1e_k=\delta_ke_k,~~k\in \mathbb N.
\eenn
Furthermore, we assume that there exists $\alpha\in\left(0,\frac{1}{2}\right)$ such that 
\benn
\sum_{k=1}^\infty \delta_k\lambda_k^{2\alpha+1}<\infty.
\eenn
%\item We assume that there exists $\alpha \in \left(0,\frac{1}{2}\right)$ such that 
%\begin{equation*}
%\sum_{k=1}^\infty  \delta_k \lambda_k^{2\alpha+1}<\infty.
%\end{equation*}
\end{enumerate}
\end{ass}
Assumptions \ref{ass:2} guarantee that the stochastic convolution introduced 
below is a well-defined process with sufficient regularity properties, see Lemma \ref{lem:temp} and Lemma  \ref{lem:firstcomp}. 
As an example, one could choose $B_1=(-A)^{-\gamma/2}$ with $\gamma>\frac{n}{2}-1$ 
to ensure that Assumptions \ref{ass:2} \ref{ass:B1}-\ref{ass:basis} hold for $\alpha$ with $2\alpha <
\gamma-\frac{n}{2}+1$, see \cite[Chapter 4]{Prato}.

Let us now formulate problem (\ref{eqn:PDE})-(\ref{eqn:ODE}) as an abstract Cauchy problem. 
We define the following space
\begin{equation*}
\mathbb H:=L^2(D)\times L^2(D),
\end{equation*}
with norm $\|(u_1,u_2)^\top\|_{\mathbb H}^2=\|u_1\|_{2}^2+\|u_2\|_{2}^2$ this becomes 
a separable Hilbert space. $\langle\cdot,\cdot\rangle_\mathbb H$ denotes the corresponding 
scalar product. Furthermore, we let
\begin{equation*}
\mathbb V:=H_0^1(D)\times L^2(D),
\end{equation*}
with norm $\|(u_1,u_2)^\top\|_\mathbb V^2=\|u_1\|_{H^1(D)}^2+\|u_2\|_{2}^2$. We define the 
following linear operator 
\begin{equation*}
\mathbf A:=\begin{pmatrix}A&0\\0&-\sigma(x)\end{pmatrix},
\end{equation*} 
where $\mathbf A:\cD(\mathbf A)\subset \mathbb H\rightarrow \mathbb H$ with $\cD(\mathbf A)
=\cD(A)\times L^2(D)$. Since all the reaction terms are twice continuously differentiable 
they obey in particular the Carath\'eodory conditions~\cite{Zeid2B}. Thus, the corresponding 
Nemytskii operator is defined by 
\begin{align*}
\mathbf F((u_1,u_2)^\top)(x)&:=\begin{pmatrix} F_1((u_1,u_2)^\top)(x)\\F_2((u_1,u_2)^\top)(x)\end{pmatrix},\\
&:=\begin{pmatrix}-h(x,u_1(x))-f(x,u_1(x),u_2(x))\\ -g(x,u_1(x))\end{pmatrix},
\end{align*}
where $\mathbf F:\cD(\mathbf F)\subset \mathbb H\rightarrow \mathbb H$ and 
$\cD(\mathbf F):=\mathbb H$. By setting 
\benn
\mathbf W:=\begin{pmatrix}W_1\\W_2\end{pmatrix}, \qquad 
\mathbf B:=\begin{pmatrix}B_1\\B_2\end{pmatrix},\quad \text{and}\quad 
u:=\begin{pmatrix}u_1\\u_2\end{pmatrix} 
\eenn
we can rewrite the system (\ref{eqn:PDE})-(\ref{eqn:ODE}) as an abstract Cauchy 
problem on the space $\mathbb H$
\begin{equation}
\label{eqn:abstract}
\txtd u=(\mathbf A u+\mathbf F(u))~\txtd t+\mathbf B ~\txtd \mathbf W,
\end{equation}
with initial condition 
\begin{equation}
\label{eqn:abstractini}
u(0)=u^0:=\begin{pmatrix}u_1^0\\u_2^0\end{pmatrix}. 
\end{equation} 

%%%%%%%%%%%%%%%%%%%%%%%%%%%%%%%%%%%%%%%%%%%%%%%%%%%%%%%%%%%%%%%%%%%%%%%%%%%%%%%%%%%%%%%%%%%%%%%%%
\subsection{Mild solutions and stochastic convolution}
\label{subsec:mild}
We are interested in the concept of mild solutions to SPDEs. First of all, let us 
note the following. We have 
\begin{equation*}
\mathbf A= \underbrace{\begin{pmatrix}A&0\\0&0\end{pmatrix}}_{=:A_1}+
\underbrace{\begin{pmatrix}0&0\\0&-\sigma(x)\end{pmatrix}}_{=:A_2}.
\end{equation*}
It is well known that $A_1$ generates an analytic semigroup on $\mathbb H$ and 
$A_2$ is a bounded multiplication operator on $\mathbb H$. Hence, $\mathbf A$ is 
the generator of an analytic semigroup $\{\e{t\mathbf A}\}_{t\geq 0}$ on $\mathbb H$ 
as well, see \cite[Chapter 3, Theorem 2.1]{Pazy}. Also note that $A$ generates an 
analytic semigroup $\{\e{tA}\}_{t\geq 0}$ on $L^p(D)$ for every $p\geq 1$. In particular, 
we have for $u\in L^p(D)$ that for every $\alpha\geq 0$ there exists a constant 
$C_\alpha>0$ such that 
\begin{equation*}
\|(-A)^\alpha \e{tA}u\|_p\leq C_\alpha t^{-\alpha}\e{a t}\|u\|_p, ~ ~ ~ \text{ for all }t>0,
\end{equation*}
where $a>0$, see for instance \cite[Theorem 37.5]{SellYou}. The domain $\cD((-A)^\alpha)$ 
can be identified with the Sobolev space $W^{2\alpha,p}(D)$ and thus we have in our setting 
for $t>0$
\begin{equation}\label{eqn:ansem}
\|\e{tA}u\|_{W^{\alpha,p}(D)}\leq C_\alpha t^{-\alpha/2}\e{a t} \|u\|_p.
\end{equation} 

\begin{rem}
\label{rem:Mariongit}
Omitting the additive noise term in equation \eqref{eqn:abstract}, we are in the 
deterministic setting of \cite{Mar}. From there the existence of a global-in-time 
solution $(u_1,u_2)\in C([0,\infty),\mathbb H)$ for every initial condition 
$u^0\in \mathbb H$ already follows. 
\end{rem}

Let us now return to the stochastic Cauchy problem \eqref{eqn:abstract}-\eqref{eqn:abstractini}. 
We define

\begin{defn}(Stochastic convolution) The stochastic process defined as 
\begin{equation*}
W_\mathbf A(t):=\begin{pmatrix}W_\mathbf A^1(t)\\W_\mathbf A^2(t)\end{pmatrix}
:=\int_0^t\e{(t-s)\mathbf A}\mathbf B
~\txtd\mathbf W(s),
\end{equation*}
is called stochastic convolution. 
\end{defn} 

More precisely, we have (see \cite[Proposition 3.1]{Nagel})
\begin{align*}
W_\mathbf A(t)&=\int_0^t\begin{pmatrix}\e{(t-s)A}&0\\0&\e{-(t-s)\sigma(x)}\end{pmatrix}
\begin{pmatrix}B_1\\B_2\end{pmatrix}~\txtd\mathbf W(s)\\
&=\begin{pmatrix}\int_0^t\e{(t-s)A}B_1~\txtd W_1(s)\\\int_0^t\e{-(t-s)\sigma(x)}
B_2~\txtd W_2(s)\end{pmatrix}.
\end{align*}
This is a well-defined $\mathbb H$-valued Gaussian process. Furthermore, 
Assumptions \ref{ass:2} (1) and (2) ensure that $W_\mathbf A(t)$ is mean-square 
continuous and $\mathcal F_t$-measurable, see \cite{PratoZab}. 

\begin{rem}
\label{rem:moments}
As $W_{\mathbf A}$ is a Gaussian process, we can bound all its higher-order 
moments, i.e.~for $p\geq 1$ we have 
\begin{equation}
\sup_{t\in[0,T]}\mathbb E\|W_\mathbf A(t)\|_\mathbb H^p<\infty.
\end{equation}
This follows from the Kahane-Khintchine inequality, see~\cite[Theorem 3.12]{Van}.
\end{rem}

\begin{defn}(Mild solution) A mean-square continuous, $\mathcal F_t$-measurable 
$\mathbb H$-valued process $u(t)$, $t\in [0,T]$ is said to be a mild solution 
to (\ref{eqn:abstract})-(\ref{eqn:abstractini}) on $[0,T]$
if $\mathbb P$-almost surely we have for $t\in [0,T]$
\begin{equation}
\label{eqn:varconst}
u(t)=\e{t\mathbf A}u^0+\int_0^t\e{(t-s)\mathbf A}\mathbf F(u(s))
~\txtd s+W_\mathbf A(t).
\end{equation}
\end{defn}

Under Assumptions \ref{ass:1} and \ref{ass:2} (1)-(2) a mild solution exists 
locally-in-time in 
$$L^2(\Omega;C([0,T];\mathbb H))\cap L^2(\Omega;L^2([0,T];\mathbb V)),$$ 
for some $T>0$, see \cite[Theorem 7.7]{PratoZab}. Hence, local in time existence 
for our problem is guaranteed by the classical SPDE theory. 

%%%%%%%%%%%%%%%%%%%%%%%%%%%%%%%%%%%%%%%%%%%%%%%%%%%%%%%%%%%%%%%%%%%%%%%%%%%%%%%%%%%%%%%%%%%%%%%%%
\section{Random attractor}
\label{sec:random}

%%%%%%%%%%%%%%%%%%%%%%%%%%%%%%%%%%%%%%%%%%%%%%%%%%%%%%%%%%%%%%%%%%%%%%%%%%%%%%%%%%%%%%%%%%%%%%%%%
\subsection{Preliminaries}
\label{subsec:pre}
We now recall some basic definitions related to random attractors. For more information the reader 
is referred to the sources given in the introduction.

\begin{defn}(Metric dynamical system) Let $(\Omega, \mathcal F,\mathbb P)$ be a probability space
and let $\theta=\{\theta_t:\Omega\rightarrow \Omega\}_{t\in\R}$ be a family of $\mathbb P$-preserving 
transformations (i.e. $\theta_t\mathbb P=\mathbb P$ for $t\in \mathbb R$), which satisfy for $t,s\in\R$ that 
\begin{itemize}
\item[(1)] $(t,\omega)\mapsto \theta_t\omega$ is measurable,
\item[(2)] $\theta_0=\text{Id}$,
\item[(3)] $\theta_{t+s}=\theta_t\circ\theta_s$. 
\end{itemize} 
Then $(\Omega,\mathcal F, \mathbb P,\theta)$ is called a metric 
dynamical system. 
\end{defn}
The metric dynamical system describes the dynamics of the noise. 
\begin{defn}(Random dynamical system) Let $(\cV,\|\cdot\|)$ be a separable Banach space. 
A random dynamical system (RDS) with time domain $\R_+$ on $(\cV,\|\cdot\|)$ over $\theta$ is 
a measurable map 
\benn
\varphi:\R_+\times \cV\times \Omega\rightarrow \cV; ~~~ (t,v,\omega)\mapsto \varphi(t,\omega)v
\eenn
such that $\varphi(0,\omega)=\text{Id}_{\cV}$ and 
\benn
\varphi(t+s,\omega)=\varphi(t,\theta_s\omega)\circ\varphi(s,\omega)
\eenn
for all $s,t\in \R_+$ and for all $\omega \in \Omega$. We say that $\varphi$ is a continuous or 
differentiable RDS if $v\mapsto \varphi(t,\omega)v$ is continuous or differentiable for all 
$t\in \R_+$ and every $\omega\in \Omega$. 
\end{defn}
We summarize some further definitions relevant for the theory of random attractors. 

\begin{defn}(Random set) A set-valued map $\cK:\Omega\rightarrow 2^\cV$ is said to be measurable 
if for all $v\in \cV$ the map $\omega\mapsto d(v,\cK(\omega))$ is measurable. Here, $d(\cA,\cB)
=\sup_{v\in \cA}\inf_{\tilde{v}\in \cB}\|v-\tilde{v}\|$ for $\cA,\cB\in 2^\cV$, $\cA,\cB\neq 
\emptyset$ and $d(v,\cB)=d(\{v\},\cB)$. A measurable set-valued map is called a random set. 
\end{defn}

\begin{defn}(Omega-limit set) For a random set $\cK$ we define the omega-limit set to be 
\benn
\Omega_\cK(\omega):=\bigcap_{T\geq 0}\overline{\bigcup_{t\geq T}\varphi(t,\theta_{-t}\omega)
\cK(\theta_{-t}\omega)}.
\eenn
$\Omega_\cK(\omega)$ is closed by definition.
\end{defn}

\begin{defn}(Attracting and absorbing set) Let $\cA,\cB$ be random sets and let $\varphi$ be a RDS.
\begin{itemize}
\item  $\cB$ is said to 
attract $\cA$ for the RDS $\varphi$, if
\benn 
d(\varphi(t,\theta_{-t}\omega)\cA(\theta_{-t}\omega),\cB(\omega))\rightarrow 0~~
\text{for }t\rightarrow \infty.
\eenn
\item $\cB$ is said to absorb $\cA$ for the RDS $\varphi$, if  there exists a (random) 
absorption time $t_\cA(\omega)$ such that for all $t\geq t_\cA(\omega)$ 
\benn
\varphi(t,\theta_{-t}\omega)\cA(\theta_{-t}\omega)\subset \cB(\omega).
\eenn
\item Let $\mathbf \cD$ be a collection of random sets (of non-empty subsets of $\cV$), 
which is closed with respect to set inclusion. A set $\cB\in \mathbf \cD$ is called 
$\mathbf \cD$-absorbing/$\cD$-attracting for the RDS $\varphi$, if $\cB$ absorbs/attracts 
all random sets in $\mathbf \cD$. 
\end{itemize}
\end{defn}

\begin{rem}
\label{rem:abs}
Throughout this work we use a convenient criterion to derive the existence of an 
absorbing set. Let $\cA$ be a random set. If for every $v\in \cA(\theta_{-t}\omega)$ 
we have 
\begin{equation}
\limsup_{t\rightarrow \infty} \|\varphi(t,\theta_{-t}\omega,v)\|\leq \rho(\omega),
\end{equation}  
where $\rho(\omega)>0$ for every $\omega \in \Omega$, then the ball centred in $0$ 
with radius $\rho(\omega)+\epsilon$ for a $\epsilon>0$, i.e. $\cB(\omega):=
B(0,\rho(\omega)+\epsilon)$, absorbs $\mathcal A$.
\end{rem}

\begin{defn}(Tempered set) A random set $\cA$ is called tempered provided for 
$\mathbb P$-a.e.~$\omega\in \Omega$ 
\benn
\lim_{t\rightarrow \infty} \e{-\beta t} d(\cA(\theta_{-t}\omega))=0~~~
\text{ for all }\beta>0,
\eenn
where $d(\cA)=\sup_{a\in\cA}\|a\|$.
We denote by $\mathcal T$ the set of all tempered 
subsets of $\cV$. 
\end{defn}

\begin{defn}(Tempered random variable) A random variable 
$X\in \mathbb R$ on $(\Omega,\mathcal F,\mathbb P,\theta)$ 
is called tempered, if there is a set of full $\mathbb P$-measure 
such that for all $\omega$ in this set we have
\begin{equation} \label{eqn:temrv}
\lim_{t\rightarrow \pm \infty}\frac{\log\left|X(\theta_t\omega)\right|}{|t|}=0.
\end{equation}
\end{defn}

Hence a random variable $X$ is tempered when the stationary random process 
$X(\theta_t\omega)$ grows sub-exponentially.  

\begin{rem}
\label{rem:temp}
A sufficient condition that a positive random variable $X$ is tempered is 
that (cf.~\cite[Proposition 4.1.3]{Arn})
\begin{equation}
\mathbb E\left(\sup_{t\in[0,1]}X(\theta_t\omega)\right)<\infty.
\end{equation}
\end{rem}

If $\theta$ is an ergodic shift, then the only alternative to~\eqref{eqn:temrv} 
is 
\benn \label{eqn:temrv1}
\lim_{t\rightarrow \pm \infty}\frac{\log\left|X(\theta_t\omega)\right|}{|t|}=\infty,
\eenn
i.e., the random process $X(\theta_t\omega)$ either grows sub-exponentially or 
blows up at least exponentially. 

\begin{defn}(Random attractor) Suppose $\varphi$ is a RDS such that there exists a 
random compact set $\cA\in \mathcal T$ which satisfies for any $\omega \in \Omega$
\begin{itemize}
\item $\cA$ is invariant, i.e., $\varphi(t,\omega)\cA(\omega)=\cA(\theta_t\omega)$ 
for all $t\geq0$. 
\item $\cA$ is $\mathcal T$-attracting.
\end{itemize}
Then $\cA$ is said to be a $\mathcal T$-random attractor for the RDS. 
\end{defn}

\begin{thm} (\cite{CraFla},\cite{Schmal})
\label{thm:atthm}
Let $\varphi$ be a continuous RDS and assume there exists a compact random set 
$\cB \in \mathcal T$ that absorbs every $\cD\in \mathcal T$, i.e. $\mathcal B$ is 
$\mathcal T$-absorbing. Then there exists a unique $\mathcal T$-random attractor 
$\cA$, which is given by 
\begin{equation*}
\mathcal{A}(\omega)=\overline{\bigcup_{\mathcal D \in \cT} \Omega_\cD(\omega)}.
\end{equation*}
\end{thm}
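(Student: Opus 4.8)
The plan is to follow the now-standard route to abstract random attractor existence theorems (as in Crauel–Flandoli and Schmalfuss), establishing the attractor as the closure of the union of omega-limit sets and verifying each required property in turn. The key structural fact is that, once we have a compact $\mathcal{T}$-absorbing set $\mathcal{B}$, the candidate attractor is forced to be $\mathcal{A}(\omega)=\overline{\bigcup_{\mathcal{D}\in\mathcal{T}}\Omega_\mathcal{D}(\omega)}$, so the entire argument amounts to showing this set is (i) nonempty, compact, and tempered; (ii) invariant; (iii) $\mathcal{T}$-attracting; and (iv) unique. First I would observe that each omega-limit set $\Omega_\mathcal{D}(\omega)$ is contained in $\mathcal{B}(\omega)$ for $\mathbb{P}$-a.e.\ $\omega$: since $\mathcal{B}$ absorbs $\mathcal{D}$, for $t$ beyond the absorption time $t_\mathcal{D}(\omega)$ we have $\varphi(t,\theta_{-t}\omega)\mathcal{D}(\theta_{-t}\omega)\subset\mathcal{B}(\omega)$, and $\mathcal{B}(\omega)$ is closed, so the nested intersection defining $\Omega_\mathcal{D}(\omega)$ lies in $\mathcal{B}(\omega)$. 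This immediately gives compactness (a closed subset of the compact set $\mathcal{B}(\omega)$) and temperedness (a subset of the tempered set $\mathcal{B}\in\mathcal{T}$) of $\mathcal{A}(\omega)$.

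Next I would prove nonemptiness and attraction together. For attraction, I would take any $\mathcal{D}\in\mathcal{T}$ and argue by contradiction: if $d(\varphi(t,\theta_{-t}\omega)\mathcal{D}(\theta_{-t}\omega),\mathcal{A}(\omega))\not\to 0$, then there is a sequence $t_n\to\infty$ and points $v_n\in\mathcal{D}(\theta_{-t_n}\omega)$ whose images $\varphi(t_n,\theta_{-t_n}\omega)v_n$ stay a fixed distance from $\mathcal{A}(\omega)$. But for large $n$ these images lie in the compact set $\mathcal{B}(\omega)$, hence a subsequence converges to some limit point; by the definition of the omega-limit set this limit lies in $\Omega_\mathcal{D}(\omega)\subset\mathcal{A}(\omega)$, contradicting the separation. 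The same compactness argument shows each $\Omega_\mathcal{D}(\omega)$ is nonempty (the sets $\overline{\bigcup_{t\geq T}\varphi(t,\theta_{-t}\omega)\mathcal{D}(\theta_{-t}\omega)}$ form a decreasing family of nonempty compact subsets of $\mathcal{B}(\omega)$, so their intersection is nonempty by the finite intersection property), whence $\mathcal{A}(\omega)\neq\emptyset$.

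For invariance I would use the cocycle property $\varphi(t+s,\omega)=\varphi(t,\theta_s\omega)\circ\varphi(s,\omega)$ together with continuity of $\varphi$. The inclusion $\varphi(t,\omega)\Omega_\mathcal{D}(\omega)\subset\Omega_\mathcal{D}(\theta_t\omega)$ follows by pushing a convergent defining sequence through the continuous map $\varphi(t,\cdot)$ and rewriting via the cocycle identity; the reverse inclusion uses that any point of $\Omega_\mathcal{D}(\theta_t\omega)$ arises as a limit of trajectories that can be traced back a time $t$, the pre-images lying in the compact absorbing set so that a further subsequence converges to a preimage in $\Omega_\mathcal{D}(\omega)$. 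Taking the union over $\mathcal{D}$ and closures (using continuity of $\varphi(t,\omega)$ to commute it past the closure, since on the compact set $\mathcal{B}$ the map is uniformly continuous) yields $\varphi(t,\omega)\mathcal{A}(\omega)=\mathcal{A}(\theta_t\omega)$.

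Finally, uniqueness follows from a standard two-attractor comparison: if $\mathcal{A}'$ is another $\mathcal{T}$-random attractor, then since $\mathcal{A}'\in\mathcal{T}$ it is attracted by $\mathcal{A}$, and invariance of $\mathcal{A}'$ forces $\mathcal{A}'(\omega)\subset\mathcal{A}(\omega)$, and vice versa, giving equality. I also need measurability of $\omega\mapsto\mathcal{A}(\omega)$, which I would obtain from the measurability of the absorbing set and the omega-limit sets, invoking the relevant measurable-selection/countable-dense-set argument. \emph{The main obstacle} I anticipate is the reverse inclusion in the invariance step: producing preimages of omega-limit points requires a compactness-plus-diagonal argument that genuinely uses both the compactness of $\mathcal{B}(\omega)$ and the continuity of the RDS, and this is the one place where the hypotheses must be combined carefully rather than applied formulaically. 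Since this theorem is quoted from \cite{CraFla,Schmal}, however, I would in practice cite those references for the complete measurability and invariance details rather than reprove them in full.
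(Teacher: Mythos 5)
The paper does not prove this theorem at all: it is quoted directly from \cite{CraFla} and \cite{Schmal} and used as a black box, so there is no in-paper argument to compare against. Your sketch faithfully reproduces the standard Crauel--Flandoli/Schmalfuss proof (containment of the omega-limit sets in the compact absorbing set, compactness-by-contradiction for attraction and nonemptiness, cocycle-plus-compactness for the two invariance inclusions, and the two-attractor comparison for uniqueness), and it is correct in outline, with the genuinely delicate point --- measurability of $\omega\mapsto\mathcal{A}(\omega)$ given the uncountable union over $\mathcal{D}\in\mathcal{T}$ --- rightly flagged and deferred to the cited references.
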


We will use the above theorem to show the existence of a random attractor for the 
partly dissipative system at hand. 

%%%%%%%%%%%%%%%%%%%%%%%%%%%%%%%%%%%%%%%%%%%%%%%%%%%%%%%%%%%%%%%%%%%%%%%%%%%%%%%%%%%%%%%%%%%%%%%%
\subsection{Associated RDS}
\label{subsec:rds}
We will now define the RDS corresponding to~\eqref{eqn:abstract}-\eqref{eqn:abstractini}. 
We consider $\mathcal V=\mathbb H:=L^2(D)\times L^2(D)$ and $\cT$ is the set of all tempered 
subsets of $\mathbb H$.  In the sequel, we consider the fixed canonical probability space 
$(\Omega,\mathcal F,\mathbb P)$ corresponding to a two-sided Wiener process, more precisely
\begin{align*}
\Omega:=&\left\{\omega=(\omega_1,\omega_2): \omega_1,\omega_2 \in C(\mathbb R,L^2(D)), 
\omega(0)=0\right\},
\end{align*}
endowed with the compact-open topology.
The $\sigma$-algebra $\mathcal F$ is the Borel $\sigma$-algebra on 
$\Omega$ and $\mathbb P$ is the distribution of the trace class Wiener process 
$\tilde{W}(t):=(\tilde W_1(t),\tilde W_2(t))=(B_1W_1(t),B_2W_2(t))$, where we recall that $B_1$ and $B_2$ fulfil Assumptions \ref{ass:2}. We identify the 
elements of $\Omega$ with the paths of these Wiener processes, more precisely
\begin{equation}
\tilde W(t,\omega):=(\tilde W_1(t,\omega_{1}),\tilde W_2(t,\omega_{2}))=
(\omega_1(t),\omega_2(t))=:\omega(t), \mbox{ for } \omega\in\Omega.
\end{equation}
Furthermore, we introduce the Wiener shift, namely
\begin{equation}
\theta_t\omega(\cdot):=\omega(\cdot+t)-\omega(t), ~\mbox{for }\omega\in 
\Omega \mbox{ and }
t\in \mathbb R.
\end{equation}
Then $\theta:\mathbb R\times \Omega \rightarrow \Omega$ is a measure-preserving 
transformation on $\Omega$, i.e.~$\theta_{t}\mathbb{P}=\mathbb{P}$, for $t\in\mathbb{R}$. 
Furthermore, $\theta_0\omega(s)=\omega(s)-\omega(0)=\omega(s)$ and $\theta_{t+s}\omega(r)
=\omega(r+t+s)-\omega(t+s)=\theta_t(\omega(r+s)-\omega(s))=\theta_t(\theta_s\omega(r))$. 
Hence, $(\Omega,\mathcal F,\mathbb P,\theta)$ is a metric 
dynamical system. Next, we consider the following equations 
\begin{align}\label{eqn:eqnoi1}
\txtd z_1&=Az_1~\txtd t+~\txtd \omega_1,\\
\txtd z_2&= -\sigma(x)z_2~\txtd t + \txtd \omega_2. \label{eqn:eqnoi2}
\end{align}
The stationary solutions of \eqref{eqn:eqnoi1}-\eqref{eqn:eqnoi2} are given by
\begin{align*}
(t,\omega)\mapsto z_{1}(\theta_{t}\omega) \mbox { and } 
(t,\omega)\mapsto z_{2}(\theta_t\omega),
\end{align*}
where 
\begin{align*}
z_1(\theta_{t}\omega)&=\int\limits_{-\infty}^{t}e^{(t-s)A }~\txtd \omega_1(s)
=\int\limits_{-\infty}^{0}e^{-s A}~\txtd \theta_{t}\omega_{1}(s),
\\
z_2(\theta_{t}\omega)& = \int\limits_{-\infty}^{t} e^{-(t-s)\sigma(x)}~
\txtd \omega_{2}(s) =\int\limits_{-\infty}^{0} e^{s \sigma(x)}~\txtd \theta_{t}\omega_{2}(s).
\end{align*}
Here, we observe that for $t=0$ 
\begin{align*}
z_{1}(\omega)=\int\limits_{-\infty}^{0} e^{-s A}~\txtd \omega_{1}(s), ~ ~ ~
z_{2}(\omega) =\int\limits_{-\infty}^{0} e^{s\sigma(x)}~\txtd \omega_{2}(s).
\end{align*}
Now consider the Doss-Sussmann transformation $v(t)=u(t)-z(\theta_t \omega)$, 
where $v(t)=(v_1(t),v_2(t))^\top$, $z(\omega)=(z_1(\omega_{1}),z_2(\omega_{2}))^\top$ and 
$u(t)=(u_1(t),u_2(t))^\top$ is a solution to the problem 
(\ref{eqn:PDE})-(\ref{eqn:boundcond}). Then $v(t)$ satisfies 
\begin{align}
\label{eqn:eqv}
\frac{\txtd v}{\txtd t}&=\mathbf A v+\mathbf F(v+z(\theta_t\omega)).
\end{align}
More explicitly / or component-wise this reads as
\begin{align}
\frac{\txtd v_1(t)}{\txtd t}&=d\Delta v_1(t)-h(x,v_1(t)+z_1(\theta_t\omega))\nonumber\\
&\qquad
-f(x,v_1(t)+z_1(\theta_t\omega),v_2(t)+z_2(\theta_t\omega)),\label{eqn:RDS1}\\
\frac{\txtd v_2(t)}{\txtd t}&=-\sigma(x)v_2(t)-g(x,v_1(t)+z_1(\theta_t\omega))
\label{eqn:RDS2}.
\end{align}
In the equations above no stochastic differentials appear, hence 
they can be considered path-wise, i.e., for every $\omega$ instead just 
for $\mathbb P$-almost every $\omega$. For every $\omega$ \eqref{eqn:eqv} is a 
deterministic equation, where $z(\theta_t\omega)$ can be regarded as a 
time-continuous perturbation. In particular, \cite{ChepVish} guarantees that 
for all $v^0=(v_1^0,v_2^0)^\top\in \mathbb H$ there 
exists a solution $v(\cdot,\omega,v^0)\in C([0,\infty),\mathbb H)$  with  
$v_1(0,\omega,v_1^0)=v_1^0$, $v_2(0,\omega,v_2^0)=v_2^0$. Moreover, the mapping 
$\mathbb{H} \ni v_{0}\mapsto v(t,\omega,v_{0})\in\mathbb{H}$ is continuous. 
Now, let
\begin{align*}
u_1(t,\omega,u_1^0)&=v_1(t,\omega,u_1^0-z_1(\omega))+z_1(\theta_t\omega), \\
u_2(t,\omega,u_2^0)&=v_2(t,\omega,u_2^0-z_2(\omega))+z_2(\theta_t\omega).
\end{align*}
Then $u(t,\omega,u^0)=(u_1(t,\omega,u_1^0),u_2(t,\omega,u_2^0))^\top$ is a 
solution to (\ref{eqn:PDE})-(\ref{eqn:boundcond}). In particular, we can conclude at this point that (\ref{eqn:PDE})-(\ref{eqn:boundcond}) has a global-in-time 
solution which belongs to $C([0,\infty);\mathbb{H})$; see Remark~\ref{rem:Mariongit}.
We define the corresponding solution operator $\varphi:\mathbb R^+\times 
\Omega\times \mathbb H\rightarrow \mathbb H$ as
\begin{equation}
\label{eqn:ourRDS}
\varphi(t,\omega,(u_1^0,u_2^0)):=(u_1(t,\omega,u_1^0),u_2(t,\omega,u_2^0)),  
\end{equation}
for all $(t,\omega,(u_1^0,u_2^0))\in \mathbb R^{+}\times 
\Omega\times \mathbb H$. Now, $\varphi$ is a continuous RDS associated to our stochastic partly dissipative 
system. In particular, the cocycle property obviously follows from the mild formulation. 
In the following, we will prove the existence of a global random attractor for this 
RDS. Due to conjugacy, see~\cite{CraFla, Schmal} this gives us automatically a global 
random attractor for the stochastic partly dissipative 
system \eqref{eqn:PDE}-(\ref{eqn:boundcond}). 

%%%%%%%%%%%%%%%%%%%%%%%%%%%%%%%%%%%%%%%%%%%%%%%%%%%%%%%%%%%%%%%%%%%%%%%%%%%%%%%%%%%
\subsection{Bounded absorbing set}
\label{subsec:bound}

In the following we will prove the existence of a bounded absorbing set for the 
RDS \eqref{eqn:ourRDS}. In the calculations we will make use of some versions 
of certain classical deterministic results several times. Therefore, we recall 
these results here for completeness and as an aid to follow the calculations later
on.
 
\begin{lem}($\varepsilon$-Young inequality) 
For $x,y\in\mathbb R$, $\varepsilon>0$, $\tilde p, \tilde q>1$, 
$\frac{1}{\tilde p}+\frac{1}{\tilde q}=1$ we have
\begin{equation}
|xy|\leq \varepsilon |x|^{\tilde p}+\frac{(\tilde p 
\varepsilon)^{1-\tilde q}}{\tilde q}|y|^{\tilde q}.
\end{equation}
\end{lem}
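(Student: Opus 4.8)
The plan is to deduce this weighted bound from the classical Young inequality
\begin{equation*}
ab\leq \frac{a^{\tilde p}}{\tilde p}+\frac{b^{\tilde q}}{\tilde q},\qquad a,b\geq 0,
\end{equation*}
which itself follows from the convexity of the exponential: for $a,b>0$ one writes $a=\e{\frac{1}{\tilde p}\log a^{\tilde p}}$ and $b=\e{\frac{1}{\tilde q}\log b^{\tilde q}}$, and then $\frac{1}{\tilde p}+\frac{1}{\tilde q}=1$ together with Jensen's inequality for the convex function $\exp$ (applied to the two points $\log a^{\tilde p}$, $\log b^{\tilde q}$ with weights $\frac1{\tilde p},\frac1{\tilde q}$) yields the claim; the cases $a=0$ or $b=0$ are trivial. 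I would simply record this as a known fact rather than reprove it in detail.

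To introduce the parameter $\varepsilon$, I would apply the classical inequality not to $|x|,|y|$ directly but to the rescaled pair $a=\lambda|x|$, $b=\lambda^{-1}|y|$ for a free scaling factor $\lambda>0$, which leaves the product $ab=|xy|$ unchanged and produces
\begin{equation*}
|xy|\leq \frac{\lambda^{\tilde p}}{\tilde p}\,|x|^{\tilde p}+\frac{\lambda^{-\tilde q}}{\tilde q}\,|y|^{\tilde q}.
\end{equation*}
Choosing $\lambda$ so that the first coefficient equals $\varepsilon$, i.e.\ $\lambda^{\tilde p}=\tilde p\,\varepsilon$ and hence $\lambda=(\tilde p\varepsilon)^{1/\tilde p}$ (well defined since $\tilde p,\varepsilon>0$), reduces the statement to identifying the second coefficient.

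The only genuine bookkeeping step is to check that $\lambda^{-\tilde q}=(\tilde p\varepsilon)^{1-\tilde q}$, and this is exactly where the conjugacy relation is used: from $\frac{1}{\tilde p}+\frac{1}{\tilde q}=1$ one gets $\frac{\tilde q}{\tilde p}=\tilde q-1$, so that $\lambda^{-\tilde q}=(\tilde p\varepsilon)^{-\tilde q/\tilde p}=(\tilde p\varepsilon)^{1-\tilde q}$ and the coefficient of $|y|^{\tilde q}$ becomes precisely $\frac{(\tilde p\varepsilon)^{1-\tilde q}}{\tilde q}$, as claimed. I expect no real obstacle here: the entire content is the scaling substitution $\lambda=(\tilde p\varepsilon)^{1/\tilde p}$ together with this one-line exponent identity.
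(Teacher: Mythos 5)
Your proposal is correct: the scaling substitution $a=\lambda|x|$, $b=\lambda^{-1}|y|$ with $\lambda=(\tilde p\varepsilon)^{1/\tilde p}$ in the classical Young inequality, together with the exponent identity $\tilde q/\tilde p=\tilde q-1$, yields exactly the stated coefficient $\frac{(\tilde p\varepsilon)^{1-\tilde q}}{\tilde q}$. The paper itself records this lemma as a recalled classical fact and gives no proof, so there is nothing to compare against; your argument is the standard one and is complete.
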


\begin{lem}(Gronwall's inequality)
Assume that $\varphi$, $\alpha$ and $\beta$ are integrable functions and 
$\varphi(t)\geq 0$. If 
\begin{equation}
\varphi'(t)\leq \alpha(t)+\beta(t)\varphi(t),
\end{equation}
then 
\begin{equation}
\varphi(t)\leq\varphi(t_0)\e{\int_{t_0}^t\beta(\tau)d\tau}+
\int_{t_0}^t\alpha(s)\e{\int_s^t\beta(\tau)d\tau}ds, ~ ~ ~ t\geq t_0.
\end{equation}
\end{lem}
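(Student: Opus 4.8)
The plan is to prove this via the standard integrating-factor argument, which delivers the stated formula directly. First I would introduce the factor $\mu(t):=\e{-\int_{t_0}^t\beta(\tau)\,d\tau}$, which is strictly positive and satisfies $\mu'(t)=-\beta(t)\mu(t)$. Multiplying the hypothesis $\varphi'(t)\leq \alpha(t)+\beta(t)\varphi(t)$ by $\mu(t)>0$ and rearranging gives $\mu(t)\varphi'(t)-\mu(t)\beta(t)\varphi(t)\leq \mu(t)\alpha(t)$. The key observation is that the left-hand side is exactly the derivative of the product $\mu(t)\varphi(t)$: by the product rule $\frac{d}{dt}\big(\mu(t)\varphi(t)\big)=\mu(t)\varphi'(t)+\mu'(t)\varphi(t)=\mu(t)\varphi'(t)-\mu(t)\beta(t)\varphi(t)$. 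Hence I obtain the clean differential inequality $\frac{d}{dt}\big(\mu(t)\varphi(t)\big)\leq \mu(t)\alpha(t)$.

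The second step is to integrate this inequality over $[t_0,t]$ and simplify the exponentials. Using $\mu(t_0)=1$ this yields $\mu(t)\varphi(t)\leq \varphi(t_0)+\int_{t_0}^t\mu(s)\alpha(s)\,ds$. Dividing through by $\mu(t)>0$, i.e.\ multiplying by $\mu(t)^{-1}=\e{\int_{t_0}^t\beta(\tau)\,d\tau}$, produces the first term $\varphi(t_0)\e{\int_{t_0}^t\beta(\tau)\,d\tau}$ immediately. For the integral term I would use the identity $\mu(s)/\mu(t)=\e{-\int_{t_0}^s\beta(\tau)\,d\tau+\int_{t_0}^t\beta(\tau)\,d\tau}=\e{\int_s^t\beta(\tau)\,d\tau}$, which converts $\mu(t)^{-1}\int_{t_0}^t\mu(s)\alpha(s)\,ds$ into $\int_{t_0}^t\alpha(s)\e{\int_s^t\beta(\tau)\,d\tau}\,ds$. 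This is precisely the asserted bound.

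The only genuine subtlety — and the step I would treat with care — is the regularity needed to justify integrating the derivative of $\mu\varphi$. Since $\varphi'$ is (implicitly) integrable, $\varphi$ is absolutely continuous, and since $\beta$ is integrable the integrating factor $\mu$ is absolutely continuous as well; the product of two absolutely continuous functions is absolutely continuous, so the fundamental theorem of calculus applies and the product rule holds almost everywhere. With this bookkeeping in place the integration step is legitimate. I note that positivity of $\varphi$ is not actually needed to run the manipulations above; the hypotheses $\varphi\geq 0$ and integrability are simply the natural conditions under which the inequality is stated and later applied in the a-priori estimates.
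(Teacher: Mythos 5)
Your integrating-factor argument is correct and complete; the identity $\mu(s)/\mu(t)=\e{\int_s^t\beta(\tau)\,d\tau}$ and the absolute-continuity bookkeeping are exactly what is needed, and you are right that $\varphi\geq 0$ plays no role in the derivation. The paper itself states this lemma without proof, as one of several classical deterministic results recalled for later use, so there is nothing to compare against beyond noting that yours is the standard textbook argument.
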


\begin{lem}(Uniform Gronwall Lemma~\cite[Lemma 1.1]{Temam}) 
Let $g$, $h$, $y$  be positive locally integrable functions on 
$(t_0,\infty)$ such that $y'$ is locally integrable on $(t_0,\infty)$ 
and which satisfy 
$$\frac{\txtd y}{\txtd t}\leq gy+h, ~~~~ \text{ for }t\geq t_0,$$
$$\int_t^{t+r}g(s)\txtd s\leq  a_1,~~~\int_t^{t+r}h(s)\txtd s\leq a_2,
~~~ \int_t^{t+r}y(s)\txtd s\leq a_3~~~\text{ for }t\geq t_0,$$
where $r,a_1,a_2,a_3$ are positive constants. Then 
$$y(t+r)\leq \left(\frac{a_3}{r}+a_2\right) \e{a_1},~~~~\forall t\geq t_0.$$
\end{lem}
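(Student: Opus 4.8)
The plan is to reduce the statement to the classical differential Gronwall inequality recalled just above, applied on a one–parameter family of subintervals, and then to remove the dependence on the free parameter by averaging. This is exactly the device that copes with the fact that the hypotheses bound the integrals of $g$, $h$ and $y$ only over moving windows of length $r$, so one cannot simply integrate from $t_0$.

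First I would fix $t\geq t_0$ and an auxiliary point $s\in[t,t+r]$, and apply Gronwall's inequality on the interval $[s,t+r]$ (that is, with the initial time $t_0$ there replaced by $s$). From $\frac{\txtd y}{\txtd t}\leq gy+h$ this yields
$$y(t+r)\leq y(s)\,\e{\int_s^{t+r}g(\tau)\,\txtd\tau}+\int_s^{t+r}h(\tau)\,\e{\int_\tau^{t+r}g(\rho)\,\txtd\rho}\,\txtd\tau.$$
Next I would invoke the sign conditions $g,h\geq 0$ together with the window bounds. Since $g\geq 0$ and $s\geq t$, the interval $[s,t+r]$ is contained in $[t,t+r]$, so $\int_s^{t+r}g\leq\int_t^{t+r}g\leq a_1$ and likewise $\int_\tau^{t+r}g\leq a_1$ for every $\tau\in[s,t+r]$. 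Using these together with $\int_s^{t+r}h\leq\int_t^{t+r}h\leq a_2$ collapses the estimate into the clean $s$-dependent bound
$$y(t+r)\leq\left(y(s)+a_2\right)\e{a_1},\qquad\text{for every }s\in[t,t+r].$$

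The final step is the averaging, which is the only idea genuinely beyond the classical lemma. Because the left-hand side is independent of $s$, I would integrate the previous inequality in $s$ over $[t,t+r]$, giving
$$r\,y(t+r)\leq\e{a_1}\left(\int_t^{t+r}y(s)\,\txtd s+a_2 r\right)\leq\e{a_1}\left(a_3+a_2 r\right),$$
where the third window bound $\int_t^{t+r}y\leq a_3$ was used. Dividing by $r$ produces the asserted estimate $y(t+r)\leq\left(\frac{a_3}{r}+a_2\right)\e{a_1}$.

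The computation is essentially routine, and I expect no real obstacle: the only point requiring attention is to keep every inequality oriented consistently with the positivity hypotheses $g,h,y\geq 0$, which is precisely what legitimizes both the replacement of the integration limits and the averaging step.
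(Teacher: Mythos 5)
Your argument is correct and complete: the application of the differential Gronwall inequality on $[s,t+r]$, the use of positivity to bound the exponentials by $\e{a_1}$, and the averaging in $s$ over $[t,t+r]$ are exactly the classical proof of this lemma (it is the proof given in Temam's book, to which the paper defers). The paper itself states the lemma without proof, citing \cite[Lemma 1.1]{Temam}, so there is nothing further to compare against.
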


\begin{lem}(Minkowski's inequality) 
Let $p>1$ and $f,g\in \mathbb R$, then 
$$|f+g|^p\leq 2^{p-1}(|f|^p+|g|^p).$$
\end{lem}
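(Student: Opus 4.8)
The plan is to reduce the claim to the scalar convexity inequality $(a+b)^p \le 2^{p-1}(a^p+b^p)$ for nonnegative reals $a,b$, and then to combine it with the ordinary triangle inequality on $\mathbb R$. First I would observe that since $t\mapsto t^p$ is nondecreasing on $[0,\infty)$ and $|f+g|\le |f|+|g|$, we have $|f+g|^p\le (|f|+|g|)^p$. This removes the signs and lets me work entirely with the nonnegative quantities $a:=|f|$ and $b:=|g|$, so the whole problem reduces to bounding $(a+b)^p$.

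The heart of the matter is then the inequality $(a+b)^p \le 2^{p-1}(a^p+b^p)$, which I would derive from the convexity of $\phi(t):=t^p$ on $[0,\infty)$. Convexity holds precisely because $p>1$: one checks $\phi''(t)=p(p-1)t^{p-2}\ge 0$ for $t>0$. Applying the midpoint form of Jensen's inequality, $\phi\!\left(\tfrac{a+b}{2}\right)\le \tfrac{\phi(a)+\phi(b)}{2}$, that is $\left(\tfrac{a+b}{2}\right)^p\le \tfrac{a^p+b^p}{2}$, and multiplying through by $2^p$ yields exactly $(a+b)^p\le 2^{p-1}(a^p+b^p)$. Chaining the two displays gives $|f+g|^p\le (|f|+|g|)^p\le 2^{p-1}(|f|^p+|g|^p)$, as claimed.

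If one prefers to avoid quoting convexity as a black box, the same scalar bound follows by an elementary one-variable argument. The case $a=b=0$ is trivial; otherwise dividing by $(a+b)^p$ and setting $t:=a/(a+b)\in[0,1]$ reduces the claim to showing $\psi(t):=t^p+(1-t)^p\ge 2^{1-p}$. Since $\psi'(t)=p\,t^{p-1}-p\,(1-t)^{p-1}$ vanishes only at $t=\tfrac12$ and $\psi''(t)=p(p-1)\big(t^{p-2}+(1-t)^{p-2}\big)>0$ on $(0,1)$, the point $t=\tfrac12$ is the global minimum, where $\psi(\tfrac12)=2\cdot 2^{-p}=2^{1-p}$, which is precisely the required bound.

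I do not expect any real obstacle here: the only substantive input is the convexity of $t\mapsto t^p$ for $p>1$, and everything else is the triangle inequality together with monotonicity of the power map. I would merely be careful to state convexity on the correct range $[0,\infty)$ and to note that the use of $p>1$ (rather than $p\ge 1$) is where strictness enters. As a sanity check, the inequality is sharp, with equality when $f=g$, since then both sides equal $2^p|f|^p$.
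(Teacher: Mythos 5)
Your proof is correct: the reduction via the triangle inequality and monotonicity of $t\mapsto t^p$, followed by midpoint convexity of $t^p$ for $p>1$, is the standard argument for this bound. The paper itself states this lemma without proof, as one of several classical deterministic facts recalled for later use, so there is no proof in the paper to compare against; your write-up fills that in correctly (and note only that the inequality in fact also holds at $p=1$, where it degenerates to the triangle inequality).
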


\begin{lem}(Poincar\'e's inequality) 
Let $1\leq p < \infty$ and let $D\subset\mathbb R^n$ be a 
bounded open subset. Then there exists a constant $c= c(D,p)$ 
such that for every function $u\in W_0^{1,p}(D)$ 
\begin{equation} \|u\|_{p}\leq c\|\nabla u\|_{p}.
\end{equation} 
\end{lem}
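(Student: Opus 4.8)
The plan is to prove the inequality first for smooth, compactly supported functions and then to pass to the general case by density, exploiting that $W_0^{1,p}(D)$ is by definition the closure of $C_c^\infty(D)$ (smooth functions with compact support in $D$) in the $W^{1,p}$-norm. For $u\in C_c^\infty(D)$ I would extend $u$ by zero to all of $\mathbb R^n$, which keeps it smooth with compact support. Since $D$ is bounded, it is contained in a slab $\{x=(x_1,x')\in\mathbb R\times\mathbb R^{n-1}: a<x_1<b\}$ of width $L:=b-a$, where I write $x'=(x_2,\dots,x_n)$.

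First I would apply the fundamental theorem of calculus in the $x_1$-direction. Because the (extended) $u$ vanishes for $x_1\le a$,
\[
u(x_1,x')=\int_a^{x_1}\partial_{x_1}u(t,x')\,dt .
\]
Hölder's inequality with conjugate exponent $q=p/(p-1)$ then gives, using $x_1-a\le L$,
\[
|u(x_1,x')|\le (x_1-a)^{1/q}\Big(\int_a^{b}|\partial_{x_1}u(t,x')|^p\,dt\Big)^{1/p}
\le L^{1/q}\Big(\int_a^{b}|\partial_{x_1}u(t,x')|^p\,dt\Big)^{1/p}.
\]

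Next I would raise this to the $p$-th power, using $p/q=p-1$, and integrate in $x_1$ over $(a,b)$ and then in $x'$ over $\mathbb R^{n-1}$. The inner $x_1$-integral contributes a further factor $L$, so that
\[
\|u\|_p^p\le L^{p}\int_{\mathbb R^{n-1}}\int_a^b|\partial_{x_1}u(t,x')|^p\,dt\,dx'
=L^{p}\,\|\partial_{x_1}u\|_p^p\le L^{p}\,\|\nabla u\|_p^p ,
\]
where the last inequality uses the pointwise bound $|\partial_{x_1}u|\le|\nabla u|$. Taking $p$-th roots yields $\|u\|_p\le L\,\|\nabla u\|_p$, i.e.\ the claim with $c=L$ (which in fact does not depend on $p$).

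Finally, for arbitrary $u\in W_0^{1,p}(D)$ I would choose a sequence $u_n\in C_c^\infty(D)$ with $u_n\to u$ in $W^{1,p}(D)$ and pass to the limit in $\|u_n\|_p\le L\|\nabla u_n\|_p$, which is legitimate since both sides are continuous with respect to $W^{1,p}$-convergence. There is no genuinely hard step here: the only point requiring care is the density of $C_c^\infty(D)$ in $W_0^{1,p}(D)$, which however is precisely the definition of $W_0^{1,p}(D)$ and may thus be invoked directly. As an alternative one could argue by contradiction via the compact embedding $W_0^{1,p}(D)\hookrightarrow L^p(D)$: a sequence with $\|u_n\|_p=1$ and $\|\nabla u_n\|_p\to0$ would subconverge in $L^p$ to a function with vanishing gradient, hence to a constant, which must be $0$ by the zero boundary trace, contradicting $\|u_n\|_p=1$.
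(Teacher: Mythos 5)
Your argument is correct and complete. Note, however, that the paper does not prove this lemma at all: Poincar\'e's inequality is listed there only as one of several recalled classical deterministic facts (alongside Young, Gronwall, Minkowski) used later in the a-priori estimates, so there is no in-paper proof to compare against. What you give is the standard textbook proof --- enclose $D$ in a slab of width $L$, use the fundamental theorem of calculus in one coordinate direction together with H\"older's inequality to get $\|u\|_p\leq L\|\partial_{x_1}u\|_p\leq L\|\nabla u\|_p$ for $u\in C_c^\infty(D)$, then conclude by density, which is exactly the definition of $W_0^{1,p}(D)$ --- and it is sound. The only cosmetic point is the case $p=1$, where the conjugate exponent $q=\infty$ requires reading $L^{1/q}$ as $1$ (or simply bounding $|u(x_1,x')|\leq\int_a^b|\partial_{x_1}u(t,x')|\,dt$ directly); your constant $c=L$ is then uniform in $p$ as you claim. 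The alternative compactness argument you sketch is also valid but needs Rellich--Kondrachov and is less elementary.
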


Having recalled the relevant deterministic preliminaries, we can
now proceed with the main line of our argument. For the following 
result about the stochastic convolutions Assumption \ref{ass:2} (3) 
is crucial. 

\begin{lem}\label{lem:temp}
Suppose Assumptions \ref{ass:1} and \ref{ass:2} hold. 
Then for every $p\geq 1$ 
$$\|z_1(\omega)\|_p^p \text{ and }\|z_2(\omega)\|_2^2$$ are tempered 
random variables. 
\end{lem}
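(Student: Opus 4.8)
The plan is to verify for each of the two variables the sufficient condition for temperedness recorded in Remark \ref{rem:temp}: a nonnegative random variable $X$ is tempered as soon as $\E\big(\sup_{t\in[0,1]}X(\theta_t\omega)\big)<\I$. It therefore suffices to establish
$$\E\Big(\sup_{t\in[0,1]}\|z_1(\theta_t\omega)\|_p^p\Big)<\I \ \text{ for every } p\geq 1, \qquad \E\Big(\sup_{t\in[0,1]}\|z_2(\theta_t\omega)\|_2^2\Big)<\I.$$
For $t\in[0,1]$ the shifted variables $z_i(\theta_t\omega)$ are exactly the stationary Ornstein--Uhlenbeck processes solving \eqref{eqn:eqnoi1}--\eqref{eqn:eqnoi2}; in particular each is a Gaussian process, so by Remark \ref{rem:moments} and the Kahane--Khintchine inequality all of its moments are comparable, and the supremum over the compact interval $[0,1]$ becomes integrable (via Fernique's theorem) once path-continuity in the relevant space has been secured.

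The component $z_2$ is the easier one. Since $B_2$ is Hilbert--Schmidt, $Q_2=B_2B_2^*$ is trace class (Assumption \ref{ass:2}\ref{ass:B2}), and because $\sigma(x)\geq\delta>0$ the associated stationary covariance $\int_0^\I e^{-2s\sigma(\cdot)}Q_2\,\txtd s$ is again trace class, with trace at most $(2\delta)^{-1}\mathrm{Tr}(Q_2)$. Hence $z_2(\theta_t\omega)$ is a mean-square continuous, $L^2(D)$-valued Gaussian process whose second moment $\E\|z_2(\theta_t\omega)\|_2^2$ is finite and, by stationarity, constant in $t$. Mean-square continuity yields a modification with continuous $L^2(D)$-paths, and then Fernique's theorem gives $\E\big(\sup_{t\in[0,1]}\|z_2(\theta_t\omega)\|_2^2\big)<\I$, which is the claimed bound.

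For $z_1$ the difficulty is that control is required in $L^p(D)$ for \emph{every} $p$, while the splitting offers no regularizing operator in this component, so all smoothing must be extracted from the noise. Expanding $z_1=\sum_k z_1^k e_k$, the coefficients $z_1^k$ are independent centred Gaussians with $\E|z_1^k|^2=\delta_k/(2\lambda_k)$, whence $\E\|(-A)^s z_1\|_2^2=\tfrac12\sum_k \delta_k\lambda_k^{2s-1}$, which is finite for every $s\leq\alpha+1$ by Assumption \ref{ass:2}\ref{ass:basis}. The passage from this Hilbertian regularity to $L^\I(D)$ (and thus to all $L^p(D)$) is where the uniform eigenfunction bound $|e_k(x)|^2<\kappa$ enters: a weighted Cauchy--Schwarz estimate gives, pointwise in $x$,
$$|z_1(x)|\leq \Big(\sum_k(z_1^k)^2\lambda_k^{2s}\Big)^{1/2}\Big(\sum_k\lambda_k^{-2s}|e_k(x)|^2\Big)^{1/2}\leq \sqrt{\kappa}\,\Big(\sum_k\lambda_k^{-2s}\Big)^{1/2}\|(-A)^s z_1\|_2,$$
so that $\|z_1\|_\I\leq C\|(-A)^s z_1\|_2$ whenever $\sum_k\lambda_k^{-2s}<\I$, i.e.\ (by Weyl's law) whenever $s>n/4$. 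For $s$ in the range $n/4<s\leq\alpha+1$ (where the margin provided by Assumption \ref{ass:2}\ref{ass:basis} is exactly what is needed) this embeds $z_1$ into $L^\I(D)\subset\bigcap_{p\geq 1}L^p(D)$, with $\|z_1\|_p^p\leq |D|\,C^p\,\|(-A)^s z_1\|_2^p$ and all moments finite by Gaussianity.

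The main obstacle, as I see it, is to upgrade these fixed-time bounds to a bound on the supremum over $t\in[0,1]$ at this level of spatial regularity. Writing $z_1(\theta_t\omega)=e^{tA}z_1(\omega)+\int_0^t e^{(t-s)A}\,\txtd\omega_1(s)$, the first summand is controlled by analyticity of the semigroup on $\cD((-A)^s)$ together with the $t=0$ bound above, while the stochastic convolution in the second summand must be shown to possess a modification with continuous paths in $\cD((-A)^s)$. This is precisely the factorization method of Da Prato--Zabczyk, and the role of the exponent $\alpha\in(0,\tfrac12)$ in Assumption \ref{ass:2}\ref{ass:basis} is to furnish the integrability margin
$$\int_0^1 r^{-2\beta}\,\|(-A)^s e^{rA}Q_1^{1/2}\|_{\mathrm{HS}}^2\,\txtd r\ \leq\ C\sum_k\delta_k\lambda_k^{2s+2\beta-1}<\I$$
needed to run it, which holds for a factorization exponent $\beta\in(0,\tfrac12)$ provided $s+\beta\leq\alpha+1$. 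Once continuity in $\cD((-A)^s)$ is established, Fernique's theorem bounds $\E\big(\sup_{t\in[0,1]}\|(-A)^s z_1(\theta_t\omega)\|_2^q\big)$ for every $q\geq 1$; combined with the embedding of the preceding paragraph this gives $\E\big(\sup_{t\in[0,1]}\|z_1(\theta_t\omega)\|_p^p\big)<\I$ for all $p\geq1$, which completes the verification and hence the proof.
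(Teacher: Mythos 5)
Your overall strategy (verifying the sufficient condition of Remark~\ref{rem:temp}) matches the paper's, and your conclusion for $z_2$ is correct, but the route you take for $z_1$ has a genuine gap: it silently imposes a restriction on the spatial dimension $n$ that is not in the hypotheses. You control $\|z_1\|_\infty$ by $\|(-A)^s z_1\|_2$, which requires simultaneously $\sum_k\lambda_k^{-2s}<\infty$ (i.e.\ $s>n/4$, by the Weyl asymptotics $\lambda_k\sim k^{2/n}$ for the Dirichlet Laplacian) and $\E\|(-A)^sz_1\|_2^2=\tfrac12\sum_k\delta_k\lambda_k^{2s-1}<\infty$ (i.e.\ $s\le\alpha+1$ under Assumption~\ref{ass:2}~\ref{ass:basis}). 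Since $\alpha<\tfrac12$, the window $n/4<s\le\alpha+1$ is empty for $n\ge 6$ and forces $\alpha>n/4-1$ for $n=5$; replacing the $L^\infty$ bound by the Sobolev embedding $\cD((-A)^s)\hookrightarrow L^p(D)$ does not rescue the argument, because the lemma demands every $p\ge 1$ and the required $s$ tends to $n/4$ as $p\to\infty$. The paper avoids this entirely by never passing through $L^2$-based fractional domains: it writes $z_1(\theta_t\omega)$ via the factorization formula with integrand $Y(x,\tau)$, uses the eigenfunction bound $|e_k(x)|^2<\kappa$ only to obtain a pointwise-in-$x$ variance bound $\mathrm{Var}(Y(x,\tau))\le \kappa^2 2^{2\alpha-1}\Gamma(1-2\alpha)\sum_k\delta_k\lambda_k^{2\alpha-1}<\infty$, integrates over the bounded domain to control $\|Y(\cdot,\tau)\|_p$ for every finite $p$, and then applies the $L^p$ smoothing estimate \eqref{eqn:ansem} of the analytic semigroup to land in $W^{\alpha,p}(D)$. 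That argument is dimension-free and only needs $\sum_k\delta_k\lambda_k^{2\alpha-1}<\infty$, which is weaker than what your route requires.

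Two smaller points. First, your assertion for $z_2$ that mean-square continuity yields a modification with continuous $L^2(D)$-paths is false as a general principle (by Belyaev's dichotomy there exist mean-square continuous stationary Gaussian processes that are a.s.\ unbounded on every interval), so the appeal to Fernique's theorem is not yet justified; the paper sidesteps this by estimating $\E\bigl(\sup_{t\in[0,1]}\|z_2(\theta_t\omega)\|_2^2\bigr)$ directly with the Burkholder--Davis--Gundy inequality, which is both simpler and rigorous. Second, Remark~\ref{rem:moments} concerns the convolution $W_{\mathbf A}(t)$ started at $0$, not the stationary processes $z_i(\theta_t\omega)$, so it cannot be cited in the way you do.
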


\begin{proof}
Using $0<\delta\leq\sigma(x)\leq \tilde \delta$ and the Burkholder-Davis-Gundy 
inequality we have 
\begin{align*}
&\mathbb E\left(\sup_{t\in [0,1]}\|z_2(\theta_t\omega)\|_2^2\right)\\&
=\mathbb E\left(\sup_{t\in [0,1]}\left\|\int_{-\infty}^t\e{-(t-s)\sigma(x)}~
\txtd \omega_2(s)\right \|_2^2\right)\\
&=\mathbb E\left(\sup_{t\in[0,1]}\int_D\e{-2t\sigma(x)}
\left|\int_{-\infty}^t\e{s\sigma(x)}~\txtd \omega_2(s)\right|^2~\txtd x\right)\\
&\leq \mathbb E\left(\sup_{t\in[0,1]}\e{-2t\delta}\int_D\left|
\int_{-\infty}^t\e{s\sigma(x)}~\txtd \omega_2(s)\right|^2~\txtd x\right)\\
&\leq \mathbb E\left(\sup_{t\in[0,1]}\left\|\int_{-\infty}^t\e{s\sigma(x)}~
\txtd \omega_2(s)\right\|_2^2\right)\\
&\leq C\mathbb E\left(\int_{-\infty}^1\|\e{s\sigma(x)}\|_2^2~\txtd s\right)\\
&\leq C|D|\int_{-\infty}^1\e{2s\tilde \delta}~\txtd s= \frac{C|D|}{2\tilde \delta} 
\e{2\tilde \delta}\\
&<\infty.
\end{align*}
The temperedness of $\|z_2(\omega)\|_2^2$ then follows directly using Remark~\ref{rem:temp}. 
Now, we consider the random variable $\|z_1(\omega)\|_p^p$.  Note that using the 
so-called factorization method we have for $(x,t)\in D\times [0,T]$ and $\alpha\in (0,1/2)$ 
(see \cite[Ch. 5.3]{PratoZab})
\begin{equation}
z_1(x,\theta_t\omega)=\frac{\sin(\pi \alpha)}{\pi}\int_{-\infty}^t\e{(t-\tau)A}(t-\tau)^{\alpha-1} 
Y(x,\tau)~\txtd \tau,
\end{equation}
with 
\begin{align*}
Y(x,\tau)&=\int_0^\tau\e{(\tau-s)A}(\tau-s)^{-\alpha} B_1~\txtd W_1(x,s)\\
&=\sum_{k=1}^\infty \int_0^\tau \e{(\tau-s)A}(\tau-s)^{-\alpha}B_1e_k(x)d\beta_k(s)\\
&=\sum_{k=1}^\infty \int_0^\tau \e{-(\tau-s)\lambda_k}(\tau-s)^{-\alpha}\sqrt{\delta_k}
e_k(x)d\beta_k(s),
\end{align*}
where we have used the formal representation $W_1(x,s)=\sum_{k=1}^\infty \beta_k(s)e_k(x)$ 
of the cylindrical Wiener process, with $\{\beta_k\}_{k=1}^\infty$ being a sequence of 
mutually independent real-valued Brownian motions. 
$Y(x,\tau)$ is a real-valued Gaussian random variable with mean zero and variance 
\begin{align*}
&\text{Var}(Y(x,\tau))=\mathbb E\left[|Y(x,\tau)|^2\right]\\
&=\mathbb E\left[\sum_{k=1}^\infty\left( \int_0^\tau \e{-(\tau-s)\lambda_k}(t-s)^{-\alpha}
\sqrt{\delta_k}~\txtd \beta_k(s)\right)^2|e_k(x)|^2\right]\\
&=\sum_{k=1}^\infty \delta_k |e_k(x)|^2 \mathbb E\left[\left( \int_0^\tau 
\e{-(\tau-s)\lambda_k}(t-s)^{-\alpha}~\txtd \beta_k(s)\right)^2\right]\\
&=\sum_{k=1}^\infty \delta_k |e_k(x)|^2  \int_0^\tau \e{-2s\lambda_k}s^{-2\alpha}~\txtd s,
\end{align*}
where we have used Parseval's identity and the It\^o isometry. 
Our assumption on the boundedness of the eigenfunctions $\{e_k\}_{k=1}^\infty$ 
yields together with Assumption \ref{ass:2} (3) that 
\begin{align*}
\text{Var}(Y(x,\tau))&<\sum_{k=1}^\infty\delta_k \kappa^2\int_0^\infty 
\e{-2s\lambda_k}s^{-2\alpha}~\txtd s\\
&=\kappa^22^{2\alpha-1}\Gamma(1-2\alpha)\sum_{k=1}^\infty\delta_k
\lambda_k^{2\alpha-1}<\infty.
\end{align*}
Hence, $\mathbb E\left[\left|Y(x,\tau)\right|^{2m}\right] \leq C_m$ for $C_m>0$ 
and every $m\geq 1$ (note that all odd moments of a Gaussian random variable are zero). 
Thus we have 
\begin{equation*}\label{eqn:ex}
\mathbb E\left[\int_0^T\int_D|Y(x,\tau)|^{2m}\txtd x\txtd \tau\right] \leq TC_m|D|,
\end{equation*}
i.e., in particular for all $p\geq 1$ we have $Y\in L^{p}(D\times [0,T])$ $\mathbb P$-a.s.. 
We now observe 
\begin{align*}
&\|z_1(\theta_t\omega)\|_{W^{\alpha,p}(D)}\\ &\leq \frac{\sin(\pi\alpha)}{\pi}
\int_{-\infty}^t(t-\tau)^{\alpha-1}\|\e{(t-\tau)A}Y(\cdot,\tau)\|_{W^{\alpha,p}(D)}~\txtd \tau\\
&\leq C\frac{\sin(\pi\alpha)}{\pi}\int_{-\infty}^t(t-\tau)^{\alpha-1}(t-\tau)^{-{\alpha/2}}
e^{-\lambda (t-\tau)} \|Y(\cdot,\tau)\|_{p}~\txtd \tau\\
&\leq C \sup_{\tau \in (-\infty,t]}\|Y(\cdot,\tau)\|_p \int_{-\infty}^t(t-\tau)^{\alpha/2-1} 
e^{-\lambda (t-\tau)}~\txtd \tau,
\end{align*}
where we have used \eqref{eqn:ansem} and thus 
\begin{align*}
&\mathbb E\left(\sup_{t\in [0,1]}\|z_1(\theta_t\omega)\|_p\right)\\&\leq  
C~\mathbb E\left(\sup_{t\in [0,1]}\sup_{\tau \in(-\infty,t]}\|Y(\cdot, \tau)\|_p\right) 
\int_{0}^\infty\tau^{\alpha/2-1}e^{-\lambda\tau}~\txtd \tau\\
&=  C~\mathbb E\left(\sup_{t\in [0,1]}\sup_{\tau \in(-\infty,t]}\|Y(\cdot, \tau)\|_p\right) 
\frac{\Gamma(\alpha/2)}{\lambda^{\alpha/2}}.
\end{align*}
Now, the right hand side is finite as all  moments of $Y(x,\tau)$ are bounded 
uniformly in $x,\tau$, see above. Due to embedding of Lebesgue spaces on a bounded domain 
we have that
$$\mathbb E\left(\sup_{t\in [0,1]}
\|z_1(\theta_t\omega)\|_p\right)<\infty ~~ \text{ implies }~~ 
\mathbb E\left(\sup_{t\in [0,1]}\|z_1(\theta_t\omega)\|_p^p\right)<\infty,$$ 
i.e., temperedness of $\|z_1(\omega)\|_p^p$ follows again with Remark \ref{rem:temp}. 
\end{proof}

\begin{rem}\label{rem:nabla}
	\begin{itemize}
		\item [1)] Note that Assumption~\ref{ass:2} (3) together with the boundedness of $e_{k}$ for $k\in\mathbb{N}$ are essential for this proof. One can extend such statements for general open bounded domains in $D\subset\mathbb{R}^{n}$, according to Remark~5.27 and Theorem~5.28 in~\cite{PratoZab}.
	\item [2)] Regarding again Assumption~\ref{ass:2} (3) one can show in a similar 
	way that $ z_1 \in W^{1,p}(D)$ and in particular also $\|\nabla z_1(\omega)\|_p^p$ is a 
	tempered random variable for all $p\geq 1$. 
	\end{itemize}
\end{rem}

\begin{rem}
	Alternatively, one can introduce the Ornstein-Uhlenbeck processes $z_1$ and $z_2$ using integration by parts. We applied the factorization Lemma for the definition of $z_1$ in order to obtain regularity results for $z_1$ based on the interplay between the eigenvalues of the linear part and of the covariance operator of the noise. \\
	Using integration by parts, one infers that
	\begin{align*}
	z_{1}(\theta_t\omega) =\int\limits_{-\infty}^{t} \exp((t-\tau)A)~\txtd \omega_1(t)& =\omega_{1}(t) + A \int\limits_{-\infty}^{t} \exp((t-\tau)A)\omega_{1}(\tau)~\txtd \tau \\
	&= - A \int\limits_{-\infty}^{t} \exp((t-\tau)A)(\omega_{1}(t) -\omega_{1}(\tau))~\txtd \tau.
	\end{align*}
	This expression can also be used in order to investigate the regularity of $z_1$ in a Banach space $\cH$ as follows:
	\begin{align*}
	&\Big\| A \int\limits_{-\infty}^{t} \exp((t-\tau)A)(\omega_{1}(t) -\omega_{1}(\tau))~\txtd \tau\Big\|_{\cH}  \\&\leq C \int\limits_{-\infty}^{t} (t-\tau )^{-1} \|\exp(t-\tau) A\|_{\cH} \|\omega_{1}(t) -\omega_{1}(\tau)\|_{\cH}~\txtd \tau.
	\end{align*}
	Here one uses the H\"older-continuity of $\omega_{1}$ in an appropriate function space in order to compensate the singularity in the previous formula.\\
	In our case, we need $z_1\in D(A^{\alpha/2})=W^{\alpha,p}(D)$. Letting $\omega_{1}\in D(A^{\varepsilon})$ for $\varepsilon\geq 0$ and using that $\omega_1$ is $\beta$-H\"older continuous with $\beta\leq 1/2$ one has
	\begin{align*}
	\|z_{1}(\theta_{t}\omega)\|_{W^{\alpha,p}(D)} \leq \int\limits_{-\infty}^{t} (t-\tau )^{\beta+\varepsilon-\alpha/2-1} \|\omega_{1}\|_{\beta,\varepsilon} \|\exp((t-\tau)A)\|~\txtd \tau,
	\end{align*}
	which is well-defined if $\beta+\varepsilon>\alpha/2$. Such a condition provides again an interplay between the time and space regularity of the stochastic convolution.
\end{rem}

Based on the results regarding the stochastic convolutions we can now investigate the long-time behaviour of our system. The first step is contained in the next lemma, which establishes the existence of an absorbing set.

\begin{lem}\label{lem:abs}
Suppose Assumptions \ref{ass:1} and \ref{ass:2} hold. Then there exists 
a set $\{\cB(\omega)\}_{\omega\in \Omega}\in \mathcal T$ such that 
$\{\cB(\omega)\}_{\omega \in \Omega}$ is a bounded absorbing set for $\varphi$. 
In particular, for any $\cD=\{\cD(\omega)\}_{\omega\in \Omega}\in \mathcal T$ 
and every $\omega \in \Omega$ there exists a random time $t_\cD(\omega)$ such 
that for all $t\geq t_\cD(\omega)$ 
\begin{equation}
\varphi(t,\theta_{-t}\omega,\cD(\theta_{-t}\omega)) \subset \cB(\omega). 
\end{equation}
\end{lem}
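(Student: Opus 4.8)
The plan is to pass to the transformed, pathwise system \eqref{eqn:RDS1}--\eqref{eqn:RDS2} for $v=(v_1,v_2)^\top$, establish a dissipative energy estimate for $y(t):=\|v(t)\|_{\mathbb H}^2$, and then translate it back to $\varphi$ through the identity $\varphi(t,\theta_{-t}\omega,u^0)=v(t,\theta_{-t}\omega,u^0-z(\theta_{-t}\omega))+z(\omega)$. First I would test \eqref{eqn:RDS1} with $v_1$ and \eqref{eqn:RDS2} with $v_2$ in $L^2(D)$. Integration by parts together with the Dirichlet condition \eqref{eqn:boundcond} turns the diffusion into $-d\|\nabla v_1\|_2^2$, which the Poincar\'e inequality converts into genuine $L^2$-dissipation $-d\,c\,\|v_1\|_2^2$; in the second component Assumption~\ref{ass:1}~\ref{ass:sigma} yields $\langle\sigma v_2,v_2\rangle\geq\delta\|v_2\|_2^2$.

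Next I would estimate the reaction terms. Writing $v_1=(v_1+z_1)-z_1$ and using the dissipativity in Assumption~\ref{ass:1}~\ref{ass:h} gives $-\langle h(x,v_1+z_1),v_1+z_1\rangle\leq-\delta_1\|v_1+z_1\|_p^p+\delta_3|D|$, while the leftover $\langle h(x,v_1+z_1),z_1\rangle$ is absorbed into $\tfrac{\delta_1}{2}\|v_1+z_1\|_p^p$ plus powers of $\|z_1\|_p$ by \eqref{eqn:condhnew} and the $\varepsilon$-Young inequality. The coupling $f$ is controlled with \eqref{eqn:condf}: since $p_1<p-1$, all powers of $|v_1+z_1|$ produced by $f$ are subcritical and are absorbed into $\delta_1\|v_1+z_1\|_p^p$, while the part linear in $v_2$ can be split by Young's inequality into a term $C_1\|v_2\|_2^2$ (with $C_1$ adjustable) plus tempered noise contributions. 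In the second equation $g$ is linear by \eqref{eqn:condgnew}, so $\langle g(x,v_1+z_1),v_2\rangle$ splits into $\tfrac{\delta}{2}\|v_2\|_2^2+C\|v_1+z_1\|_2^2$, and the bound $\|v_1+z_1\|_2^2\leq\varepsilon\|v_1+z_1\|_p^p+C_\varepsilon$ (valid since $p>2$) lets the remainder be absorbed into the $L^p$-dissipation.

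The crux is to combine the two estimates so that the dissipation beats the cross terms. Because the coefficient $C_1$ in front of $\|v_2\|_2^2$ coming from $f$ can be made as small as desired (at the price of a larger, purely noise-dependent constant), I would fix the Young parameters so that $C_1<\tfrac{\delta}{4}$; adding the two estimates then produces a single differential inequality of the form $\tfrac{\txtd}{\txtd t}y\leq-\kappa y+\tilde R(\theta_t\omega)$ with $\kappa>0$, where $\tilde R$ is a sum of constants and of the tempered random variables $\|z_1(\cdot)\|_p^p$ and $\|z_2(\cdot)\|_2^2$ supplied by Lemma~\ref{lem:temp}. (Equivalently, one may weight the two components and argue with $\tfrac12\|v_1\|_2^2+\tfrac{M}{2}\|v_2\|_2^2$ for $M$ large.) I expect this balancing of the coupling against the available dissipation to be the main obstacle; the subcriticality $p_1<p-1$ and the strict positivity $\delta$ of $\sigma$ are precisely what make it possible.

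To conclude I would apply Gronwall's inequality on $[0,t]$, replace $\omega$ by $\theta_{-t}\omega$, and substitute $r=s-t$ to obtain $y(t)\leq y(0)\e{-\kappa t}+\int_{-t}^0\e{\kappa r}\tilde R(\theta_r\omega)\,\txtd r$. Temperedness of $\tilde R$ ensures that the integral converges as $t\to\infty$ to a finite $\rho_0(\omega)=\int_{-\infty}^0\e{\kappa r}\tilde R(\theta_r\omega)\,\txtd r$, whereas the initial term vanishes: for $u^0\in\cD(\theta_{-t}\omega)$ one has $y(0)\leq C(\|u^0\|_{\mathbb H}^2+\|z(\theta_{-t}\omega)\|_{\mathbb H}^2)$, and both $\e{-\kappa t}\|u^0\|_{\mathbb H}^2\to0$ (as $\cD\in\mathcal T$) and $\e{-\kappa t}\|z(\theta_{-t}\omega)\|_{\mathbb H}^2\to0$ (as $z$ is tempered by Lemma~\ref{lem:temp}). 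Since $\varphi(t,\theta_{-t}\omega,u^0)=v(t,\theta_{-t}\omega,\cdot)+z(\omega)$, this yields $\limsup_{t\to\infty}\|\varphi(t,\theta_{-t}\omega,u^0)\|_{\mathbb H}\leq\rho(\omega)$ with $\rho(\omega)$ built from $\sqrt{\rho_0(\omega)}$ and $\|z(\omega)\|_{\mathbb H}$. By Remark~\ref{rem:abs} the ball $\cB(\omega)=B(0,\rho(\omega)+\epsilon)$ absorbs $\cD$, and a final verification that $\rho$ is a tempered random variable (the exponentially weighted integral of a tempered process is again tempered) gives $\cB\in\mathcal T$, completing the proof.
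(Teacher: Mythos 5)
Your proposal is correct and follows essentially the same route as the paper: the energy estimate for $\|v\|_{\mathbb H}^2$ along \eqref{eqn:RDS1}--\eqref{eqn:RDS2}, the splitting $v_1=(v_1+z_1)-z_1$ to exploit the dissipativity of $h$, absorption of the coupling terms via $\varepsilon$-Young using $p_1<p-1$ and $\sigma\geq\delta$, Poincar\'e to convert the gradient term into $L^2$-dissipation, Gronwall, the shift $\omega\mapsto\theta_{-t}\omega$ with a change of variables, and finally Remark~\ref{rem:abs} together with the temperedness of $\|z_1\|_p^p$ and $\|z_2\|_2^2$ from Lemma~\ref{lem:temp}. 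The only cosmetic difference is your suggested weighted functional $\tfrac12\|v_1\|_2^2+\tfrac{M}{2}\|v_2\|_2^2$, which the paper does not need since the same balancing is achieved by choosing the Young parameters small against $\delta$ and $\delta_1$.
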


\begin{proof}
To show the existence of a bounded absorbing set, we want to make use of 
Remark~\ref{rem:abs}, i.e. we need an a-priori estimate in $\mathbb H$. We have 
for $v=(v_1,v_2)^\top$ solution of \eqref{eqn:eqv}
\begin{align*}
&\frac{1}{2}\frac{\txtd}{\txtd t}\left(\|v_1\|^2_2+\|v_2\|^2_2\right)=\frac{1}{2}\frac{\txtd}{\txtd t}\|v\|_\mathbb H^2
=\left\langle \frac{\txtd}{\txtd t}v, v\right\rangle_\mathbb H =\langle \mathbf Av+\mathbf F(v+z(\theta_t\omega)),v\rangle_\mathbb H\\
&=\langle dAv_1,v_1\rangle+\langle F_1(v+z(\theta_t\omega)),v_1\rangle-\langle \sigma(x)v_2,v_2\rangle+\langle F_2(v+z(\theta_t \omega)),v_2\rangle\\
&=-d \|\nabla v_1\|_{2}^2\underbrace{-\langle h(x,v_1+z_1(\theta_t\omega)),v_1\rangle}_{=:I_1} \underbrace{-\langle f(x,v_1+z_1(\theta_t\omega),v_2+z_2(\theta_t\omega)),v_1\rangle}_{=:I_2}\\
&\qquad{}-\delta\|v_2\|_2^2 \underbrace{-\langle g(x,v_1+z_1(\theta_t\omega)),v_2\rangle}_{=:I_3},
\end{align*}
where we have used (\ref{eqn:condsi}). 
We now estimate $I_1$-$I_3$ separately. Deterministic constants denoted as $C,C_1,C_2,...$ may change from line to line. 
Using (\ref{eqn:condh}) and (\ref{eqn:condhnew}) we calculate
\begin{align*}
I_1&=-\int_Dh(x,v_1+z_1( \theta_t\omega))v_1~\txtd x\\
&=-\int_D h(x,v_1+z_1(\theta_t\omega))(v_1+z_1(\theta_t\omega))~\txtd x\\
&\qquad +\int_Dh(x,v_1+z_1(\theta_t\omega))z_1(\theta_t\omega)~\txtd x\\
&\leq -\int_D\delta_1|u_1|^p~\txtd x+\int_D\delta_3~\txtd x+\int_D|h(x,v_1+z_1(\theta_t\omega))||z_1(\theta_t\omega)|~\txtd x\\
&\leq -\delta_1\|u_1\|_{p}^p+C+\delta_8\int_D(1+|u_1|^{p-1})|z_1(\theta_t\omega)|~\txtd x\\
&=-\delta_1\|u_1\|_{p}^p+C+\delta_8\|z_1(\theta_t\omega)\|_{1}+\delta_8\int_D|u_1|^{p-1}|z_1(\theta_t\omega)|~\txtd x\\
&\leq -\delta_1\|u_1\|_{p}^p+C+C_1\|z_1(\theta_t\omega)\|_{2}^2+\frac{\delta_1}{2}\|u_1\|_{p}^p+ C_2 \|z_1(\theta_t\omega)\|_{p}^p\\
&=-\frac{\delta_1}{2}\|u_1\|_{p}^p+C+ C_1\left(\|z_1(\theta_t\omega)\|_{2}^2+ \|z_1(\theta_t\omega)\|_{p}^p\right).
\end{align*}
Furthermore, with (\ref{eqn:condf}) we estimate
\begin{align*}
I_2&=-\int_Df(x,v_1+z_1(\theta_t\omega),v_2+z_2(\theta_t\omega))v_1~\txtd x\\
&\leq \int_D|f(x,v_1+z_1(\theta_t\omega),v_2+z_2(\theta_t\omega))||u_1-z_1(\theta_t\omega)|~\txtd x\\
&\leq \int_D \delta_4(1+|u_1|^{p_1}+|u_2|)|u_1|~\txtd x\\
&\qquad+\int_D\delta_4(1+|u_1|^{p_1}+|u_2|)|z_1(\theta_t\omega)|~\txtd x\\
&=\int_D \delta_4(|u_1|+|u_1|^{p_1+1})~\txtd x+\int_D\delta_4|u_1||u_2|~\txtd x+\delta_4\|z_1(\theta_t\omega)\|_{1}\\
&\qquad +\int_D\delta_4|u_1|^{p_1}|z_1(\theta_t\omega)|~\txtd x+\int_D\delta_4|u_2||z_1(\theta_t\omega)|~\txtd x\\
&\leq \int_D \delta_4(|u_1|+|u_1|^{p_1+1})~\txtd x+\int_D\delta_4|u_1||u_2|~\txtd x+\delta_4\|z_1(\theta_t\omega)\|_{2}^2+C
\\&\qquad +\int_D \frac{\delta_4}{2}|u_1|^{p_1+1}~\txtd x+ C_1 \|z_1(\theta_t\omega)\|_{p_1+1}^{p_1+1}+\int_D\delta_4|u_2||z_1(\theta_t\omega)|~\txtd x\\ 
&\leq \int_D \delta_4\frac{3}{2}(|u_1|+|u_1|^{p_1+1})~\txtd x+\int_D\delta_4|u_1||u_2|~\txtd x+C\\
&\qquad+C_1 \left(\|z_1(\theta_t\omega)\|_{2}^2+\|z_1(\theta_t\omega)\|_{p_1+1}^{p_1+1}\right)+\int_D\delta_4|u_2||z_1(\theta_t\omega)|~\txtd x.
\end{align*}
With \eqref{eqn:condgnew} we compute
\begin{align*}
I_3&=-\int_Dg(x,v_1+z_1(\theta_t\omega))v_2~\txtd x\\
&\leq \int_D|g(x,u_1)||u_2-z_2(\theta_t\omega)|~\txtd x\\
&\leq \int_D\delta_7(1+|u_1|)|u_2|~\txtd x+\int_D\delta_7(1+|u_1|)|z_2(\theta_t\omega)|~\txtd x\\
&=\int_D\delta_7(1+|u_1|)|u_2|~\txtd x+\delta_7\|z_2(\theta_t\omega)\|_{1}+\int_D\delta_7|u_1||z_2(\theta_t\omega)|~\txtd x\\
&\leq \int_D\delta_7(1+|u_1|)|u_2|~\txtd x+\delta_7\|z_2(\theta_t\omega)\|_{2}^2+C+\int_D\delta_7|u_1||z_2(\theta_t\omega)|~\txtd x.
\end{align*}
Now, combining the estimates for $I_2$ and $I_3$ yields
\begin{align*}
&I_2+I_3\\
&\leq \int_D\delta_7(1+|u_1|)|u_2|~\txtd x+\int_D\delta_7|u_1||z_2(\theta_t\omega)|~\txtd x\\
&\qquad +\int_D \delta_4\frac{3}{2}(|u_1|+|u_1|^{p_1+1})~\txtd x+\int_D\delta_4|u_1||u_2|~\txtd x+\int_D\delta_4|u_2||z_1(\theta_t\omega)|~\txtd x\\
&\qquad +C+ C_1 \left(\|z_2(\theta_t\omega)\|_{2}^2+\|z_1(\theta_t\omega)\|_{2}^2+\|z_1(\theta_t\omega)\|_{p_1+1}^{p_1+1}\right)\\
&\leq (\delta_4+\delta_7)\int_D(1+|u_1|)|u_2|~\txtd x+\int_D\delta_7|u_1||z_2(\theta_t\omega)|~\txtd x\\
&\qquad +\int_D \delta_4\frac{3}{2}(|u_1|+|u_1|^{p_1+1})~\txtd x+\int_D\delta_4|u_2||z_1(\theta_t\omega)|~\txtd x\\
&\qquad +C+ C_1 \left(\|z_2(\theta_t\omega)\|_{2}^2+\|z_1(\theta_t\omega)\|_{2}^2+\|z_1(\theta_t\omega)\|_{p_1+1}^{p_1+1}\right)\\
&\leq  \frac{\delta}{16}\|u_2\|_{2}^2+C_2\int_D(1+|u_1|)^2~\txtd x+\int_D\delta_7|u_1||z_2(\theta_t\omega)|~\txtd x\\
&\qquad +\int_D \delta_4\frac{3}{2}(|u_1|+|u_1|^{p_1+1})~\txtd x+\int_D\delta_4|u_2||z_1(\theta_t\omega)|~\txtd x\\
&\qquad +C+C_1 \left(\|z_2(\theta_t\omega)\|_{2}^2+\|z_1(\theta_t\omega)\|_{2}^2+\|z_1(\theta_t\omega)\|_{p_1+1}^{p_1+1}\right)\\
&= \frac{\delta}{16}\|u_2\|_{2}^2+\delta_4\frac{3}{2}\int_D \left(|u_1|+|u_1|^{p_1+1}+C_2(1+|u_1|)^2\right)~\txtd x\\
&\qquad +\int_D\delta_7|u_1||z_2(\theta_t\omega)|~\txtd x +\int_D\delta_4|u_2||z_1(\theta_t\omega)|~\txtd x\\
&\qquad +C+ C_1 \left(\|z_2(\theta_t\omega)\|_{2}^2+\|z_1(\theta_t\omega)\|_{2}^2+\|z_1(\theta_t\omega)\|_{p_1+1}^{p_1+1}\right)\\
&\leq \frac{\delta}{16}\|u_2\|_{2}^2+C_2\int_D(1+|u_1|^q)~\txtd x+\frac{\delta_1}{8}\|u_1\|_{2}^2+\frac{\delta}{16}\|u_2\|_{2}^2\\
&\qquad +C+ C_1\left(\|z_2(\theta_t\omega)\|_{2}^2+\|z_1(\theta_t\omega)\|_{2}^2+\|z_1(\theta_t\omega)\|_{p_1+1}^{p_1+1}\right),
\end{align*} 
where we have used that for $q=\max\{p_1+1,2\}<p$ there exists a constant $C_2$ such that 
\begin{equation}
C_1\left(|\xi|+|\xi|^{p_1+1}+C(1+|\xi|)^2\right)\leq C_2(|\xi|^q+1),~~~ \text{for all }\xi\in \mathbb R. 
\end{equation}
Thus, 
\begin{align*}
&I_2+I_3\\
&\leq \frac{\delta}{8}\|u_2\|_{2}^2+\frac{\delta_1}{8}\|u_1\|_{2}^2+\frac{\delta_1}{4}\|u_1\|_{p}^p\\
&\qquad+C+C_1 \left(\|z_2(\theta_t\omega)\|_{2}^2+\|z_1(\theta_t\omega)\|_{2}^2+\|z_1(\theta_t\omega)\|_{p_1+1}^{p_1+1}\right)\\
&\leq \frac{\delta}{4}\|v_2\|_{2}^2+\frac{\delta_1 3}{8}\|u_1\|_{p}^p+C\\
&\qquad +C_1 \left(\|z_2(\theta_t\omega)\|_{2}^2+\|z_1(\theta_t\omega)\|_{2}^2+\|z_1(\theta_t\omega)\|_{p_1+1}^{p_1+1}\right).
\end{align*}
Hence, in total we obtain 
\begin{align}\label{eqn:pandv}
&\frac{1}{2}\frac{\txtd }{\txtd t}(\|v_1\|^2_{2}+\|v_2\|^2_{2})
\nonumber\\&\leq -d\|\nabla v_1\|^2_{2}-\frac{\delta_1}{2}\|u_1\|_{p}^p-\delta \|v_2\|_{2}^2+\frac{\delta}{4}\|v_2\|_{2}^2+\frac{\delta_1 3}{8}\|u_1\|_{p}^p\nonumber\\
&\qquad{}+C+ C_1 \left(\|z_2(\theta_t\omega)\|_{2}^2+\|z_1(\theta_t\omega)\|_{2}^2+\|z_1(\theta_t\omega)\|_{p_1+1}^{p_1+1}+\|z_1(\theta_t\omega)\|_{p}^p\right)\nonumber\\
&= -d\|\nabla v_1\|^2_{2}-\frac{\delta_1}{8}\|u_1\|_{p}^p-\frac{3\delta }{4}\|v_2\|_{2}^2\nonumber\\
&\qquad{}+C+ C_1 \left(\|z_2(\theta_t\omega)\|_{2}^2+\|z_1(\theta_t\omega)\|_{2}^2+\|z_1(\theta_t\omega)\|_{p_1+1}^{p_1+1}+\|z_1(\theta_t\omega)\|_{p}^p\right)\nonumber\\
&\leq-\frac{d}{2}\|\nabla v_1\|^2_{2} -\frac{d}{2c}\|v_1\|^2_{2}-\frac{3\delta}{4}\|v_2\|_{2}^2\nonumber\\
&\qquad+C+ C_1 \left(\|z_2(\theta_t\omega)\|_{2}^2+\|z_1(\theta_t\omega)\|_{p}^p\right)
\end{align}
and thus 
\begin{equation}
\frac{\txtd }{\txtd t}(\|v_1\|^2_{2}+\|v_2\|^2_{2})\leq -C_2\left(\|v_1\|^2_{2}+\|v_2\|_{2}^2\right)+C+ C_1 \left(\|z_2(\theta_t\omega)\|_{2}^2+\|z_1(\theta_t\omega)\|_{p}^p\right).
\end{equation}
Now, applying Gronwall's inequality we obtain 
\begin{align}\label{eqn:2norm}
&\|v_1\|^2_{2}+\|v_2\|^2_{2}\nonumber\\&\leq \left(\|v_1^0\|^2_{2}+\|v_2^0\|^2_{2}\right)\e{-C_2t}+C_3\left(1-\e{-C_2t}\right)\nonumber\\&\qquad{}+C_1\int_0^t\e{-C_2(t-s)}\left(\|z_2(\theta_s\omega)\|_{2}^2+\|z_1(\theta_s\omega)\|_{p}^p\right)~\txtd s\nonumber\\
&\leq \left(\|v_1^0\|^2_{2}+\|v_2^0\|^2_{2}\right)\e{-C_2t}+C_3\nonumber\\&\qquad{}+ C_1\int_0^t\e{-C_2(t-s)}\left(\|z_2(\theta_s\omega)\|_{2}^2+\|z_1(\theta_s\omega)\|_{p}^p\right)~\txtd s.
\end{align}
We replace $\omega$ by $\theta_{-t}\omega$ (note the $\mathbb P$-preserving property of the MDS) and carry out a change of variables
\begin{eqnarray*}
&&\|v_1(t,\theta_{-t}\omega,v_1^0(\theta_{-t}\omega))\|^2_{2}+\|v_2(t,\theta_{-t}\omega,v_2^0(\theta_{-t}\omega))\|^2_{2}\\&&\leq\left(\|v_1^0(\theta_{-t}\omega)\|^2_{2}+\|v_2^0(\theta_{-t}\omega)\|^2_{2}\right)\e{-C_2t}+C_3\\&&\qquad{}+C_1\int_0^t\e{-C_2(t-s)}\left(\|z_2(\theta_{s-t}\omega)\|_{2}^2+\|z_1(\theta_{s-t}\omega)\|_{p}^p\right)~\txtd s
\\&&\leq\left(\|v_1^0(\theta_{-t}\omega)\|^2_{2}+\|v_2^0(\theta_{-t}\omega)\|^2_{2}\right)\e{-C_2t}+C_3\\&&\qquad{}+C_1\int_{-t}^0\e{C_2s}\left(\|z_2(\theta_s\omega)\|_{2}^2+\|z_1(\theta_s\omega)\|_{p}^p\right)~\txtd s.
\end{eqnarray*}
Now let $\cD\in \cT$ be arbitrary and $(u_1^0,u_2^0)(\theta_{-t}\omega)\in \cD(\theta_{-t}\omega)$. Then
\begin{align*}
&\|\varphi(t,\theta_{-t}\omega,(u_1^0,u_2^0)(\theta_{-t}\omega))\|_\mathbb H^2\\
&= \|v_1(t,\theta_{-t}\omega,u_1^0(\theta_{-t}\omega)-z_1(\theta_{-t}\omega))+z_1(\omega)\|_2^2\\
&\qquad +\|v_2(t,\theta_{-t}\omega,u_2^0(\theta_{-t}\omega)-z_2(\theta_{-t}\omega))+z_2(\omega)\|_2^2\\
&\leq 2\|v_1(t,\theta_{-t}\omega,u_1^0(\theta_{-t}\omega)-z_1(\theta_{-t}\omega))\|_2^2+2\|z_1(\omega)\|_2^2\\
&\qquad+2\|v_2(t,\theta_{-t}\omega,u_2^0(\theta_{-t}\omega)-z_2(\theta_{-t}\omega))\|_2^2 +2\|z_2(\omega)\|_2^2\\
&\leq 2\left(\|u_1^0(\theta_{-t}\omega)-z_1(\theta_{-t}\omega)\|^2_{2}+\|u_2^0(\theta_{-t}\omega)-z_2(\theta_{-t}\omega)\|^2_{2}\right)\e{-C_2t}\\
&\qquad{}+2C_3+2 C_1\int_{-t}^0\e{C_2s}\left(\|z_2(\theta_s\omega)\|_{2}^2+\|z_1(\theta_s\omega)\|_{p}^p\right)~\txtd s\\
&\qquad +2\|z_1(\omega)\|_2^2+2\|z_2(\omega)\|_2^2\\
&\leq 4\left(\|u_1^0(\theta_{-t}\omega)\|_2^2+\|z_1(\theta_{-t}\omega)\|^2_{2}+\|u_2^0(\theta_{-t}\omega)\|_2^2+\|z_2(\theta_{-t}\omega)\|^2_{2}\right)\e{-C_2t}\\
&\qquad{}+2C_3+2C_1\int_{-t}^0\e{C_2s}\left(\|z_2(\theta_s\omega)\|_{2}^2+\|z_1(\theta_s\omega)\|_{p}^p\right)~\txtd s\\
&\qquad +2\|z_1(\omega)\|_2^2+2\|z_2(\omega)\|_2^2.
\end{align*}
Since $(u_1^0,u_2^0)(\theta_{-t}\omega)\in \cD(\theta_{-t}\omega)$ and  since $\|z_1(\omega)\|_p^p$ ($p\geq 1$), $\|z_2(\omega)\|_2^2$ are tempered random variables, we have 
\begin{align*}
&\limsup_{t\rightarrow \infty}\left(\|u_1^0(\theta_{-t}\omega)\|_2^2+\|z_1(\theta_{-t}\omega)\|^2_{2}\right. ...\\
&\qquad \qquad \left.+\|u_2^0(\theta_{-t}\omega)\|_2^2+\|z_2(\theta_{-t}\omega)\|^2_{2}\right)\e{-C_2t}=0.
\end{align*}
Hence,
\begin{align}
&\limsup_{t\rightarrow \infty} \|\varphi(t,\theta_{-t}\omega,(u_1^0,u_2^0)(\theta_{-t}\omega))\|_\mathbb H^2\nonumber\\& \leq 2C_3+2C_1\int_{-\infty}^0\e{C_2s}\left(\|z_2(\theta_s\omega)\|_{2}^2+\|z_1(\theta_s\omega)\|_{p}^p\right)~\txtd s\nonumber \\&\qquad +2\|z_1(\omega)\|_2^2+2\|z_2(\omega)\|_2^2\nonumber\label{rho}\\
&=:\rho(\omega).
\end{align}
Due to the temperedness of $\|z_1(\omega)\|_p^p$ for $p\geq 1$ and $\|z_2(\omega)\|_2^2$, the improper integral above exists and $\rho(\omega)>0$ is a $\omega$-dependent constant. As described in Remark \ref{rem:abs}, we can define for some $\epsilon>0$ 
\begin{equation*}
\mathcal B(\omega)=B(0,\rho(\omega)+\epsilon).
\end{equation*}
Then $\cB=\{\cB(\omega)\}_\omega\in \cT$ is a $\cT$-absorbing set for the RDS $\varphi$ with finite absorption time $t_\cT(\omega)=\sup_{\cD\in \cT}t_\cD(\omega)$. 
\end{proof}

The random radius $\rho(\omega)$ depends on the restrictions imposed on the non-linearity 
and the noise. These were heavily used in Lemma~\ref{lem:abs} in order to derive the 
expression~\ref{rho} for $\rho(\omega)$. Regarding the structure of $\rho(\omega)$ we infer 
by Lemma~\ref{lem:temp} that $\rho(\omega)$ is tempered. Although we have now shown the 
existence of a bounded $\cT$-absorbing set for the RDS at hand, we need further steps. To 
show existence of a random attractor, we would like to make use of Theorem \ref{thm:atthm}, 
i.e., we have to show existence of a \textit{compact} $\cT$-absorbing set. This will be the 
goal of the next subsection. 

\subsection{Compact absorbing set}\label{subsec:compact}

The classical strategy to find a compact absorbing set in $L^2(D)$ for a reaction-diffusion 
equation is the following: Firstly, one needs to find an absorbing set in $L^2(D)$. Secondly, 
this set is used to find an absorbing set in $H^1(D)$ and due to compact embedding this automatically 
defines a compact absorbing set in $L^2(D)$. In our setting the construction of an absorbing set in 
$H^1(D)$ is more complicated as the regularizing effect of the Laplacian is missing in the second 
component of \eqref{eqn:eqv}. That is solutions with initial conditions in $L^2(D)$ will in general 
only belong to $L^2(D)$ and not to $H^1(D)$. To overcome this difficulty, we split the solution 
of the second component into two terms: one which is regular enough, in the sense that it 
belongs to $H^1(D)$ and the another one which asymptotically tends to zero. This splitting 
method has been used by several authors in the context of partly dissipative systems, see for 
instance \cite{Mar,Wang2}. Let us now explain the strategy for our setting in more detail. 
We consider the equations
\begin{equation}
\frac{\txtd v_2^1(t)}{\txtd t}
=-\sigma(x)v_2^1(t)-g(x,v_1(t)+z_1(\theta_t\omega)),~~~~v_2^1(0)=0,\label{eqn:split1}
\end{equation}
and 
\begin{equation}
\frac{\txtd v_2^2}{\txtd t}=-\sigma(x)v_2^2,~~~v_2^2(0)=v_2^0,\label{eqn:split2}
\end{equation}
then $v_2=v_2^1+v_2^2$ solves \eqref{eqn:RDS2}. Note at this point that we associate 
the initial condition $v_2^0\in L^2(D)$ to the second part. Now, let $\cD=(\cD_1,\cD_2)
\in \cT$ be arbitrary and $u^0=(u_1^0,u_2^0)\in \cD$. Then 
\begin{align*}&\varphi(t,\theta_{-t}\omega,u^0(\theta_{-t}\omega))\\&
=(u_1(t,\theta_{-t}\omega,u_1^0(\theta_{-t}\omega)),u_2(t,\theta_{-t}\omega,u_2^0(\theta_{-t}\omega)))\\
&=\left(v_1(t,\theta_{-t}\omega, v_1^0(\theta_{-t}\omega))+z_1(\omega),\right.\\
&\qquad \left.
v_2^1(t,\theta_{-t}\omega, v_2^0(\theta_{-t}\omega))+v_2^2(t,\theta_{-t}\omega, v_2^0(\theta_{-t}\omega))+z_2(\omega)\right)\\
&=\left(v_1(t,\theta_{-t}\omega, v_1^0(\theta_{-t}\omega))+z_1(\omega),\right.\\
&\qquad \left. v_2^1(t,\theta_{-t}\omega, 0)+z_2(\omega)\right)+\left(0,v_2^2(t,\theta_{-t}\omega, v_2^0(\theta_{-t}\omega))\right)\\
&=:\varphi_1(t,\theta_{-t}\omega,v_1^0(\theta_{-t}\omega))+\varphi_2(t,\theta_{-t}\omega,v_2^0(\theta_{-t}\omega))
\end{align*}
If we can show that for a certain $t^*\geq t_\cD(\omega)$ there exist tempered random variables $\rho_1(\omega)$, $\rho_2(\omega)$ such that 
\begin{align}\|v_1(t^*,\theta_{-t^*}\omega,v_1^0(\theta_{-t^*}\omega))+z_1(\omega)\|_{H^1(D)}<&\rho_1(\omega),\label{eqn:1}\\
\|v_2^1(t^*,\theta_{-t^*}\omega,0)+z_2(\omega)\|_{H^1(D)}<&\rho_2(\omega),\label{eqn:2}
\end{align}
then, because of compact embedding, we know that $\varphi_1(t^*,\theta_{-t^*}\omega, \cD_1(\theta_{-t^*}\omega))$ is a 
compact set in $\mathbb H$. If, furthermore
\begin{equation}\lim_{t\rightarrow \infty}\|v_2^2(t,\theta_{-t}\omega, v_2^0(\theta_{-t}\omega))\|_{2} =0,\label{eqn:3}
\end{equation}
then $\varphi_2(t,\theta_{-t}\omega,\cD_2(\theta_{-t}\omega))$ can be regarded as a (random) bounded perturbation and $\overline{\varphi(t,\theta_{-t}\omega,\cD(\theta_{-t}\omega))}$ is compact in $\mathbb H$ as well, see \cite[Theorem 2.1]{Temam}. 
Then, 
\begin{equation}\label{eqn:cas}\overline{\varphi(t^*,\theta_{-t^*}\omega,\cB(\theta_{-t^*}\omega))}
\end{equation}
is a compact absorbing set for the RDS $\varphi$. We will now prove the necessary estimates \eqref{eqn:1}-\eqref{eqn:3}. 

\begin{lem}
Let Assumptions \ref{ass:1} and \ref{ass:2} hold. Let $\cD_2\subset L^2(D)$ be tempered and $u_2^0\in \cD_2$. Then
\begin{align*}
\lim_{t\rightarrow \infty}\|v_2^2(t,\theta_{-t}\omega,v_2^0(\theta_{-t}\omega))\|_{2}^2=0.
\end{align*}
\end{lem}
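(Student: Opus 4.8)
The plan is to exploit that equation~\eqref{eqn:split2} is a \emph{linear}, spatially decoupled ordinary differential equation in $t$ which can be solved explicitly pointwise in $x\in D$, and which carries no noise, so that the only $\omega$-dependence enters through the pulled-back initial datum. First I would write down the solution
\benn
v_2^2(t,\theta_{-t}\omega,v_2^0(\theta_{-t}\omega))=\e{-t\sigma(x)}\,v_2^0(\theta_{-t}\omega),
\eenn
and invoke the lower bound $\sigma(x)\geq\delta>0$ from~\eqref{eqn:condsi} to obtain the decay estimate
\benn
\|v_2^2(t,\theta_{-t}\omega,v_2^0(\theta_{-t}\omega))\|_2^2
=\int_D \e{-2t\sigma(x)}\,|v_2^0(\theta_{-t}\omega)|^2~\txtd x
\leq \e{-2\delta t}\,\|v_2^0(\theta_{-t}\omega)\|_2^2.
\eenn

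Recalling from the Doss--Sussmann transformation that $v_2^0=u_2^0-z_2(\omega)$, the pulled-back initial condition reads $v_2^0(\theta_{-t}\omega)=u_2^0(\theta_{-t}\omega)-z_2(\theta_{-t}\omega)$. Using the elementary inequality $\|a-b\|_2^2\leq 2\|a\|_2^2+2\|b\|_2^2$ I would then split
\benn
\|v_2^2(t,\theta_{-t}\omega,v_2^0(\theta_{-t}\omega))\|_2^2
\leq 2\,\e{-2\delta t}\|u_2^0(\theta_{-t}\omega)\|_2^2+2\,\e{-2\delta t}\|z_2(\theta_{-t}\omega)\|_2^2.
\eenn

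It remains to let $t\to\infty$ and show that both summands vanish. For the first term, since $u_2^0\in\cD_2$ with $\cD_2$ tempered, we have $\|u_2^0(\theta_{-t}\omega)\|_2\leq d(\cD_2(\theta_{-t}\omega))$, and temperedness yields $\e{-\beta t}d(\cD_2(\theta_{-t}\omega))\to 0$ for every $\beta>0$; choosing $\beta=2\delta$ annihilates $2\,\e{-2\delta t}\|u_2^0(\theta_{-t}\omega)\|_2^2$. For the second term, Lemma~\ref{lem:temp} guarantees that $\|z_2(\omega)\|_2^2$ is a tempered random variable, so $\|z_2(\theta_{-t}\omega)\|_2^2$ grows sub-exponentially and is again dominated by $\e{2\delta t}$. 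Hence both terms tend to zero and the claim follows. There is essentially no analytic obstacle here, as the equation is explicitly integrable; the only point requiring care is to match the deterministic decay rate $2\delta$ against the sub-exponential growth of the random initial data, which is exactly what the temperedness of $\cD_2$ together with the conclusion of Lemma~\ref{lem:temp} provide.
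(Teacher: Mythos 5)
Your proof is correct and follows essentially the same route as the paper: solve the linear ODE explicitly, use $\sigma(x)\geq\delta$ to extract exponential decay, split $v_2^0=u_2^0-z_2$, and kill both terms via temperedness of $\cD_2$ and of $\|z_2(\omega)\|_2^2$. The only difference is cosmetic bookkeeping of the constants ($2\delta$ versus $\delta$ in the decay rate and the factor $2$ from the splitting), which does not affect the argument.
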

\begin{proof}
The solution to \eqref{eqn:split2} is given by 
$$v_2^2(t)=v_2^0\e{-\sigma(x)t}$$
and thus 
\begin{align*}
&\lim_{t\rightarrow\infty}\|v_2^2(t,\theta_{-t}\omega,v_2^0(\theta_{-t}\omega))\|_{2}^2\\&=\lim_{t\rightarrow\infty}\left\|v_2^0(\theta_{-t}\omega)\e{-\sigma(x)t}\right\|_2^2\\
&\leq \lim_{t\rightarrow\infty}\|v_2^0(\theta_{-t}\omega)\|_2^2\e{-\delta t}\\
&\leq\lim_{t\rightarrow\infty}\left( \|u_2^0(\theta_{-t}\omega)\|_2^2+\|z_2(\theta_{-t}\omega)\|_2^2\right)\e{-\delta t}=0,
\end{align*}
as $u_2^0\in \cD_2$ and $\|z_2(\omega)\|_2^2$ is a tempered random variable. 
\end{proof}
 
We now prove boundedness of $v_1$ and $v_2^1$ in $H^1(D)$. Therefore we need some auxiliary estimates.  First, let us derive uniform estimates for $u_1\in L^p(D)$ and for $v_1\in H^1(D)$.
 
\begin{lem}\label{lem:aux1} Let Assumptions \ref{ass:1} and \ref{ass:2}  hold. Let $\cD_1\subset L^2(D)$ be tempered and $u_1^0\in \cD_2$. Assume $t\geq 0$, $r>0$, then 
\begin{align}\label{eqn:pnorm}
&\int_{t}^{t+r}\|u_1(s,\omega, u_1^0(\omega))\|_p^p~\txtd s\nonumber\\&\leq Cr+C_1\int_t^{t+r}\left(\|z_2(\theta_s\omega)\|_{2}^2+\|z_1(\theta_s\omega)\|_{p}^p\right)\txtd s\nonumber\\
&\qquad+\|v_1(t,\omega,v_1^0(\omega))\|^2_2+\|v_2(t,\omega,v_2^0(\omega))\|_2^2,
\end{align}
\begin{align}\label{eqn:uninabla}
&\int_{t}^{t+r}\|\nabla v_1(s,\omega, v_1^0(\omega))\|_2^2~\txtd s\nonumber\\&\leq Cr+C_1\int_t^{t+r}\left(\|z_2(\theta_s\omega)\|_{2}^2+\|z_1(\theta_s\omega)\|_{p}^p\right)\txtd s\nonumber\\
&\qquad +\|v_1(t,\omega,v_1^0(\omega))\|^2_2+\|v_2(t,\omega,v_2^0(\omega))\|_2^2,
\end{align}
where $C,C_1$ are deterministic constants. 
\end{lem}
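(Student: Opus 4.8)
The plan is to re-use the differential inequality already obtained inside the proof of Lemma~\ref{lem:abs}, but now \emph{integrated} over the window $[t,t+r]$ instead of being fed through Gronwall. Recall that the computation leading to~\eqref{eqn:pandv} produced, path-wise for every $\omega$, the intermediate estimate (the ``$=$'' line of~\eqref{eqn:pandv}, i.e.\ before the Poincar\'e step was applied)
\begin{align*}
\frac{1}{2}\frac{\txtd}{\txtd t}\bigl(\|v_1\|_2^2+\|v_2\|_2^2\bigr)
&\leq -d\|\nabla v_1\|_2^2-\frac{\delta_1}{8}\|u_1\|_p^p-\frac{3\delta}{4}\|v_2\|_2^2+C\\
&\quad+C_1\bigl(\|z_2(\theta_t\omega)\|_2^2+\|z_1(\theta_t\omega)\|_2^2+\|z_1(\theta_t\omega)\|_{p_1+1}^{p_1+1}+\|z_1(\theta_t\omega)\|_p^p\bigr).
\end{align*}
Since $D$ is bounded, the embeddings $L^p(D)\hookrightarrow L^2(D)$ and $L^p(D)\hookrightarrow L^{p_1+1}(D)$ (valid because $2<p$ and $p_1+1<p$) together with the $\varepsilon$-Young inequality give $\|z_1\|_2^2+\|z_1\|_{p_1+1}^{p_1+1}\leq C+C_1\|z_1\|_p^p$; absorbing the extra constant into $C$, the noise contribution collapses to exactly $\|z_2(\theta_t\omega)\|_2^2+\|z_1(\theta_t\omega)\|_p^p$, which matches the right-hand sides of~\eqref{eqn:pnorm}--\eqref{eqn:uninabla}. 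This is the same consolidation already performed in the passage from the ``$=$'' line to the final line of~\eqref{eqn:pandv}.

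Next I would move the two nonnegative dissipative terms $d\|\nabla v_1\|_2^2$ and $\tfrac{\delta_1}{8}\|u_1\|_p^p$ to the left, discard the harmless $-\tfrac{3\delta}{4}\|v_2\|_2^2$, and integrate over $[t,t+r]$. The derivative term telescopes to $\tfrac12\bigl[(\|v_1\|_2^2+\|v_2\|_2^2)(t+r)-(\|v_1\|_2^2+\|v_2\|_2^2)(t)\bigr]$; dropping the nonnegative endpoint value at $t+r$ and retaining the value at $t$ yields
\begin{align*}
\frac{\delta_1}{8}\int_t^{t+r}\|u_1\|_p^p~\txtd s+d\int_t^{t+r}\|\nabla v_1\|_2^2~\txtd s
&\leq \tfrac12\bigl(\|v_1(t)\|_2^2+\|v_2(t)\|_2^2\bigr)+Cr\\
&\quad+C_1\int_t^{t+r}\bigl(\|z_2(\theta_s\omega)\|_2^2+\|z_1(\theta_s\omega)\|_p^p\bigr)~\txtd s.
\end{align*}
Because the two integrals on the left are each individually nonnegative, each is bounded by the entire right-hand side; dividing through by $\delta_1/8$ and by $d$ respectively, and renaming constants, gives precisely~\eqref{eqn:pnorm} and~\eqref{eqn:uninabla}.

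The points requiring care are bookkeeping rather than conceptual. First, one must apply the inequality along the actual cocycle trajectory started from $v_1^0(\omega)=u_1^0(\omega)-z_1(\omega)$, so that the endpoint terms genuinely read $\|v_1(t,\omega,v_1^0(\omega))\|_2^2+\|v_2(t,\omega,v_2^0(\omega))\|_2^2$; this is legitimate because~\eqref{eqn:eqv} is solved path-wise for every $\omega$. Second, the collapse of the three $z_1$-norms into a single $\|z_1\|_p^p$ relies on $p_1+1<p$ and $2<p$, which is exactly guaranteed by Assumption~\ref{ass:1}\ref{ass:f} and the condition $p>2$ in~\ref{ass:h}. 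I do not anticipate a genuine obstacle: the lemma is the ``dissipation integral'' that falls out for free once the $L^2$ a-priori bound of Lemma~\ref{lem:abs} is in hand, and its role is precisely to supply the integrated quantities needed for the uniform Gronwall argument that delivers the $H^1$-bounds in the subsequent step.
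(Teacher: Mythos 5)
Your proposal is correct and follows essentially the same route as the paper: both arguments return to the differential inequality established en route to \eqref{eqn:pandv} (retaining the dissipative terms $d\|\nabla v_1\|_2^2$ and a multiple of $\|u_1\|_p^p$), integrate over $[t,t+r]$, drop the nonnegative endpoint at $t+r$, and bound each of the two nonnegative integrals by the full right-hand side. Your explicit justification of collapsing $\|z_1\|_2^2+\|z_1\|_{p_1+1}^{p_1+1}$ into $C+C_1\|z_1\|_p^p$ via the embedding on the bounded domain and Young's inequality is exactly the consolidation the paper performs implicitly in the final line of \eqref{eqn:pandv}.
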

\begin{proof}
From \eqref{eqn:pandv} we can derive
\begin{align*}
&\frac{\txtd }{\txtd t}(\|v_1\|^2_{2}+\|v_2\|^2_{2})\\
&\leq -d\|\nabla v_1\|_2^2-\frac{\delta_1}{4}\|u_1\|_{p}^p+C+C_1 \left(\|z_2(\theta_t\omega)\|_{2}^2+\|z_1(\theta_t\omega)\|_{p}^p\right),
\end{align*}
and thus by integration
\begin{align*}
&d\int_{t}^{t+r}\| \nabla v_1(s,\omega,v_1^0(\omega))\|_2^2~\txtd s+\frac{\delta_1}{4}\int_{t}^{t+r}\|u_1(s,\omega,u_1^0(\omega))\|_p^p~\txtd s\\
&\leq Cr+C_1\int_t^{t+r}\left(\|z_2(\theta_s\omega)\|_{2}^2+\|z_1(\theta_s\omega)\|_{p}^p\right)\txtd s\\
&\qquad+\|v_1(t,\omega,v_1^0(\omega))\|^2_2+\|v_2(t,\omega,v_2^0(\omega))\|_2^2.
\end{align*}
The two statements of the lemma follow directly from this estimate.  
\end{proof}

\begin{lem}\label{lem:2p}
Let Assumptions  \ref{ass:1} and \ref{ass:2}  hold. Let $\cD_1\subset L^2(D)$ be tempered and $u_1^0\in \cD_1$. Assume  
$t\geq r$, then 
\begin{align}
&\int_{t}^{t+r} \|u_1(s,\omega,u_1^0(\omega))\|_{2p-2}^{2p-2}~\txtd s\nonumber \\
&\leq C_6r+\int_{t-r}^{t+r}C_2\|z_1(\theta_s\omega)\|_{p^2-p}^{p^2-p}+C_3\|z_2(\theta_s\omega)\|_2^2+C_4\|v_2(s,\omega,v_2^0(\omega))\|_2^2~\txtd s\nonumber\\
&\qquad+C_5\|v_1(t-r,\omega,v_1^0(\omega))\|^2_2+C_5\|v_2(t-r,\omega,v_2^0(\omega))\|_2^2,\label{eqn:some}
\end{align}
where $C_2,C_3,C_4,C_5,C_6$ are deterministic constants.  
\end{lem}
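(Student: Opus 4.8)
The plan is to obtain the higher integrability of $u_1$ by testing the first component \eqref{eqn:RDS1} with $|v_1|^{p-2}v_1$ (rather than with $v_1$ as in the $L^2$ estimate), which converts the coercivity \eqref{eqn:condh} of $h$ into control of $\|u_1\|_{2p-2}^{2p-2}$. Multiplying \eqref{eqn:RDS1} by $|v_1|^{p-2}v_1$ and integrating over $D$, the time term is the clean derivative $\frac1p\frac{\txtd}{\txtd t}\|v_1\|_p^p$ and the diffusion term equals $-d(p-1)\int_D|v_1|^{p-2}|\nabla v_1|^2\le 0$, so it can simply be discarded. For the reaction term $-\int_D h(x,u_1)|v_1|^{p-2}v_1$ I would replace $|v_1|^{p-2}v_1$ by $|u_1|^{p-2}u_1$ and use $h(x,u_1)u_1\,|u_1|^{p-2}\ge \delta_1|u_1|^{2p-2}-\delta_3|u_1|^{p-2}$ from \eqref{eqn:condh}, controlling the resulting correction through the elementary convexity bound $\big||u_1|^{p-2}u_1-|v_1|^{p-2}v_1\big|\le C|z_1(\theta_t\omega)|\big(|u_1|^{p-2}+|v_1|^{p-2}\big)$ together with the growth \eqref{eqn:condhnew}. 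The term $-\int_D f\,|v_1|^{p-2}v_1$ is handled with the growth bound \eqref{eqn:condf}, the splitting $u_2=v_2+z_2(\theta_t\omega)$, and the $\varepsilon$-Young inequality.

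After these estimates, every power of $|u_1|$ strictly below the critical exponent $2p-2$ (namely $p-2$, $p_1+p-1$, and those produced by the convexity correction) is absorbed into a small multiple of $\|u_1\|_{2p-2}^{2p-2}$ plus a constant, the terms carrying $|v_2|$ produce $\|v_2\|_2^2$, and the terms carrying $z_{1,2}(\theta_t\omega)$ produce powers of $\|z_1(\theta_t\omega)\|$ and $\|z_2(\theta_t\omega)\|_2^2$. Since $D$ is bounded, the embedding $L^a(D)\hookrightarrow L^b(D)$ for $b\le a$ together with one more application of Young's inequality lets me dominate all the resulting powers of $\|z_1\|$ (whose exponents are all at most $p^2-p$) by $1+\|z_1(\theta_t\omega)\|_{p^2-p}^{p^2-p}$. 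This yields a differential inequality of the form
$$\frac{\txtd}{\txtd t}\Big(\tfrac1p\|v_1\|_p^p\Big)+c\,\|u_1\|_{2p-2}^{2p-2}\le C+C_1\Big(\|z_1(\theta_t\omega)\|_{p^2-p}^{p^2-p}+\|z_2(\theta_t\omega)\|_2^2+\|v_2\|_2^2\Big),$$
with $c>0$. Integrating over $[t,t+r]$ and dropping $\tfrac1p\|v_1(t+r)\|_p^p\ge 0$ bounds $c\int_t^{t+r}\|u_1\|_{2p-2}^{2p-2}\,\txtd s$ by $\tfrac1p\|v_1(t)\|_p^p$ plus the integral of the right-hand side over $[t,t+r]$.

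It remains to control the pointwise quantity $\|v_1(t)\|_p^p$ by integral data, which is exactly the role of the hypothesis $t\ge r$ and of the backward interval $[t-r,t]$. Dropping the dissipative term, the same differential inequality reads $y'\le h$ with $y=\tfrac1p\|v_1\|_p^p$ and $g\equiv 0$, so the uniform Gronwall lemma on $[t-r,t]$ gives $y(t)\le \tfrac1r\int_{t-r}^t y\,\txtd s+\int_{t-r}^t h\,\txtd s$. I would then bound $\int_{t-r}^t\|v_1\|_p^p$ by $\int_{t-r}^t\|u_1\|_p^p+\int_{t-r}^t\|z_1\|_p^p$ via Minkowski's inequality and invoke \eqref{eqn:pnorm} of Lemma~\ref{lem:aux1} with initial time $t-r$ and length $r$; this is precisely where the boundary terms $\|v_1(t-r)\|_2^2$ and $\|v_2(t-r)\|_2^2$ enter, and it forces $t\ge r$ so that $t-r\ge 0$. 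Collecting the contributions from $[t-r,t]$ and from $[t,t+r]$ produces the single integral over $[t-r,t+r]$ in \eqref{eqn:some}, after once more absorbing the leftover $\|z_1\|_p^p$ into $\|z_1\|_{p^2-p}^{p^2-p}$ using $p\le p^2-p$.

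The main obstacle is the coercivity step: because $h$ is evaluated at $u_1$ while the test function carries the power of $v_1$, the clean negative term $-\delta_1\|u_1\|_{2p-2}^{2p-2}$ only appears after replacing $|v_1|^{p-2}v_1$ by $|u_1|^{p-2}u_1$, and the correction $\int_D h(x,u_1)\big(|u_1|^{p-2}u_1-|v_1|^{p-2}v_1\big)$ must then be shown—via the convexity bound above and \eqref{eqn:condhnew}—to be of strictly lower order in $u_1$, so that it is absorbed by a small fraction of $\|u_1\|_{2p-2}^{2p-2}$, at the cost of the high but harmless exponent $p^2-p$ on $z_1(\theta_t\omega)$. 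The second, more structural, point is recognizing that a pointwise-in-time $L^p$ bound cannot be read off directly and must instead be generated from integral information through the uniform Gronwall lemma combined with Lemma~\ref{lem:aux1}; this is what dictates the two-sided interval $[t-r,t+r]$ and the boundary data at $t-r$ appearing in the statement.
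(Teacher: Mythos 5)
Your proposal is correct and follows essentially the same route as the paper: testing \eqref{eqn:RDS1} with $|v_1|^{p-2}v_1$, discarding the diffusion term, extracting a dissipative $(2p-2)$-norm from the coercivity of $h$, and then controlling the boundary term $\|v_1(t)\|_p^p$ via the uniform Gronwall lemma combined with Lemma~\ref{lem:aux1} on $[t-r,t]$, which is exactly what forces $t\geq r$ and produces the data at $t-r$. The only (immaterial) deviation is that you convert the test-function powers from $v_1$ to $u_1$ at the start to obtain $\|u_1\|_{2p-2}^{2p-2}$ directly, whereas the paper keeps the coercivity in terms of $v_1$ (via $h(x,v_1+z_1)v_1\geq \frac{\delta_1}{2^p}|v_1|^p-C-C_1(|z_1|^2+|z_1|^p)$) and passes to $u_1$ only in the final step by adding $\int_t^{t+r}\|z_1(\theta_s\omega)\|_{2p-2}^{2p-2}\,\txtd s$.
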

\begin{proof}
Remember that $v_1$ satisfies equation \eqref{eqn:RDS1}. 
Multiplying this equation by $|v_1|^{p-2}v_1$ and integrating over $D$ yields
\begin{align*}
&\frac{1}{p}\frac{\txtd }{\txtd t}\int_D|v_1|^{p}~\txtd x\\&=d\int_D\Delta v_1(t)|v_1|^{p-2}v_1~\txtd x-\int_D h(x,v_1(t)+z_1(\theta_t\omega))|v_1|^{p-2}v_1~\txtd x\\
&\qquad{}-\int_D f(x,v_1(t)+z_1(\theta_t\omega),v_2(t)+z_2(\theta_t\omega))|v_1|^{p-2}v_1 ~\txtd x\\
&=  -d(p-1)\int_D |\nabla v_1|^2|v_1|^{p-2}~\txtd x-\int_D h(x,v_1(t)+z_1(\theta_t\omega))|v_1|^{p-2}v_1~\txtd x\\
&\qquad{}-\int_D f(x,v_1(t)+z_1(\theta_t\omega),v_2(t)+z_2(\theta_t\omega))|v_1|^{p-2}v_1 ~\txtd x\\
&\leq  -\int_D \left(\frac{\delta_1}{2^p}|v_1|^p-C-C_1(|z_1(\theta_t\omega)|^2+|z_1(\theta_t\omega)|^p)\right)|v_1|^{p-2}~\txtd x\\
&\qquad{}+\int_D |f(x,v_1(t)+z_1(\theta_t\omega),v_2(t)+z_2(\theta_t\omega))||v_1|^{p-2}v_1 ~\txtd x\\
&\leq  -\int_D \frac{\delta_1}{2^p}|v_1|^{2p-2}~\txtd x+C\int_D|v_1|^{p-2}~\txtd x\\
&\qquad + C_1\int_D (|z_1(\theta_t\omega)|^2+|z_1(\theta_t\omega)|^p)|v_1|^{p-2}~\txtd x\\
&\qquad+\int_D \delta_4 (1+|v_1+z_1(\theta_t\omega)|^{p_1}+|v_2+z_2(\theta_t\omega)|)|v_1|^{p-2}v_1 ~\txtd x\\ 
&\leq  -\int_D \frac{\delta_1}{2^p}|v_1|^{2p-2}~\txtd x+C\int_D|v_1|^{p-2}~\txtd x+C_1\int_D|v_1|^{p-1}~\txtd x\\
&\qquad +C_2\int_D(|z_1(\theta_t\omega)|^{2p-2}+|z_1(\theta_t\omega)|^{p^2-p}) ~\txtd x+\int_D \delta_4 \left(|v_1|^{p-1}...\right.\\
&\qquad \qquad +C_3\left(|v_1|^{p_1+p-1}+|z_1(\theta_t\omega)|^{p_1}|v_1|^{p-1}+|v_2||v_1|^{p-1}...\right.\\
&\qquad \qquad \left.\left.+|z_2(\theta_t\omega)||v_1|^{p-1}\right)\right) ~\txtd x\\
&\leq  -\int_D \frac{\delta_1}{2^p}|v_1|^{2p-2}~\txtd x+\frac{\delta_1}{2^p 4}\int_D|v_1|^{2p-2}~\txtd x+C_6\\
&\qquad+C_2\int_D(|z_1(\theta_t\omega)|^{2p-2}+|z_1(\theta_t\omega)|^{p^2-p}) ~\txtd x\\
&\qquad{}+\int_D C_3 (|z_1(\theta_t\omega)|^{p_1}|v_1|^{p-1}+|v_2||v_1|^{p-1}+|z_2(\theta_t\omega)||v_1|^{p-1}) ~\txtd x,
\end{align*}
where we have used condition \eqref{eqn:condf},  the relations $p-1,p-2,p_1+p-1<2p-2$ and the inequality 

\begin{equation*}
h(x,v_1+z_1)v_1\geq \frac{\delta_1}{2^p}|v_1|^p-C-C_1(|z_1|^2+|z_1|^p),
\end{equation*}
that can be proved by using conditions \eqref{eqn:condh} and \eqref{eqn:condhnew}
\begin{align*}
h(x,v_1+z_1)v_1&=h(x,v_1+z_1)(v_1+z_1)-h(x,v_1+z_1)z_1\\
&\geq \delta_1 |v_1+z_1|^p-\delta_3-|h(x,v_1+z_1)||z_1|\\
&\geq \delta_1|v_1+z_1|^p-\delta_3-(\delta_8+\delta_8|v_1+z_1|^{p-1})|z_1|\\
&\geq \delta_1|v_1+z_1|^p-C-C_1|z_1|^2-\delta_1/2|v_1+z_1|^p-C_2|z_1|^p\\
&=\frac{\delta_1}{2}|v_1+z_1|^p-C-C_1(|z_1|^2+|z_1|^p)\\
&\geq \frac{\delta_1}{2}||v_1|-|z_1||^p-C-C_1(|z_1|^2+|z_1|^p)\\
&\geq \frac{\delta_1}{2^p}|v_1|^p-C-C_1(|z_1|^2+|z_1|^p).
\end{align*}
Hence we have
\begin{align*}
&\frac{1}{p}\frac{\txtd }{\txtd t}\int_D|v_1|^{p}~\txtd x+\int_D  \frac{3}{4}\frac{\delta_1}{2^p}|v_1|^{2p-2}~\txtd x\\&\leq C_6+C_2\int_D(|z_1(\theta_t\omega)|^{2p-2}+|z_1(\theta_t\omega)|^{p^2-p}) ~\txtd x\\
&\qquad +\int_D C_3 (|z_1(\theta_t\omega)|^{p_1}+|v_2|+|z_2(\theta_t\omega)|)|v_1|^{p-1} ~\txtd x
\\&\leq C_6+C_2\int_D(|z_1(\theta_t\omega)|^{2p-2}+|z_1(\theta_t\omega)|^{p^2-p}) ~\txtd x+\int_D \frac{1}{4}\frac{\delta_1}{2^p} |v_1|^{2p-2}~\txtd x \\&\qquad{}+\int_D C_3 (|z_1(\theta_t\omega)|^{p_1}+|v_2|+|z_2(\theta_t\omega)|)^2 ~\txtd x
\end{align*}
and thus
\begin{align}\label{eqn:someeq}
&\frac{1}{p}\frac{\txtd }{\txtd t}\int_D|v_1|^{p}~\txtd x+\int_D \frac{1}{2}\frac{\delta_1}{2^p}|v_1|^{2p-2}~\txtd x\nonumber\\
&\leq C_6+C_2\int_D(|z_1(\theta_t\omega)|^{2p-2}+|z_1(\theta_t\omega)|^{p^2-p}) ~\txtd x \nonumber\\
&\qquad +\int_D C_3 (|z_1(\theta_t\omega)|^{2p_1}+|v_2(t)|^2+|z_2(\theta_t\omega)|^2 )~\txtd x.
\end{align}
We arrive at the following inequality 
\begin{equation}\label{eqn:someinequality}
\frac{1}{p}\frac{\txtd }{\txtd t} \|v_1\|_p^p+\frac{\delta_1}{2^{p+1}}\|v_1\|^{2p-2}_{2p-2}\leq C_6+C_2\|z_1(\theta_t\omega)\|_{p^2-p}^{p^2-p}+C_3\|z_2(\theta_t\omega)\|_2^2+C_3\|v_2\|_2^2
\end{equation}
and hence
\begin{equation}\label{eqn:ers}
\frac{\txtd }{\txtd t} \|v_1\|_p^p\leq C_6+C_2\|z_1(\theta_t\omega)\|_{p^2-p}^{p^2-p}+C_3\|z_2(\theta_t\omega)\|_2^2+C_3\|v_2\|_2^2-\frac{\delta_1}{2^{p+1}}\|v_1\|^{p}_{p}.
\end{equation}
With \eqref{eqn:pnorm} we have 
\begin{align*}
&\int_t^{t+r}\|v_1(s,\omega,v_1^0(\omega))\|_p^p~\txtd s\\
&=\int_t^{t+r}\|u_1(s,\omega,v_1^0(\omega))-z_1(\theta_s\omega)\|_p^p~\txtd s\\
&\leq Cr+C_1\int_t^{t+r}\left(\|z_2(\theta_s\omega)\|_{2}^2+\|z_1(\theta_s\omega)\|_{p}^p\right)\txtd s\\
&\qquad +C_2\|v_1(t,\omega,v_1^0(\omega))\|^2_2+C_2\|v_2(t,\omega,v_2^0(\omega))\|_2^2.
\end{align*}
Thus by applying the uniform Gronwall Lemma to~\eqref{eqn:ers} we have 
\begin{align}\label{eqn:eq10}
&\|v_1(t+r,\omega,v_1^0(\omega))\|_p^p\nonumber\\& \leq rC_6+\int_t^{t+r} C_2\|z_1(\theta_s\omega)\|_{p^2-p}^{p^2-p}+C_3\|z_2(\theta_s\omega)\|_2^2+C_4\|v_2(s,\omega,v_2^0(\omega))\|_2^2~\txtd s\nonumber\\&\qquad +C_5\|v_1(t,\omega,v_1^0(\omega))\|^2_2+C_5\|v_2(t,\omega,v_2^0(\omega))\|_2^2.
\end{align}
Now integrating \eqref{eqn:someinequality} between $t$ and $t+r$ yields
\begin{align*}
&\int_t^{t+r}\|v_1(s,\omega,v_1(\omega))\|^{2p-2}_{2p-2}~\txtd s\\
&\leq C_6r+\int_t^{t+r}C_2\|z_1(\theta_s\omega)\|_{p^2-p}^{p^2-p}+C_3\|z_2(\theta_s\omega)\|_2^2+C_3\|v_2(s,\omega,v_2^0(\omega))\|_2^2~\txtd s\\&\qquad +C\|v_1(t,\omega,v_1^0(\omega))\|_p^p
\end{align*}
and thus for $t\geq r$ using \eqref{eqn:eq10}
\begin{align*}
&\int_t^{t+r}\|v_1(s,\omega,v_1(\omega))\|^{2p-2}_{2p-2}~\txtd s\\
&\leq C_6r+\int_{t-r}^{t+r}C_2\|z_1(\theta_s\omega)\|_{p^2-p}^{p^2-p}+C_3\|z_2(\theta_s\omega)\|_2^2+C_4\|v_2(s,\omega,v_2^0(\omega))\|_2^2~\txtd s\\&\qquad +C_5\|v_1(t-r,\omega,v_1^0(\omega))\|^2_2+C_5\|v_2(t-r,\omega,v_2^0(\omega))\|_2^2.
\end{align*}
In total this leads to 
\begin{align*}
&\int_t^{t+r}\|u_1(s,\omega,v_1(\omega))\|^{2p-2}_{2p-2}~\txtd s\\&\leq C_6r+\int_{t-r}^{t+r}C_2\|z_1(\theta_s\omega)\|_{p^2-p}^{p^2-p}+C_3\|z_2(\theta_s\omega)\|_2^2+C_4\|v_2(s,\omega,v_2^0(\omega))\|_2^2~\txtd s\\&\qquad{}+C_5\|v_1(t-r,\omega,v_1^0(\omega))\|^2_2+C_5\|v_2(t-r,\omega,v_2^0(\omega))\|_2^2\\
&\qquad +\int_t^{t+r}\|z_1(\theta_s\omega)\|_{2p-2}^{2p-2}~\txtd s\\
&\leq C_6r+\int_{t-r}^{t+r}C_2\|z_1(\theta_s\omega)\|_{p^2-p}^{p^2-p}+C_3\|z_2(\theta_s\omega)\|_2^2+C_4\|v_2(s,\omega,v_2^0(\omega))\|_2^2~\txtd s\\&\qquad{}+C_5\|v_1(t-r,\omega,v_1^0(\omega))\|^2_2+C_5\|v_2(t-r,\omega,v_2^0(\omega))\|_2^2,
\end{align*}
and this finishes the proof.
\end{proof}

One can also use appropriate shifts within the integrals on the left hand sides in \eqref{eqn:pnorm}, \eqref{eqn:uninabla}, \eqref{eqn:some} to obtain simpler forms of the $\omega$-dependent constants on the right hand side, see for instance \cite[Lemma 4.3, 4.4]{Wang}. More precisely, in case of \eqref{eqn:pnorm} one can for instance obtain an estimate of the form 
\begin{align*}
\int\limits_{t}^{t+r}\|u_{1}(s,\theta_{-t-r}\omega,u^{0}_{1}(\theta_{-t-r}\omega))\|^p_{p} \leq c (1+\widetilde{\rho}(\omega)), 
\end{align*}
where $\tilde \rho(\omega)$ is a random constant.  Nevertheless such estimates hold for every $\omega$, independent of the shift that one inserts inside the integral on the left hand side. Without the appropriate shifts on the left hand sides, as in the lemmas above, the constants on the right hand sides depend on the shift. Next, we are going to show the boundedness of $v_1$ in $H^1(D)$. 

\begin{lem} \label{lem:HV}Let Assumptions 2.1 and 2.2 hold. Let $\cD=(\cD_1,\cD_2)\in \cT$ and $u^0\in \cD$. Assume  $t\geq t_\cD(\omega)+2r$ for some $r>0$ then 
\begin{equation}
\|\nabla v_1(t,\theta_{-t}\omega,v_1^0(\theta_{-t}\omega))\|_2^2\leq \rho_1(\omega),
\end{equation}
where $\rho_1(\omega)$ is a tempered random variable. 
\end{lem}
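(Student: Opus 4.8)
The plan is to derive the missing $H^1$-bound for $v_1$ by testing equation \eqref{eqn:RDS1} with $-\Delta v_1$. Multiplying \eqref{eqn:RDS1} by $-\Delta v_1$, integrating over $D$ and using the homogeneous Dirichlet condition yields
\begin{equation*}
\tfrac{1}{2}\frac{\txtd}{\txtd t}\|\nabla v_1\|_2^2 + d\|\Delta v_1\|_2^2
= \langle h(x,v_1+z_1(\theta_t\omega)),\Delta v_1\rangle
+ \langle f(x,v_1+z_1(\theta_t\omega),v_2+z_2(\theta_t\omega)),\Delta v_1\rangle .
\end{equation*}
This computation is formal and would be justified rigorously by a Galerkin/approximation argument exploiting the parabolic smoothing of $A$ in the first component, along the lines already used implicitly in Lemma~\ref{lem:2p}.

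Next I would estimate the right-hand side. By the $\varepsilon$-Young inequality both inner products are absorbed into $\tfrac{d}{2}\|\Delta v_1\|_2^2$ at the cost of $C\big(\|h(\cdot,v_1+z_1(\theta_t\omega))\|_2^2+\|f(\cdot,v_1+z_1(\theta_t\omega),v_2+z_2(\theta_t\omega))\|_2^2\big)$. The growth bound \eqref{eqn:condhnew} gives $\|h(\cdot,v_1+z_1(\theta_t\omega))\|_2^2\leq C(1+\|u_1\|_{2p-2}^{2p-2}+\|z_1(\theta_t\omega)\|_{2p-2}^{2p-2})$, while \eqref{eqn:condf}, the relation $2p_1<2p-2$ and the embedding of Lebesgue spaces on the bounded domain $D$ give $\|f\|_2^2\leq C(1+\|u_1\|_{2p-2}^{2p-2}+\|v_2\|_2^2+\|z_2(\theta_t\omega)\|_2^2)$. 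Discarding the nonnegative term $\tfrac{d}{2}\|\Delta v_1\|_2^2$ then leaves a differential inequality of pure-forcing type,
\begin{equation*}
\frac{\txtd}{\txtd t}\|\nabla v_1\|_2^2 \leq \tilde h(t), \qquad
\tilde h(t):= C+C_1\big(\|u_1\|_{2p-2}^{2p-2}+\|v_2\|_2^2+\|z_1(\theta_t\omega)\|_{2p-2}^{2p-2}+\|z_2(\theta_t\omega)\|_2^2\big),
\end{equation*}
i.e.\ with vanishing multiplier $g\equiv 0$ in the Uniform Gronwall Lemma.

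I would then apply the Uniform Gronwall Lemma to $y=\|\nabla v_1\|_2^2$ on the interval $[t-r,t]$. The hypothesis $\int_{t-r}^{t}\|\nabla v_1\|_2^2\,\txtd s\leq a_3$ is exactly \eqref{eqn:uninabla}, and $\int_{t-r}^{t}\tilde h\,\txtd s\leq a_2$ follows from Lemma~\ref{lem:2p} (which bounds $\int\|u_1\|_{2p-2}^{2p-2}$), from the pointwise $L^2$-estimate \eqref{eqn:2norm} applied to $\|v_2\|_2^2$, and from the temperedness of the noise terms established in Lemma~\ref{lem:temp} and Remark~\ref{rem:nabla}. Since $g\equiv 0$, this gives $\|\nabla v_1(t)\|_2^2\leq a_3/r+a_2$, whose right-hand side is a finite combination of integrals of the noise over the window $[t-2r,t]$ together with the $L^2$-norms $\|v_1\|_2^2,\|v_2\|_2^2$ evaluated at the earlier times $t-r$ and $t-2r$. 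To obtain the stated pullback form I would replace $\omega$ by $\theta_{-t}\omega$ and change variables exactly as in the proof of Lemma~\ref{lem:abs}: the noise integrals become integrals over the fixed window $[-2r,0]$ of $\theta_\cdot\omega$, hence finite and tempered, while the boundary terms $\|v(t-r,\theta_{-t}\omega,\cdot)\|_{\mathbb H}^2$ and $\|v(t-2r,\theta_{-t}\omega,\cdot)\|_{\mathbb H}^2$ are controlled through \eqref{eqn:2norm} in its pullback form. Collecting these pieces defines $\rho_1(\omega)$ as a finite sum of tempered random variables, hence tempered.

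The \emph{main obstacle} is the term $\langle h(x,v_1+z_1(\theta_t\omega)),\Delta v_1\rangle$: the superlinear growth \eqref{eqn:condhnew} forces the higher-integrability quantity $\|u_1\|_{2p-2}^{2p-2}$ into the forcing $\tilde h$, and it is precisely to make $\int\|u_1\|_{2p-2}^{2p-2}$ finite that Lemma~\ref{lem:2p} was proved; without it the Uniform Gronwall step would not close. A secondary, purely bookkeeping difficulty is tracking the $2r$ backward shift so that every $L^2$-norm entering the final bound is evaluated beyond the $L^2$-absorption time $t_\cD(\omega)$ and therefore stays controlled by the tempered absorbing radius; this is exactly what the hypothesis $t\geq t_\cD(\omega)+2r$ is designed to guarantee.
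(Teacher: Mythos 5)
Your proposal follows essentially the same route as the paper: testing \eqref{eqn:RDS1} with $-\Delta v_1$, reducing the right-hand side via \eqref{eqn:condhnew} and \eqref{eqn:condf} to a forcing controlled by $\|u_1\|_{2p-2}^{2p-2}$ and $\|u_2\|_2^2$, closing with the Uniform Gronwall Lemma using \eqref{eqn:uninabla} and Lemma~\ref{lem:2p}, and then performing the pullback substitution $\omega\mapsto\theta_{-t}\omega$ together with the absorption property and temperedness to assemble $\rho_1(\omega)$. The only cosmetic deviation is that you drop the dissipative term entirely ($g\equiv 0$) where the paper keeps $g=-dc$ via Poincar\'e, which changes nothing; your identification of Lemma~\ref{lem:2p} as the essential ingredient and of the $2r$ backward shift as the reason for the hypothesis $t\geq t_\cD(\omega)+2r$ matches the paper's argument.
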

\begin{proof}
Remember that $v_1$ satisfies the equation \eqref{eqn:RDS1} 
and thus 
\begin{align*}
&\frac{1}{2}\frac{\txtd}{\txtd t}\|\nabla v_1\|_{2}^2=\left\langle \frac{\txtd }{\txtd t}v_1,-\Delta v_1\right\rangle\\
&=\langle d\Delta v_1-h(x,v_1+z_1(\theta_t\omega))-f(x,v_1+z_1(\theta_t\omega),v_2+z_2(\theta_t\omega)),-\Delta v_1\rangle \\
&=-d \|\Delta v_1\|_{2}^2+\langle h(x,v_1+z_1(\theta_t\omega)),\Delta v_1\rangle\\
&\qquad +\langle f(x,v_1+z_1(\theta_t\omega),v_2+z_2(\theta_t\omega)),\Delta v_1\rangle\\
&\leq  -d\|\Delta v_1\|_2^2+\int_D \delta_8(1+|u_1|^{p-1})|\Delta v_1|~\txtd x\\
&\qquad +\int_D \delta_4(1+|u_1|^{p_1}+|u_2|)|\Delta v_1|~\txtd x \\
&\leq -d\|\Delta v_1\|_2^2+C\int_D (2+|u_1|^{p-1}+|u_1|^{p_1}+|u_2|)|\Delta v_1|~\txtd x \\
&\leq -\frac{d}{2}\|\Delta v_1\|_2^2+C\int_D (1+|u_1|^{p-1}+|u_1|^{p_1}+|u_2|)^2~\txtd x \\
&\leq  -\frac{d}{2}\|\Delta v_1\|_2^2+C\int_D (1+|u_1|^{2p-2}+|u_2|^2)~\txtd x \\
&= -\frac{d}{2}\|\Delta v_1\|_2^2+C_1+ C\|u_1\|_{2p-2}^{2p-2}+ C \| u_2\|_2^2\\
&\leq  -\frac{dc}{2}\|\nabla v_1\|_2^2+C_1+ C\|u_1\|_{2p-2}^{2p-2}+C \| u_2\|_2^2. 
\end{align*}
 We want to apply the uniform Gronwall Lemma now. Therefore, note
\begin{align*}\label{eqn:gr}
\frac{\txtd}{\txtd t}\underbrace{\|\nabla v_1(t,\omega,v_1^0(\omega))\|_{2}^2}_{:=y(t)}
\leq&\underbrace{-dc}_{:=g(t)}\|\nabla v_1(t,\omega,v_1^0(\omega))\|_2^2\\
& +\underbrace{C_1+ C\|u_1(t,\omega,u_1^0(\omega))\|_{2p-2}^{2p-2}+C \| u_2(t,\omega,u_2^0(\omega))\|_2^2}_{:=h(t)}.
\end{align*}
We calculate
\begin{equation}
\int_{t}^{t+r}g(s)~\txtd s\leq 0
\end{equation}
and 
\begin{align*}
&\int_{t}^{t+r}\| \nabla v_1(s,\omega,v_1^0(\omega))\|_2^2~\txtd s\\
&\leq Cr+C_1\int_t^{t+r}\left(\|z_2(\theta_s\omega)\|_{2}^2+\|z_1(\theta_s\omega)\|_{p}^p\right)\txtd s\\
&\qquad +C_2\left(\|v_1(t,\omega,v_1^0(\omega))\|^2_2+\|v_2(t,\omega,v_2^0(\omega))\|_2^2\right.
\end{align*}
where we have applied Lemma \ref{lem:aux1}. 
By Lemma \ref{lem:2p} for $t\geq r$
\begin{align*}
&\int_t^{t+r}\|u_1(s,\omega,u_1^0(\omega))\|^{2p-2}_{2p-2}~\txtd s
\\
&\leq C_6r+\int_{t-r}^{t+r}C_2\|z_1(\theta_s\omega)\|_{p^2-p}^{p^2-p}+C_3\|z_2(\theta_s\omega)\|_2^2+C_4\|u_2(s,\omega,v_2^0(\omega))\|_2^2~\txtd s\\
&\qquad +C_5\|v_1(t-r,\omega,v_1^0(\omega))\|^2_2+C_5\|v_2(t-r,\omega,v_2^0(\omega))\|_2^2.
\end{align*}
Now, the uniform Gronwall Lemma yields for $t\geq r$
\begin{align*}
&\|\nabla v_1(t+r,\omega,v_1^0(\omega))\|_2^2\\&\leq C+C_1\int_t^{t+r}\left(\|z_2(\theta_s\omega)\|_{2}^2+\|z_1(\theta_s\omega)\|_{p}^p\right)\txtd s\\
&\qquad +C_2\left(\|v_1(t,\omega,v_1^0(\omega))\|^2_2+\|v_2(t,\omega,v_2^0(\omega))\|_2^2\right)\\
&\qquad{}+C_3\int_{t-r}^{t+r}\|z_1(\theta_s\omega)\|_{p^2-p}^{p^2-p}+\|z_2(\theta_s\omega)\|_2^2+\|u_2(s,\omega,v_2^0(\omega))\|_2^2~\txtd s\\
&\qquad{}+C_4\left(\|v_1(t-r,\omega,v_1^0(\omega))\|^2_2+\|v_2(t-r,\omega,v_2^0(\omega))\|_2^2\right)\\
&\qquad +C_5\int_t^{t+r}\|u_2(s,\omega,u_2^0(\omega))\|_2^2~\txtd s\\
&\leq C+C_1\int_{t-r}^{t+r}\|u_2(s,\omega,u_2^0(\omega))\|_2^2~\txtd s\\
&\qquad +C_2\int_{t-r}^{t+r}\|z_1(\theta_s\omega)\|_{p^2-p}^{p^2-p}+\|z_2(\theta_s\omega)\|_2^2~\txtd s \\
&\qquad +C_3\left(\|v_1(t,\omega,v_1^0(\omega))\|^2_2 
+\|v_2(t,\omega,v_2^0(\omega))\|_2^2
...\right.\\
&\qquad \qquad  \left.+\|v_1(t-r,\omega,v_1^0(\omega))\|^2_2+\|v_2(t-r,\omega,v_2^0(\omega))\|_2^2\right).
\end{align*}
That is, for $t\geq 0$ we have
\begin{align*}
&\|\nabla v_1(t+2r,\omega,v_1^0(\omega))\|_2^2\\
&\leq C+C_1\int_{t}^{t+2r}\|v_2(s,\omega,u_2^0(\omega))\|_2^2~\txtd s\\
&\qquad +C_2\int_{t}^{t+2r}\|z_1(\theta_s\omega)\|_{p^2-p}^{p^2-p}+\|z_2(\theta_s\omega)\|_2^2~\txtd s\\&\qquad{}+C_3\left(\|v_1(t+r,\omega,v_1^0(\omega))\|^2_2+\|v_2(t+r,\omega,v_2^0(\omega))\|_2^2...\right.\\
&\qquad \qquad \left.+\|v_1(t,\omega,v_1^0(\omega))\|^2_2+\|v_2(t,\omega,v_2^0(\omega))\|_2^2\right).
\end{align*} 
Let us recall that our goal is to find a $t^*\geq t_\cD(\omega)$ such that \eqref{eqn:1} holds. 
Now assume that $t\geq t_\cD(\omega)$. We replace $\omega$ by $\theta_{-t-2r}\omega$ (again note the $\mathbb P$-preserving property of the MDS), then 
\begin{align*}
&\|\nabla v_1(t+2r,\theta_{-t-2r}\omega,v_1^0(\theta_{-t-2r}\omega))\|_2^2\\
&\leq C+C_1\int_{t}^{t+2r}\|v_2(s,\theta_{-t-2r}\omega,u_2^0(\theta_{-t-2r}\omega))\|_2^2~\txtd s\\&\qquad{}+C_2\int_{t}^{t+2r}\|z_1(\theta_{s-t-2r}\omega)\|_{p^2-p}^{p^2-p}+\|z_2(\theta_{s-t-2r}\omega)\|_2^2~\txtd s\\&\qquad{}+C_3\left(\|v_1(t+r,\theta_{-t-2r}\omega,v_1^0(\theta_{-t-2r}\omega))\|^2_2\right. ...\\
&\qquad  \qquad +\|v_2(t+r,\theta_{-t-2r}\omega,v_2^0(\theta_{-t-2r}\omega))\|_2^2...\\
&\qquad{}\qquad{}+ \|v_1(t,\theta_{-t-2r}\omega,v_1^0(\theta_{-t-2r}\omega))\|^2_2...\\
&\qquad \qquad \left.+\|v_2(t,\theta_{-t-2r}\omega,v_2^0(\theta_{-t-2r}\omega))\|_2^2\right).
\end{align*} 
As $t\geq t_\cD(\omega)$ we know by the absorption property  that there exists a $\tilde \rho(\omega)$ such that 
$$\|v_1(t,\theta_{-t}\omega,v_1^0(\theta_{-t}\omega))\|_2^2\leq \tilde \rho(\omega),$$
and thus replacing $\omega$ by $\theta_{-2r}\omega$
$$\|v_1(t,\theta_{-t-2r}\omega,v_1^0(\theta_{-t-2r}\omega))\|_2^2\leq \tilde \rho(\theta_{-2r}\omega).$$
Similarly, we know that 
$$\|v_1(t+r,\theta_{-t-r}\omega,v_1^0(\theta_{-t-r}\omega))\|_2^2\leq \tilde \rho(\theta_{-r}\omega),$$
and thus by replacing  $\omega$ by $\theta_{-r}\omega$
$$\|v_1(t+r,\theta_{-t-2r}\omega,v_1^0(\theta_{-t-2r}\omega))\|_2^2\leq \tilde \rho(\theta_{-2r}\omega).$$
The same arguments hold for $v_2$.  Furthermore, as $t\geq t_\cD(\omega)$ and we know from Lemma \ref{lem:abs} that there exists a tempered random variable $\hat\rho(\omega)$ such that for $s\in (t,t+2r)$ 
\begin{align*}
\|v_{2}(s,\theta_{-s}\omega,u_2^0(\theta_{-s}\omega))\|^{2}_{2} \leq \hat{\rho}(\omega)
\end{align*}
and thus 
\begin{align*}
&\int\limits_{t}^{t+2r}\|v_{2}(s,\theta_{-t-2r}\omega,u_2^0(\theta_{-t-2r}\omega))\|^{2}_{2} \txtd s \\
&\leq \int\limits_{t}^{t+2r} \hat{\rho} (\theta_{s-t-2r}\omega)~\txtd s=\int\limits_{0}^{2r}\hat{\rho} (\theta_{\tau-2r}\omega)~\txtd \tau =\int\limits_{-2r}^{0}\hat{\rho} (\theta_{y}\omega)\txtd y.
\end{align*}
With similar substitutions in the integral over $\|z_1(\theta_{s-t-2r}\omega)\|_{p^2-p}^{p^2-p}$ and\newline $\|z_2(\theta_{s-t-2r}\omega)\|_2^2$ we arrive at 
\begin{align*}
&\|\nabla v_1(t+2r,\theta_{-t-2r}\omega,v_1^0(\theta_{-t-2r}\omega))\|_2^2\\
&\leq C+C_1\int\limits_{-2r}^{0}\hat{\rho} (\theta_{y}\omega)\txtd y+C_2\int_{-2r}^{0}\|z_1(\theta_{y}\omega)\|_{p^2-p}^{p^2-p}+\|z_2(\theta_{y}\omega)\|_2^2~\txtd y\\
&\qquad +C_3 \tilde \rho(\theta_{-2r}\omega),
\end{align*} 
where the right hand side is independent of $t$. Due to the temperedness of all terms involved, they can be combined into one tempered random variable $\rho_1(\omega)$ such that for $t\geq t_\cD(\omega)+2r=:t^*$ we have 
\begin{equation*}
\|\nabla v_1(t,\theta_{-t}\omega,v_1^0(\theta_{-t}\omega))\|_2^2\leq \rho_1(\omega),
\end{equation*}
this concludes the proof. 
\end{proof}

We are now able to prove the boundedness of the first term of $v_2$ in $H^1(D)$. 

\begin{lem}\label{lem:firstcomp}
Let Assumptions  \ref{ass:1} and \ref{ass:2}  hold. Let $\cD=(\cD_1,\cD_2)\in \cT$ and $u^0\in \cD$. Assume $t\geq t_\cD(\omega)+2r$ for some  $r>0$. Then we have 
\begin{equation}
\|\nabla v_2^1(t,\theta_{-t}\omega,0)\|_2^2\leq \rho_2(\omega),
\end{equation}
where $\rho_2(\omega)$ is a tempered random variable. 
\end{lem}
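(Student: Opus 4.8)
The plan is to obtain the $H^1$-bound on $v_2^1$ by a direct energy estimate on its spatial gradient, using the uniform dissipativity $\sigma(x)\geq\delta$ from \eqref{eqn:condsi} as a substitute for the Laplacian that is absent in the second component. Since \eqref{eqn:split1} contains no spatial derivative of $v_2^1$ itself, I would first differentiate it with respect to $x_i$ (formally; this is justified by a standard difference-quotient or Galerkin approximation), which gives
\[
\frac{\txtd}{\txtd t}\partial_{x_i}v_2^1 = -\sigma(x)\,\partial_{x_i}v_2^1 - (\partial_{x_i}\sigma)\,v_2^1 - g_{x_i}(x,u_1) - g_u(x,u_1)\,\partial_{x_i}\!\big(v_1+z_1(\theta_t\omega)\big),
\]
where $u_1=v_1+z_1(\theta_t\omega)$. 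Multiplying by $\partial_{x_i}v_2^1$, integrating over $D$ and summing over $i$ produces
\[
\tfrac12\tfrac{\txtd}{\txtd t}\|\nabla v_2^1\|_2^2 = -\int_D\sigma|\nabla v_2^1|^2 - \int_D(\nabla\sigma\cdot\nabla v_2^1)v_2^1 - \sum_i\int_D\big(g_{x_i}+g_u\,\partial_{x_i}(v_1+z_1)\big)\partial_{x_i}v_2^1 .
\]

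For the first term I use $\sigma\geq\delta$ to extract the dissipation $-\delta\|\nabla v_2^1\|_2^2$. Since $\sigma\in C^2$ and $D$ is bounded, $\nabla\sigma$ is bounded, so Young's inequality bounds the second term by $\tfrac{\delta}{4}\|\nabla v_2^1\|_2^2+C\|v_2^1\|_2^2$. For the last term I invoke Assumption \ref{ass:1}\ref{ass:g}, i.e. $|g_u|\leq\delta_5$ and $|g_{x_i}|\leq\delta_5(1+|u_1|)$, and Young's inequality again to absorb a further $\tfrac{\delta}{4}\|\nabla v_2^1\|_2^2$, arriving at
\[
\tfrac{\txtd}{\txtd t}\|\nabla v_2^1\|_2^2 \leq -\delta\|\nabla v_2^1\|_2^2 + C\big(1+\|v_2^1\|_2^2+\|u_1\|_2^2+\|\nabla v_1\|_2^2+\|\nabla z_1(\theta_t\omega)\|_2^2\big).
\]
I would then apply the uniform Gronwall Lemma exactly as in Lemma \ref{lem:HV}, with $g\equiv-\delta$ (so $\int_t^{t+r}g\leq 0$). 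The integral bounds on the forcing that the lemma requires are already available: $\int_t^{t+r}\|\nabla v_1\|_2^2$ from \eqref{eqn:uninabla}, $\int_t^{t+r}\|u_1\|_2^2\leq C+\int_t^{t+r}\|u_1\|_p^p$ from \eqref{eqn:pnorm} (using the Lebesgue embedding on the bounded domain), while $\|v_2^1\|_2^2\leq 2\|v_2\|_2^2+2\|v_2^2\|_2^2$ is controlled through Lemma \ref{lem:abs}, and $\int_t^{t+r}\|\nabla z_1(\theta_s\omega)\|_2^2$ is finite and tempered by Remark \ref{rem:nabla}; the integral bound on $\|\nabla v_2^1\|_2^2$ itself follows by integrating the differential inequality and using $v_2^1(0)=0$ together with the dissipation. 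Finally, replacing $\omega$ by $\theta_{-t-2r}\omega$ and carrying out the same change-of-variables and shift bookkeeping as in Lemma \ref{lem:HV}, now feeding in the pointwise bound $\|\nabla v_1(t,\theta_{-t}\omega,\cdot)\|_2^2\leq\rho_1(\omega)$ from that lemma, collapses the right-hand side into a single tempered random variable $\rho_2(\omega)$ valid for $t\geq t_\cD(\omega)+2r$.

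The main obstacle is that the $\nabla v_2^1$ equation possesses no smoothing of its own, so the forcing in the differential inequality must be supplied entirely by the previously established regularity of the first component. Concretely, the terms $\|\nabla v_1\|_2^2$ and $\|u_1\|_2^2$ cannot be controlled from \eqref{eqn:split1} in isolation; they are precisely the quantities estimated in Lemmas \ref{lem:HV}, \ref{lem:abs} and \ref{lem:aux1}, which is exactly why those results had to be proved beforehand, and the growth $|g_{x_i}|\leq\delta_5(1+|u_1|)$ from \eqref{eqn:condgnew}/Assumption \ref{ass:1}\ref{ass:g} is what forces the appearance of $\|u_1\|_2^2$. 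A secondary technical point is that, for a fixed $\omega$, only integral-in-time bounds on the forcing are directly at hand, so the uniform Gronwall Lemma rather than ordinary Gronwall is the natural tool, and the pullback must be organized through the shift arguments of Lemma \ref{lem:HV} so that the history-dependent constants genuinely reduce to tempered random variables.
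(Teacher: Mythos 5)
Your energy estimate is essentially identical to the paper's: differentiating \eqref{eqn:split1} in $x_i$ and testing with $\partial_{x_i}v_2^1$ is, after integration by parts, the same computation as the paper's pairing with $-\Delta v_2^1$, and you arrive at the same differential inequality
$\frac{\txtd}{\txtd t}\|\nabla v_2^1\|_2^2+\delta\|\nabla v_2^1\|_2^2\leq C\bigl(1+\|v_2^1\|_2^2+\|v_1\|_2^2+\|z_1\|_2^2+\|\nabla v_1\|_2^2+\|\nabla z_1\|_2^2\bigr)$,
with the same use of \eqref{eqn:condsi}, \eqref{eqn:condg} and Remark~\ref{rem:nabla}. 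The divergence, and the one genuine gap, is in how you close this inequality. You propose the uniform Gronwall Lemma, which requires a windowed bound $\int_t^{t+r}\|\nabla v_2^1(s,\omega,0)\|_2^2\,\txtd s\leq a_3$ with $a_3$ reducing to a tempered random variable after the pullback substitution. You claim this ``follows by integrating the differential inequality and using $v_2^1(0)=0$ together with the dissipation,'' but the unweighted integration only gives $\delta\int_t^{t+r}\|\nabla v_2^1\|_2^2\leq\int_0^{t+r}h(s)\,\txtd s$, and after replacing $\omega$ by $\theta_{-t-2r}\omega$ the right-hand side contains terms like $\int_{-t-2r}^{-r}\|\nabla z_1(\theta_y\omega)\|_2^2\,\txtd y$, which is an integral of a stationary process over an interval of length $t$ and diverges (a.s.\ linearly in $t$) as $t\to\infty$. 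Unlike $v_1$, for which Lemma~\ref{lem:aux1} supplies a genuinely windowed integral estimate, $v_2^1$ has no smoothing and its $H^1$ norm at time $t$ carries the entire forcing history from $0$; so there is no shortcut to a $t$-uniform $a_3$.

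The repair is exactly what the paper does: keep the exponential weight, i.e.\ apply ordinary Gronwall from the initial condition $v_2^1(0)=0$ to get the pointwise bound $\|\nabla v_2^1(t)\|_2^2\leq\int_0^t e^{\delta(s-t)}h(s)\,\txtd s$ directly (at which point uniform Gronwall is superfluous), and then control the weighted history integral after pullback by splitting it at $t_\cD(\omega)$: on $[t_\cD(\omega),t]$ the absorption property bounds $\|v_1\|_2^2,\|v_2\|_2^2$ (hence $\|v_2^1\|_2^2$) by $\rho(\omega)$, while on $[0,t_\cD(\omega)]$ one needs the explicit exponentially-weighted estimates \eqref{eqn:zuzu} and \eqref{eqn:wowo}, derived from \eqref{eqn:koko}, rather than the pointwise conclusion of Lemma~\ref{lem:HV} (which is stated only for the shift $\theta_{-s}$ matched to the evaluation time $s$ and for $s\geq t_\cD(\omega)+2r$, so it does not directly cover the fixed-shift integrand $\|\nabla v_1(s,\theta_{-t-2r}\omega,\cdot)\|_2^2$ over the whole range of $s$). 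With that replacement your argument coincides with the paper's proof.
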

\begin{proof}
Remember that $v_2^1$ satisfies the equation \eqref{eqn:split1}
and thus 
\begin{align*}
\frac{1}{2}\frac{\txtd}{\txtd t}\|\nabla v_2^1\|_{2}^2&=\langle \frac{\txtd }{\txtd t}v_2^1,-\Delta v_2^1\rangle\\
&=\langle -\sigma(x)v_2^1-g(x,v_1+z_1),-\Delta v_2^1\rangle \\
&=\underbrace{\langle \sigma(x)v_2^1, \Delta v_2^1\rangle}_{=:L_1}+\underbrace{\langle g(x,v_1+z_1),\Delta v_2^1\rangle}_{=:L_2}.
\end{align*}
We estimate $L_1$ and $L_2$ separately 
\begin{align*}
L_1&=\int_D\sigma(x)v_2^1\Delta v_2^1 \txtd x\\
&=-\int_D\nabla(\sigma(x)v_2^1)\cdot \nabla v_2^1\txtd x\\
&\leq -\delta \|\nabla v_2^1\|_{2}^2-\int_D \nabla \sigma(x)v_2^1\cdot \nabla v_2^1\txtd x,
\end{align*}
and
\begin{align*}
L_2&=\int_Dg(x,v_1+z_1)\Delta v_2^1~\txtd x=-\int_D\nabla g(x,v_1+z_1)\cdot \nabla v_2^1~\txtd x\\
&=-\int_D \left(\nabla g(x,v_1+z_1)+\partial_\xi g(x,v_1+z_1)\nabla(v_1+z_1)\right)\cdot \nabla v_2^1~\txtd x,
\end{align*}
where in the last equation the gradient is to be understood as 
$$\nabla g(x,v_1+z_1)=(\partial_{x_1}g(x,v_1+z_1),...,\partial_{x_n}g(x,v_1+z_1))^\top.$$
Hence, 
\begin{align*}
&\frac{\txtd}{\txtd t}\|\nabla v_2^1\|_{2}^2+2\delta \|\nabla v_2^1\|_{2}^2\\&\leq 2\int_D\left|\nabla \sigma(x)v_2^1+\nabla g(x,v_1+z_1)+\partial_\xi g(x,v_1+z_1)\nabla(v_1+z_1)\right| |\nabla v_2^1|~\txtd x\\
&\leq \frac{1}{\delta} \int_D\left|\nabla \sigma(x)v_2^1+\nabla g(x,v_1+z_1)+\partial_\xi g(x,v_1+z_1)\nabla (v_1+z_1)\right|^2~\txtd x\\
&\qquad +\delta\|\nabla v_2^1\|_{2}^2
\end{align*}
and further with \eqref{eqn:condg}
\begin{align*}
&\frac{\txtd}{\txtd t}\|\nabla v_2^1\|_{2}^2+\delta \|\nabla v_2^1\|_{2}^2\\
&\leq \frac{1}{\delta} \int_D\sum_{i=1}^n\left(|\partial_{x_i} \sigma(x)v_2^1|+|\partial_{x_i}g(x,v_1+z_1)|\right....\\
&\qquad \left.+|\partial_\xi g(x,v_1+z_1)\partial_{x_i}(v_1+z_1)|\right)^2~\txtd x\\
&\leq \frac{1}{\delta} \int_D\sum_{i=1}^n\left(C|v_2^1|+\delta_5(1+|v_1+z_1|)+\delta_5|\partial_{x_i}(v_1+z_1)|\right)^2~\txtd x\\
&\leq \frac{2}{\delta} (C+\delta_5)^2 n \int_D\left(|v_2^1|+1+|v_1+z_1|\right)^2~\txtd x+\frac{2\delta_5^2}{\delta} \int_D\sum_{i=1}^n |\partial_{x_i}(v_1+z_1)|^2~\txtd x\\
&=\frac{2}{\delta} (C+\delta_5)^2 n \int_D\left(|v_2^1|+1+|v_1+z_1|\right)^2~\txtd x+\frac{2\delta_5^2}{\delta} \|\nabla(v_1+z_1)\|_{2}^2\\
&\leq C_1+C_2(\|v_2^1\|_2^2+\|v_1\|_2^2+\|z_1\|_2^2)+C_3(\|\nabla v_1\|_2^2+\|\nabla z_1\|_2^2).
\end{align*}
where $C:=\max_{1\leq i\leq n}\max_{x\in \overline D}| \partial_{x_i}\sigma(x)|$. Next, we apply Gronwall's inequality while taking the initial condition into account and we obtain for $t\geq 0$
\begin{align}\label{eqn:zuz}
\|\nabla v_2^1\|_2^2&\leq \int_0^t \left[C_1+C_2(\|v_2^1\|_2^2+\|v_1\|_2^2+\|z_1\|_2^2)+C_3(\|\nabla v_1\|_2^2+\|\nabla z_1\|_2^2)\right] ....\nonumber \\
&\qquad \times \e{(s-t)\delta}~\txtd s.
\end{align}
We have from \eqref{eqn:pandv} the following equation 
\begin{equation}\label{eqn:koko}\frac{\txtd }{\txtd t}(\|v_1\|_2^2+\|v_2\|_2^2)+M(\|v_1\|_2^2+\|v_2\|_2^2)+d\|\nabla v_1\|_2^2\leq \hat C+\tilde C(\|z_2(\theta_t\omega)\|_2^2+\|z_1(\theta_t\omega)\|_p^p),
\end{equation}
where $M=\min\{d/c,\delta\}$ and certain constants $\hat C,\tilde C$.
We multiply \eqref{eqn:koko} by $\exp(Mt)$ and integrate between $0$ and $t$
\begin{align*}
&\int_0^t \exp(Ms)\frac{\txtd }{\txtd s} (\|v_1\|_2^2+\|v_2\|_2^2)\txtd s+M\int_0^t \exp(Ms) (\|v_1\|_2^2+\|v_2\|_2^2) \txtd s\\
&\qquad{} +d \int_0^t \exp(Ms)\|\nabla v_1\|_2^2\txtd s\\
&\leq \int_0^t \hat C\exp(Ms)\txtd s+\tilde C\int_0^t \exp(Ms) (\|z_2(\theta_s\omega)\|_2^2+\|z_1(\theta_s\omega)\|_p^p)\txtd s.
\end{align*}
This yields 
\begin{align}\label{eqn:zuzu}
&\int_0^t\exp(M(s-t))\|\nabla v_1(s,\omega,v_1^0(\omega))\|_2^2\txtd s\nonumber\\
&\leq \frac{1}{d}\exp(-Mt)(\|v_1^0(\omega)\|_2^2+\|v_2^0(\omega)\|_2^2) +\hat C\nonumber\\
&\qquad{}+\tilde C\int_0^t \exp(M(s-t)) (\|z_2(\theta_s\omega)\|_2^2+\|z_1(\theta_s\omega)\|_p^p)\txtd s,
\end{align}
as well as
\begin{align*}
&\|v_1(t,\omega,v_1^0(\omega))\|^2_{2}+\|v_2(t,\omega,v_2^0(\omega))\|^2_{2}\nonumber\\
&\leq \left(\|v_1^0(\omega)\|^2_{2}+\|v_2^0(\omega)\|^2_{2}\right)\e{-Mt}+\hat C\nonumber\\&\qquad{}+ \tilde C\int_0^t\e{M(s-t)}\left(\|z_2(\theta_s\omega)\|_{2}^2+\|z_1(\theta_s\omega)\|_{p}^p\right)~\txtd s.
\end{align*}
In particular, from the last estimate we obtain 
\begin{align}\label{eqn:wowo}
&\int_0^{t_\cD(\omega)}(\|v_1(s,\theta_{-t}\omega,v_1^0(\theta_{-t}\omega)\|_2^2+\|v_2(s,\theta_{-t}\omega,v_2^0(\theta_{-t}\omega))\|_2^2)\exp(M(s-t))\txtd s\nonumber \\
&\leq \int_0^{t_\cD(\omega)} \left(\|v_1^0(\theta_{-t}\omega)\|^2_{2}+\|v_2^0(\theta_{-t}\omega)\|^2_{2}\right)\e{-Mt} \txtd s+\hat C\int_0^{t_\cD(\omega)}\exp(M(s-t))\txtd s\nonumber\\&\qquad{}+ \tilde C\int_0^{t_\cD(\omega)}\int_0^s\e{M(\tau-t)}\left(\|z_2(\theta_{\tau-t}\omega)\|_{2}^2+\|z_1(\theta_{\tau-t}\omega)\|_{p}^p\right)~\txtd \tau \txtd s\nonumber\\
&\leq \left(\|v_1^0(\theta_{-t}\omega)\|^2_{2}+\|v_2^0(\theta_{-t}\omega)\|^2_{2}\right)\e{-Mt} t_\cD(\omega)+\hat C\nonumber\\&\qquad{}+ \tilde C t_\cD(\omega) \int_0^{t_\cD(\omega)}\e{M(\tau-t)}\left(\|z_2(\theta_{\tau-t}\omega)\|_{2}^2+\|z_1(\theta_{\tau-t}\omega)\|_{p}^p\right)~\txtd \tau .
\end{align}
where we have replaced $\omega$ by $\theta_{-t}\omega$ after integrating and used that $t\geq t_\cD(\omega)$.

Now, replacing $\omega$ by $\theta_{-t}\omega$ in \eqref{eqn:zuz}, noting that $\delta \geq M$  and assuming that $t\geq t_\cD(\omega)$, we compute
\begin{align*}
&\|\nabla v_2^1(t,\theta_{-t}\omega,0)\|_2^2\nonumber\\
&\leq \frac{C_1}{\delta}+C_2\int_0^t \left[\|v_2^1(s,\theta_{-t}\omega, 0)\|_2^2+\|v_1(s,\theta_{-t}\omega,v_1^0(\theta_{-t}\omega))\|_2^2+\|z_1(\theta_{s-t}\omega)\|_2^2\right.\nonumber
\\&\qquad{}\left.+\|\nabla v_1(s,\theta_{-t}\omega,v_1^0(\theta_{-t}\omega))\|_2^2+\|\nabla z_1(\theta_{s-t}\omega)\|_2^2\right] \e{(s-t)M}~\txtd s\nonumber\\
&\leq C_1+C_2\int_0^{t_\cD(\omega)} \left[\|v_2^1(s,\theta_{-t}\omega, 0)\|_2^2+\|v_1(s,\theta_{-t}\omega,v_1^0(\theta_{-t}\omega))\|_2^2\right]\e{(s-t)M}\txtd s\\
&\qquad{}+C_2 \int_{t_\cD(\omega)}^t \left[\|v_2^1(s,\theta_{-t}\omega, 0)\|_2^2+\|v_1(s,\theta_{-t}\omega,v_1^0(\theta_{-t}\omega))\|_2^2\right]\e{(s-t)M}\txtd s\\
&\qquad{} + C_3 \exp(-Mt)(\|v_1^0(\theta_{-t}\omega)\|_2^2+\|v_2^0(\theta_{-t}\omega)\|_2^2)+C_4\int_0^t \exp(M(s-t))\\
&\qquad{} \times (\|z_2(\theta_{s-t}\omega)\|_2^2+\|z_1(\theta_{s-t}\omega)\|_p^p+\|z_1(\theta_{s-t}\omega)\|_2^2+\|\nabla z_1(\theta_{s-t}\omega)\|_2^2)\txtd s\\
&\leq C_1+C_2\left(\|v_1^0(\theta_{-t}\omega)\|^2_{2}+\|v_2^0(\theta_{-t}\omega)\|^2_{2}\right)\e{-Mt} t_\cD(\omega)\\&\qquad{}+ C_5 t_\cD(\omega) \int_0^{t_\cD(\omega)}\e{M(\tau-t)}\left(\|z_2(\theta_{\tau-t}\omega)\|_{2}^2+\|z_1(\theta_{\tau-t}\omega)\|_{p}^p\right)~\txtd \tau\\
&\qquad{} +C_2 \int_{t_\cD(\omega)}^t \rho(\omega)\e{(s-t)M}\txtd s\\
&\qquad{} + C_3 \exp(-Mt)(\|v_1^0(\theta_{-t}\omega)\|_2^2+\|v_2^0(\theta_{-t}\omega)\|_2^2)\nonumber\\
&\qquad{}+C_4\int_{-\infty}^0 \exp(Ms) (\|z_2(\theta_{s}\omega)\|_2^2+\|z_1(\theta_{s}\omega)\|_p^p+\|z_1(\theta_{s}\omega)\|_2^2+\|\nabla z_1(\theta_{s}\omega)\|_2^2)\txtd s\\
&\leq C_1+C_2(t_\cD(\omega))\left(\|v_1^0(\theta_{-t}\omega)\|^2_{2}+\|v_2^0(\theta_{-t}\omega)\|^2_{2}\right)\e{-Mt}+C_3 \rho(\omega)\\
&\qquad +C_4(t_\cD(\omega))\int_{-\infty}^0 \exp(Ms)\\
&\qquad \times (\|z_2(\theta_{s}\omega)\|_2^2+\|z_1(\theta_{s}\omega)\|_p^p+\|z_1(\theta_{s}\omega)\|_2^2+\|\nabla z_1(\theta_{s}\omega)\|_2^2)\txtd s
\end{align*}
where we have used \eqref{eqn:zuzu} in the second inequality and \eqref{eqn:wowo}  in the third inequality. Furthermore, we made use of the absorption property in the third inequality. Finally, since $\|z_2(\theta_{s}\omega)\|_2^2,\|z_1(\theta_{s}\omega)\|_p^p$, $\|z_1(\theta_{s}\omega)\|_2^2,\|\nabla z_1(\theta_{s}\omega)\|_2^2$ (see Lemma \ref{lem:temp} and Remark \ref{rem:nabla}) and $\|v_1^0(\theta_{-t}\omega)\|^2_{2},\|v_2^0(\theta_{-t}\omega)\|^2_{2}$ (by assumption) are tempered random variables, we can combine the right hand side into one tempered random variable $\rho_2(\omega)$ and this concludes the proof.
\end{proof}

\begin{thm} Let Assumptions \ref{ass:1} and \ref{ass:2}  hold. The random dynamical system defined in \eqref{eqn:ourRDS} has a unique $\cT$-random attractor $\cA$.  
\end{thm}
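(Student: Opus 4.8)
The plan is to verify the two hypotheses of Theorem~\ref{thm:atthm}, namely that $\varphi$ is a continuous RDS and that it admits a compact, tempered, $\cT$-absorbing set. Continuity of the cocycle and the cocycle identity were already recorded in Section~\ref{subsec:rds}, so $\varphi$ is a continuous RDS, and Lemma~\ref{lem:abs} supplies a bounded, tempered, $\cT$-absorbing set $\cB=\{\cB(\omega)\}_{\omega\in\Omega}$. It therefore only remains to upgrade this bounded absorbing set to a compact one, which is the sole point where the partly dissipative structure is felt and is handled by the splitting of Section~\ref{subsec:compact}.

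First I would fix an arbitrary $\cD=(\cD_1,\cD_2)\in\cT$ and use the decomposition $\varphi=\varphi_1+\varphi_2$ of Section~\ref{subsec:compact}, with $\varphi_1=(v_1+z_1,\,v_2^1+z_2)$ and $\varphi_2=(0,v_2^2)$. The three estimates collected in \eqref{eqn:1}, \eqref{eqn:2} and \eqref{eqn:3} then do all the work: Lemma~\ref{lem:HV} together with Remark~\ref{rem:nabla}(2) yields \eqref{eqn:1}; Lemma~\ref{lem:firstcomp} bounds $\|\nabla v_2^1\|_2$ by the tempered $\rho_2(\omega)$, which provides the compactness encoded in \eqref{eqn:2} (here $z_2(\omega)$ is a single fixed element of $L^2(D)$ and hence does not affect relative compactness); and the decay estimate for $v_2^2$ yields \eqref{eqn:3}. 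By \eqref{eqn:1}--\eqref{eqn:2} and the compactness of the embedding $H^1(D)\hookrightarrow L^2(D)$ (Rellich--Kondrachov), the regular part $\varphi_1(t,\theta_{-t}\omega,\cD(\theta_{-t}\omega))$ lies, for $t\geq t_\cD(\omega)+2r$, in a bounded set of $H^1(D)\times H^1(D)$ (up to the fixed translate $z_2(\omega)$) and is therefore precompact in $\mathbb{H}$, while \eqref{eqn:3} shows that $\varphi_2$ is a remainder vanishing in $\mathbb{H}$.

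Next I would invoke \cite[Theorem~2.1]{Temam}: a family that is the sum of a precompact set and a norm-vanishing remainder is asymptotically compact, so the pullback images of the absorbing set $\cB$ accumulate on the relatively compact set $\cB^{*}(\omega)$ of \eqref{eqn:cas}. Its temperedness is inherited from that of $\rho_1,\rho_2$ together with $\|z_1(\omega)\|_2$ and $\|z_2(\omega)\|_2$ (Lemma~\ref{lem:temp}), whence $\cB^{*}\in\cT$; and since $\cB$ is $\cT$-absorbing, the cocycle identity $\varphi(t+t^{*},\theta_{-t-t^{*}}\omega,\cdot)=\varphi(t^{*},\theta_{-t^{*}}\omega,\cdot)\circ\varphi(t,\theta_{-t-t^{*}}\omega,\cdot)$ propagates this absorption to $\cB^{*}$. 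With both hypotheses of Theorem~\ref{thm:atthm} in place, it delivers a unique $\cT$-random attractor, explicitly $\cA(\omega)=\overline{\bigcup_{\cD\in\cT}\Omega_\cD(\omega)}$; by the conjugacy invoked in Section~\ref{subsec:rds} this is simultaneously an attractor for the original system \eqref{eqn:PDE}--\eqref{eqn:boundcond}.

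The crux is the compactness step, i.e.\ recovering asymptotic compactness in $\mathbb{H}$ even though the second component carries no Laplacian. The genuine difficulty is that $v_2^2$ merely \emph{decays} rather than gaining spatial regularity, so compactness cannot be read off from a single Sobolev embedding but must be assembled asymptotically from the uniform $H^1$-bounds on $v_1$ and $v_2^1$ (Lemmas~\ref{lem:HV} and \ref{lem:firstcomp}) and the $L^2$-decay of $v_2^2$; everything downstream of this---the temperedness and the absorbing property of $\cB^{*}$---is routine bookkeeping with the cocycle property.
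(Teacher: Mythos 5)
Your proposal is correct and follows essentially the same route as the paper: the paper's own proof is a two-line appeal to the preceding lemmas and Theorem~\ref{thm:atthm}, and your write-up simply fills in the details exactly as laid out in Section~\ref{subsec:compact} (the splitting $\varphi=\varphi_1+\varphi_2$, the estimates \eqref{eqn:1}--\eqref{eqn:3} via Lemmas~\ref{lem:HV} and~\ref{lem:firstcomp}, compact embedding, and \cite[Theorem~2.1]{Temam}). Your observation that $z_2(\omega)$ need only be a fixed translate in $L^2(D)$, rather than an $H^1$ element, is a small but welcome clarification of \eqref{eqn:2}.
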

\begin{proof}
By the previous lemmas there exist a compact absorbing set given by \eqref{eqn:cas} in $\mathcal T$ for the RDS $\varphi$. Thus Theorem \ref{thm:atthm} guarantees the existence of a unique $\cT$-random attractor. 
\end{proof}

%%%%%%%%%%%%%%%%%%%%%%%%%%%%%%%%%%%%%%%%%%%%%%%%%%%%%%%%%%%%%%%%%%%%%%%%%%%%%%%%%%%%%%%%%%%%%%%%%%%%%%%%%%%%%%%%%%
\section{Applications}
\label{sec:applications}

%%%%%%%%%%%%%%%%%%%%%%%%%%%%%%%%%%%%%%%%%%%%%%%%%%%%%%%%%%%%%%%%%%%%%%%%%%%%%%%%%%%%%%%%%%%%%%%%%%%%%%%%%%%%%%%%%%
\subsection{FitzHugh-Nagumo system}

Let us consider the famous stochastic FitzHugh-Nagumo system, i.e., 
\begin{equation}\label{eqn:gm}
\begin{array}{rcl}
\txtd u_1 &=&  \left(\nu_1\Delta u_1-p(x)u_1-u_1(u_1-1)(u_1-\alpha_1)-u_2\right)\txtd t+~B_1\txtd W_1,\\
\txtd u_2 &=&   \left(\alpha_2u_1-\alpha_3u_2\right)\txtd t+~B_2\txtd W_2,
\end{array}
\end{equation}
with $D=[0,1]$ and $\alpha_j\in\R$ for $j\in\{1,2,3\}$ are fixed parameters. We always 
assume that the noise terms satisfy Assumptions~\ref{ass:2} and $p\in C^{2}$. %In~\cite{BonaMast} 
%the additional assumptions $p\in C^1([0,1])$, $p=\min_{[0,1]}p(x)>0$, 
%and $3p-(\alpha_1^2-\alpha_1+1)\geq 0$ were made. 
Such systems have been considered under various assumptions by numerous authors, for instance see~\cite{BonaMast, Wang1} and the references specified therein. Our general assumptions are satisfied in this example as follows.
Identifying the terms with the terms given in (\ref{eqn:PDE})-(\ref{eqn:ODE}) we have 
\beann
&&h(x,u_1)=p(x)u_1+u_1(u_1-1)(u_1-\alpha_1), \quad f(x,u_1,u_2)=u_2,\\
&&\sigma(x)u_2=\alpha_3 u_2,\quad g(x,u_1)=-\alpha_2u_1.
\eeann
We have $\sigma(x)=\alpha_3$ and $|f(x,u_1,u_2)|=|u_2|$ , i.e., (\ref{eqn:condsi}) and (\ref{eqn:condf}) 
are fulfilled. Furthermore, $|\partial_ug(x,u_1)|=|\alpha_2|$ and $|\partial_{x_i}g(x,u_1)|=0$ for 
$i=1,\ldots,n$, hence (\ref{eqn:condg}) is satisfied. Finally, as a polynomial with odd degree 
and negative coefficient for the highest degree, $h$ fulfils~\eqref{eqn:condh}. %if we take $p\in C^2$. 
Thus the analysis above guarantees the existence of global mild solutions and the existence of a 
random pullback attractor for the stochastic FitzHugh-Nagumo system. 
% even under just the somewhat 
%weaker assumption on $p=p(x)$ of only $p\in C^2$ instead of any sign %conditions compared 
%to~\cite{BonaMast}.

%%%%%%%%%%%%%%%%%%%%%%%%%%%%%%%%%%%%%%%%%%%%%%%%%%%%%%%%%%%%%%%%%%%%%%%%%%%%%%%%%%%%%%%%%%%%%%%%%%%%%%%%%%%%%%%%%%
\subsection{The Driven Cubic-Quintic Allen-Cahn Model}

The cubic-quintic Allen-Cahn (or real Ginzburg-Landau) equation is given by
\be
\label{eq:ACcq}
\partial_t u = \Delta u + p_1u+u^3-u^5,\qquad u=u(x,t),
\ee
where $(x,t)\in D\times [0,T)$, $p_1\in\R$, is a fixed parameter and we will take $D$ 
as a bounded open domain with regular boundary.
The cubic-quintic polynomial non-linearity frequently occurs in the modelling of Euler 
buckling~\cite{VenkadesanGuckenheimerValero-Cuevas}, as a re-stabilization mechanism in
paradigmatic models for fluid dynamics~\cite{MorganDawes}, in normal form theory and travelling 
wave dynamics~\cite{KapitulaSandstede,DeisslerBrand}, as well as a test problem for 
deterministic~\cite{KuehnEllipticCont} and stochastic numerical 
continuation~\cite{KuehnSPDEcont}. If we want to allow for time-dependent slowly-varying 
forcing on $u$ and sufficiently regular additive noise, then it is 
actually very natural to extend the model~\eqref{eq:ACcq} 
to 
\be
\label{eq:ACext}
\begin{array}{lcl}
\txtd u_1&=&\left(\Delta u_1 + p_1u_1+u_1^3-u_1^5-u_2\right)~\txtd t+B_1~\txtd W_1,\\
\txtd u_2&=&\varepsilon(p_2 u_2-q_2u_1) ~\txtd t+B_2~\txtd W_2,
\end{array}
\ee
where $p_2$, $q_2$, $0<\varepsilon\ll1 $ are parameters. One easily checks again 
that~\eqref{eq:ACext} fits our general framework as $h(x,u_1)=-p_1u_1-u_1^3+u_1^5$
satisfies the crucial dissipation assumption~\eqref{eqn:condh}. \medskip

\textbf{Acknowledgments:} We thank the anonymous referee for useful comments. CK and AN have been supported by a DFG grant in the D-A-CH framework
(KU 3333/2-1). CK and AP acknowledge support by a Lichtenberg Professorship.

%%%%%%%%%%%%%%%%%%%%%%%%%%%%%%%%%%%%%%%%%%%%%%%%%%%%%%%%%%%%%%%%%%%%%%%%%%%%%%%%%%%%%%%%%%%%%%%%%%%%%%%%%%%%%%%%%%

%%%%%%%%%%%%%%%%%%%%%%%%%%%%%%%%%%%%%%%%%%%%%%%%%%%%%%%%%%%%%%%%%%%%%%%%%%%%%%%%%%%%%%%%%%

\bibliographystyle{abbrv}
\bibliography{Lit}

\end{document}